\theoremstyle{plain}
\newtheorem{lem}{Lemma}[section]
\newtheorem{cor}[lem]{Corollary}
\newtheorem{prop}[lem]{Proposition}
\newtheorem{thm}[lem]{Theorem}
\theoremstyle{definition}
\newtheorem{defn}[lem]{Definition}
\newtheorem{ex}[lem]{Example}
\newtheorem{disc}[lem]{Remark}
\newtheorem{notn}[lem]{Notation}
\newtheorem{fact}[lem]{Fact}
\newcommand{\cat}[1]{\mathcal{#1}}
\newcommand{\catd}{\cat{D}}
\newcommand{\pd}{\operatorname{pd}}
\newcommand{\id}{\operatorname{id}}	
\newcommand{\fd}{\operatorname{fd}}
\newcommand{\mspec}{\operatorname{m-Spec}}
\newcommand{\HH}{\operatorname{H}}
\newcommand{\Hom}{\operatorname{Hom}}	
\newcommand{\spec}{\operatorname{Spec}}
\newcommand{\s}{\mathfrak{S}}
\newcommand{\im}{\operatorname{Im}}
\newcommand{\shift}{\mathsf{\Sigma}}
\newcommand{\ideal}[1]{\mathfrak{#1}}
\newcommand{\m}{\ideal{m}}
\newcommand{\p}{\ideal{p}}
\newcommand{\fa}{\ideal{a}}
\newcommand{\fb}{\ideal{b}}
\newcommand{\comp}[1]{\widehat{#1}}
\newcommand{\ass}{\operatorname{Ass}}
\newcommand{\supp}{\operatorname{supp}}
\newcommand{\VE}{\operatorname{V}}
\newcommand{\bbz}{\mathbb{Z}}
\newcommand{\xra}{\xrightarrow}
\newcommand{\vf}{\varphi}
\renewcommand{\geq}{\geqslant}
\renewcommand{\leq}{\leqslant}
\newcommand{\Ext}[4][R]{\operatorname{Ext}_{#1}^{#2}(#3,#4)}	
\newcommand{\Rhom}[3][R]{\mathbf{R}\!\operatorname{Hom}_{#1}(#2,#3)}	
\newcommand{\Lotimes}[3][R]{#2\otimes^{\mathbf{L}}_{#1}#3}
\newcommand{\Otimes}[3][R]{#2\otimes_{#1}#3}
\renewcommand{\Hom}[3][R]{\operatorname{Hom}_{#1}(#2,#3)}
\newcommand{\LL}[2]{\mathbf{L}\Lambda^{\ideal{#1}}(#2)}
\newcommand{\LLL}[2]{\mathbf{L}\widehat\Lambda^{\ideal{#1}}(#2)}
\newcommand{\RG}[2]{\mathbf{R}\Gamma_{\ideal{#1}}(#2)}
\newcommand{\RRG}[2]{\mathbf{R}\widehat\Gamma_{\ideal{#1}}(#2)}
\newcommand{\Comp}[2]{\widehat{#1}^{\ideal{#2}}}
\newcommand{\catdfb}{\catd_{\text{b}}^{\text{f}}}
\newcommand{\catdb}{\catd_{\text{b}}}
\newcommand{\catdf}{\catd^{\text{f}}}
\newcommand{\RGa}[2]{\mathbf{R}\Gamma_{\mathfrak{#1}\Comp R{#1}}(#2)}
\newcommand{\LLa}[2]{\mathbf{L}\Lambda^{\mathfrak{#1}\Comp R{#1}}(#2)}
\newcommand{\LLno}[1]{\mathbf{L}\Lambda^{\ideal{#1}}}
\newcommand{\LLLno}[1]{\mathbf{L}\widehat\Lambda^{\ideal{#1}}}
\newcommand{\RGno}[1]{\mathbf{R}\Gamma_{\ideal{#1}}}
\newcommand{\RRGno}[1]{\mathbf{R}\widehat\Gamma_{\ideal{#1}}}
\newcommand{\RGano}[1]{\mathbf{R}\Gamma_{\mathfrak{#1}\Comp R{#1}}}
\newcommand{\LLano}[1]{\mathbf{L}\Lambda^{\ideal{#1}\Comp R{#1}}}
\newcommand{\catdator}{\catd(R)_{\text{$\fa$-tor}}}
\newcommand{\catdaator}{\catd(\Comp Ra)_{\text{$\fa\Comp Ra$-tor}}}
\newcommand{\compsa}{\comp S^{\fa S}}
\numberwithin{equation}{lem}
\begin{document}

\bibliographystyle{amsplain}

\author{Sean Sather-Wagstaff}

\address{Department of Mathematical Sciences,
Clemson University,
O-110 Martin Hall, Box 340975, Clemson, S.C. 29634
USA}

\email{ssather@clemson.edu}

\urladdr{https://ssather.people.clemson.edu/}

\thanks{
Sean Sather-Wagstaff was supported in part by a grant from the NSA}

\author{Richard Wicklein}

\address{Richard Wicklein, Mathematics and Physics Department, MacMurray College, 447 East College Ave., Jacksonville, IL 62650, USA}

\email{richard.wicklein@mac.edu}

\title{Adic semidualizing complexes}



\keywords{
Adic finiteness; 
adic semidualizing complexes;
quasi-dualizing modules;
support}
\subjclass[2010]{
13B35, 
13C12, 
13D09, 
13D45
}

\begin{abstract}
We introduce and study a class of objects that encompasses Christensen and Foxby's semidualizing modules and complexes and Kubik's quasi-dualizing modules: the class of $\mathfrak{a}$-adic semidualizing modules and complexes. 
We give examples and equivalent characterizations of these objects, including a characterization in terms of the more familiar 
semidualizing property.
As an application, we give a  proof of the existence of dualizing complexes over complete local rings that does not use the 
Cohen Structure Theorem.
\end{abstract}

\maketitle

\tableofcontents

\section{Introduction} \label{sec130805a}
Throughout this paper let $R$ be a commutative noetherian ring, let $\fa \subsetneq R$ be a proper ideal of $R$, and let $\Comp{R}{a}$ be the $\fa$-adic completion of $R$.
Let $K$ denote the Koszul complex over $R$ on a finite generating sequence for $\fa$.

\

This work is part 5 in a series of papers on derived local cohomology and derived local homology.
It builds on our previous papers~\cite{sather:afbha,sather:afcc,sather:elclh,sather:scc}, and it is applied 
in the paper~\cite{sather:afc}.

Duality is a powerful tool in many areas of mathematics. 
For instance, over a complete  Cohen-Macaulay local ring, Grothendieck's local duality~\cite{hartshorne:lc} uses Matlis duality to relate local cohomology modules
to Ext-modules (i.e., derived dual-modules) with respect to the ring's canonical module. 
When the ring is not Cohen-Macaulay, Grothendieck~\cite{hartshorne:rad} uses a dualizing complex to obtain similar\footnote{or, depending on your perspective, the same} conclusions.
This allows one to study local cohomology by studying Ext, and vice versa, which is incredibly useful.

Because of this and many other applications, dualizing complexes have become important in commutative algebra and algebraic geometry. 
The standard proof of the existence of a dualizing complex for a complete local ring $R$ uses the powerful Cohen Structure Theorem~\cite{cohen:sitclr}:
one surjects onto $R$ with a complete regular local ring $Q$, takes an injective resolution $I$ of $Q$ over itself, and shows that the complex
$\Hom[Q]RI$ is dualizing for $R$. 

One consequence of our work in the current paper is the following alternate construction of a dualizing complex
which avoids the Cohen Structure Theorem; see Theorem~\ref{cor150525axx}\eqref{cor150525axx1} below.

\begin{thm}\label{thm151203a}
Assume that $(R,\m,k)$ is local with $E_R(k)$ the injective hull of $k$ over $R$.
Let $F$ be a flat resolution of $E$ over $R$. 
Then the $\m$-adic completion $\Comp Fm$ is a dualizing complex over $\Comp Rm$.
\end{thm}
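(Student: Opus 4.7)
The plan is to derive Theorem~\ref{thm151203a} as an application of the main characterization of adic dualizing complexes developed in this paper, namely Theorem~\ref{cor150525axx}\eqref{cor150525axx1}. Two ingredients are needed: (i) identify $\Comp{F}{m}$ in $\catd(\Comp{R}{m})$ with the derived $\m$-adic completion $\LL{m}{E}$; and (ii) verify that $E = E_R(k)$ satisfies the hypotheses of that theorem, i.e., is an $\m$-adic (semi)dualizing complex for $R$ in the sense introduced earlier.

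For (i), since $F$ is a bounded-above complex of flat $R$-modules resolving $E$, the termwise $\m$-adic completion $\Comp{F}{m}$ represents the derived completion, giving $\Comp{F}{m} \simeq \LL{m}{F} \simeq \LL{m}{E}$ in $\catd(\Comp{R}{m})$; thus it suffices to prove that $\LL{m}{E}$ is dualizing over $\Comp{R}{m}$. For (ii), the classical Matlis computation yields $\Rhom[R]{E}{E} \simeq \Hom[R]{E}{E} \cong \Comp{R}{m}$ (the first equivalence because $E$ is injective, the second by Matlis), and the homothety morphism $R \to \Rhom[R]{E}{E}$ is the canonical completion map, which is an isomorphism upon further $\m$-adic completion. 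Combined with $E$ being $\m$-torsion (so $\RG{m}{E} \simeq E$) and with $\id_R E = 0$, these properties package $E$ as an $\m$-adic dualizing complex for $R$; applying Theorem~\ref{cor150525axx}\eqref{cor150525axx1} then produces the desired classical dualizing property of $\LL{m}{E} \simeq \Comp{F}{m}$ over $\Comp{R}{m}$.

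The main obstacle I anticipate lies in the verification of the two classical finiteness conditions for $\Comp{F}{m}$ over $\Comp{R}{m}$: finite injective dimension and finitely generated cohomology (whether these are built into the invoked theorem or must be checked directly). For finite injective dimension, rewrite $\LL{m}{E} \simeq \Rhom[R]{\RG{m}{R}}{E}$ via Greenlees--May adjunction and represent $\RG{m}{R}$ by the \v{C}ech complex on a finite generating set of $\m$ (a bounded complex of flat $R$-modules); then $\LL{m}{E}$ is represented by a bounded complex whose terms are of the form $\Hom[R]{R_{x_I}}{E}$, each injective because $R_{x_I}$ is flat and $E$ is injective. For finite generation of cohomology, the Matlis-dual identification $\Rhom[\Comp{R}{m}]{\LL{m}{E}}{E} \simeq \RG{m}{\Comp{R}{m}}$ reduces the question to the well-known artinianness of $H^i_\m(\Comp{R}{m})$ for each $i$, which then passes back under Matlis duality to finite generation over the complete ring $\Comp{R}{m}$.
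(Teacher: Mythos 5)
Your proposal is substantively correct and follows essentially the two routes the paper itself uses. Your first paragraph mirrors the paper's main proof: identify $\Comp Fm\simeq\LLL m E$, observe that $E$ is $\m$-adically semidualizing (Proposition~\ref{prop120925b}) of injective dimension zero, and conclude via the adic-semidualizing framework. One small but genuine wrinkle: you write that one then ``applies Theorem~\ref{cor150525axx}\eqref{cor150525axx1},'' but Theorem~\ref{thm151203a} \emph{is} Theorem~\ref{cor150525axx}\eqref{cor150525axx1} after the identification $\Comp Fm\simeq\LLL m E$, so that citation is circular; the result you should be invoking at that step is Corollary~\ref{cor150525a}\eqref{cor150525a2} (itself resting on Theorem~\ref{thm140204a}), which converts ``$\m$-adically semidualizing of finite injective dimension'' into ``$\LLL mE$ is dualizing over $\Comp Rm$.'' Your second paragraph is a direct argument in the spirit of the paper's alternate proof via local duality; it is fine, with one point to be careful about: you observe that each term $\Hom{R_{x_I}}{E}$ is injective over $R$, but for the injective dimension of $\LLL mE$ over $\Comp Rm$ you need injectivity over $\Comp Rm$, which does hold because these are $\m$-torsion injective $R$-modules (cf.\ Lemma~\ref{lem151008a}). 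The ``Matlis-dual identification'' in your last step is also essentially correct: the homology of $\LLL mE$ is, term by term, the Matlis dual over the complete local ring $\Comp Rm$ of the artinian modules $\HH^i_{\m}(\Comp Rm)$, hence finitely generated.
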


With the power of dualizing complexes in mind, much work has been devoted to the identification of good objects for use in dualities. 
For instance, Foxby~\cite{foxby:gmarm} introduced the ``PG modules of rank 1'' now known as \emph{semidualizing modules};
these are the finitely generated (i.e., noetherian) $R$-modules $C$ such that the natural homothety map $\chi\colon R\to\Hom CC$ given by
$\chi(r)(c):=rc$ is an isomorphism and $\Ext iCC=0$ for all $i\geq 1$.
Canonical modules over Cohen-Macaulay local rings are special cases of these. 
Christensen~\cite{christensen:scatac} extended this to the ``semidualizing complexes'', 
a notion that is flexible enough to encompass both the semidualizing modules as well as the dualizing complexes.
This theory is very useful, capturing not only the dualizing complexes, but also Avramov and Foxby's~\cite{avramov:rhafgd} ``relative dualizing complexes'', 
but it misses other important situations, e.g., Matlis duality.

The work of Kubik~\cite{kubik:qdm} begins to fill this gap by introducing the ``quasi-dualizing modules'' over a local ring $(R,\m,k)$: an artinian $R$-module $T$ 
is $\m$-torsion, so it is a module over  $\Comp Rm$, and $T$
is \emph{quasi-dualizing} if the natural homothety map $\Comp Rm\to\Hom TT$ is an isomorphism, and $\Ext iTT=0$ for all $i\geq 1$.
This includes the injective hull $E_R(k)$, i.e., the base for Matlis duality, as a special case.
However, this does not include the semidualizing objects as special cases, unless the ring is artinian and local, though it does come tantalizingly close,
with the same Ext-vanishing condition, a similar endomorphism algebra isomorphism, and a related finiteness conditions.

The primary goal of this paper is to fill this gap completely by introducing a single notion that recovers all the aforementioned examples as special cases:
that of an ``$\fa$-adic semidualizing complex''.
The general definition is necessarily somewhat technical, building on our papers~\cite{sather:afbha,sather:afcc,sather:elclh, sather:scc}
as well as the established literature on semidualizing complexes; see Definition~\ref{def120925e}. For modules, though, the definition is more straightforward:
an $\fa$-torsion $R$-module $M$ has the structure of a module over $\Comp Ra$, and $M$
is \emph{$\fa$-adically semidualizing} if 
$\Ext{i}{R/\mathfrak{a}}{M}$ is finitely generated for all $i$,  the natural homothety map $\Comp Ra\to\Hom MM$ is an isomorphism, and $\Ext iMM=0$ for all $i\geq 1$.
In particular, the special case $\fa=0$ recovers the semidualizing modules, and the maximal ideal $\fa=\m$ of a local ring yields the quasi-dualizing modules;
see Propositions~\ref{prop130528b} and~\ref{prop120925b}. 

Section~\ref{sec151008a} of this paper is devoted to the foundational properties of $\fa$-adic semidualizing complexes, with the help of
some preparatory lemmas from Section~\ref{sec130818b}. 
The main result of Section~\ref{sec151008a} is Theorem~\ref{lem151005a}, a characterization of the adic semidualizing property.
It shows first that any isomorphism $\Comp Ra\cong\Hom MM$ implies that the homothety map $\Comp Ra\to\Hom MM$ is an isomorphism,
which is somewhat surprising since $\Comp Ra$ and $M$ are not assumed to be finitely generated, a crucial feature of the definition.
Second, it characterizes this property in terms of a semidualizing condition over $\Comp Ra$. 
We state a partial version for modules here for perspective. 

\begin{thm}\label{thm151203b}
Let $M$ be an $R$-module with flat resolution $F$. Then the following conditions are equivalent.
\begin{enumerate}[\rm(i)]
\item
$M$ is $\fa$-adically semidualizing over $R$.
\item
$M$ is $\fa$-torsion, the module
$\Ext{i}{R/\mathfrak{a}}{M}$ is finitely generated for all $i$,  and one has $\Comp Ra\cong\Hom MM$  and $\Ext iMM=0$ for all $i\geq 1$.
\item
$M$ is $\fa$-torsion, and the completed complex $\Comp Fa$ is semidualizing over $\Comp Ra$.
\end{enumerate}
\end{thm}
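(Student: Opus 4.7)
The plan is to prove the cycle (i) $\Rightarrow$ (ii) $\Rightarrow$ (iii) $\Rightarrow$ (i). The direction (i) $\Rightarrow$ (ii) is essentially packaging: an $\fa$-adically semidualizing module is $\fa$-torsion by definition (since it carries a $\Comp Ra$-module structure via $\fa$-torsion), the Ext-finiteness condition is explicit, the homothety isomorphism provides in particular \emph{some} isomorphism $\Comp Ra \cong \Hom MM$, and the Ext-vanishing transfers. So the substance is in (ii) $\Rightarrow$ (iii) $\Rightarrow$ (i), together with the surprising ``upgrade'' from an abstract isomorphism to the homothety being an isomorphism, which I will handle in (ii) $\Rightarrow$ (iii).

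For (ii) $\Rightarrow$ (iii), the key move is to identify $\Comp Fa$ with a derived completion $\LLno{a}(M)$ (or rather $\LLano{a}(M)$, using the tower of ideals and the noetherian hypothesis). Since $M$ is $\fa$-torsion with $\Ext{i}{R/\fa}{M}$ finitely generated for all $i$, the adic finiteness machinery developed in the prior papers of the series (notably~\cite{sather:afbha,sather:afcc,sather:elclh,sather:scc}) forces $\Comp Fa \in \catdfb(\Comp Ra)$; moreover $\HH_0(\Comp Fa) \cong M$ and the higher homology vanishes, because $M$ is $\fa$-torsion (hence already $\fa$-adically complete in an appropriate derived sense). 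Standard change-of-rings for $\Rhom$ under completion then yields $\Rhom[\Comp Ra]{\Comp Fa}{\Comp Fa} \simeq \Rhom{F}{M}$ in $\catd(\Comp Ra)$, so the homothety $\Comp Ra \to \Rhom[\Comp Ra]{\Comp Fa}{\Comp Fa}$ has zeroth cohomology the map $\Comp Ra \to \Hom MM$ and higher cohomology $\Ext iMM$. The higher Ext vanishes by hypothesis, so only the degree-zero iso is at issue. Here I use the ``upgrade'' argument: fix any $\Comp Ra$-module isomorphism $\Comp Ra \xrightarrow{\cong} \Hom MM$, let $\alpha \in \Hom MM$ be the image of $1$, and write $\id_M = r\alpha = \chi(r) \circ \alpha$ for some $r \in \Comp Ra$; since $\alpha$ is a free generator this forces $\chi(r)$ to be an automorphism of $M$ with inverse $\alpha$, and then $\Hom MM = \Comp Ra \cdot \alpha = \chi(\Comp Ra) \circ \chi(r)^{-1} = \chi(\Comp Ra)$, giving surjectivity of $\chi$. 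Injectivity follows because $\Comp Ra$ is noetherian: a surjection $\Comp Ra \onto \Hom MM \cong \Comp Ra$ of $\Comp Ra$-modules has zero kernel (the annihilator of the image of $1$ in any cyclic module isomorphic to $\Comp Ra$ is zero). So the homothety is an iso, and $\Comp Fa$ is semidualizing over $\Comp Ra$.

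For (iii) $\Rightarrow$ (i), reverse-engineer the above. From semidualizing-ness of $\Comp Fa$ over $\Comp Ra$, the Ext-vanishing $\Ext iMM = 0$ for $i \geq 1$ and the homothety isomorphism $\Comp Ra \cong \Hom MM$ pop out by taking cohomology. The finiteness of $\Ext{i}{R/\fa}{M}$ follows from $\Comp Fa \in \catdfb(\Comp Ra)$ by rewriting $\Rhom{R/\fa}{M} \simeq \Rhom[\Comp Ra]{R/\fa}{\Comp Fa}$ (again using that $M$ is $\fa$-torsion) and applying standard finiteness results.

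The main obstacle will be the technical bookkeeping around $\Comp Fa$: verifying that completion of a flat resolution genuinely computes $\LLano{a}(M)$, that homology lands in degree zero and equals $M$, and that $\Rhom$ commutes with completion in the required way, all under the sole finiteness hypothesis on $\Ext{i}{R/\fa}{M}$. These facts should all be reducible to lemmas from Section~\ref{sec130818b} and the earlier papers in the series, but locating and applying the right statements — particularly ensuring $\Comp Fa$ has finitely generated homology over $\Comp Ra$ — is where the work lies. The abstract-iso-to-homothety upgrade is philosophically the most surprising step, as emphasized in the introduction, but it is a short algebraic argument once the noetherian property of $\Comp Ra$ is in hand.
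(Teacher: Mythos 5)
Your broad strategy — cycling (i)$\Rightarrow$(ii)$\Rightarrow$(iii)$\Rightarrow$(i), reducing the question to whether $\Comp Fa$ is semidualizing over $\Comp Ra$, and isolating the ``upgrade'' from an abstract isomorphism to a homothety isomorphism as the crux — is the same as the paper's. The paper proves the more general Theorem~\ref{lem151005a} about complexes, with Lemma~\ref{lem151004a} doing the technical heavy lifting and \cite[Proposition~3.1]{avramov:rrc1} supplying the upgrade wholesale. Your version has three concrete problems.

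First, the claim that $M$ being $\fa$-torsion forces $\HH_0(\Comp Fa)\cong M$ with vanishing higher homology (because $M$ is ``already $\fa$-adically complete in a derived sense'') is false: cohomologically $\fa$-torsion and cohomologically $\fa$-complete are \emph{different} subcategories, exchanged (not fixed) by MGM equivalence. For $R$ local and not Cohen--Macaulay, $M=E_R(k)$ is $\m$-torsion but $\Comp Fm=\LLL mM$ is a dualizing complex for $\Comp Rm$ by Theorem~\ref{cor150525axx}\eqref{cor150525axx1}, hence has homology in several degrees. (This particular error turns out to be unused downstream, but it signals a misconception.)

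Second, the asserted change-of-rings isomorphism $\Rhom[\Comp Ra]{\Comp Fa}{\Comp Fa}\simeq\Rhom FM$ \emph{in $\catd(\Comp Ra)$} is exactly the delicate point the paper flags before Lemma~\ref{lem151004a}: part~\eqref{lem151004a1} of that lemma only yields an isomorphism in $\catd(R)$, and parts~\eqref{lem151004a2}--\eqref{lem151004a3} require a genuine argument (Hom-tensor adjointness, MGM equivalence, Fact~\ref{fact130619b}, and a diagram chase using \cite[Theorem~$(0.3)^*$]{lipman:lhcs}) to transport the statement to $\catd(\Comp Ra)$. You assert it as ``standard change-of-rings''; it is not, and the homothety over $\Comp Ra$ is not obviously identified with the map $\Comp Ra\to\Hom MM$ without this work.

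Third, your upgrade argument has a gap in the surjectivity step. From $\id_M=r_0\alpha=\chi(r_0)\circ\alpha$ you correctly conclude that $\chi(r_0)$ and $\alpha$ are mutually inverse automorphisms of $M$. But the step $\Hom MM=\chi(\Comp Ra)\circ\chi(r_0)^{-1}=\chi(\Comp Ra)$ requires $\chi(r_0)^{-1}\in\chi(\Comp Ra)$, i.e., that $r_0$ is a unit in $\Comp Ra$ — and $\chi(r_0)$ being an automorphism of $M$ does not by itself imply $r_0$ is a unit (e.g., when $\ker\chi\neq 0$). The conclusion is true, but it needs the ring structure of $\Hom MM$: since $\Hom MM=\Comp Ra\cdot\alpha$, write $\alpha^2=c\alpha$ for some $c\in\Comp Ra$; then $\alpha=\id_M\cdot\alpha=(r_0\alpha)\alpha=r_0c\alpha$, so $(1-r_0c)\alpha=0$, and since $\ann(\alpha)=0$ we get $r_0c=1$. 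Thus $r_0$ is a unit, $\alpha=\chi(c)\in\chi(\Comp Ra)$, and surjectivity follows. Your injectivity argument (a surjective endomorphism of a noetherian module is injective) is fine once surjectivity is in place. The paper sidesteps all of this by citing \cite[Proposition~3.1]{avramov:rrc1}, which handles the analogous statement at the complex level.
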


The bulk of this paper is Section~\ref{sec150527a}, which is devoted to describing the connections between various flavors of semidualizing objects,
though one can already see hints of this in Theorem~\ref{thm151203b}. As a sample, the next result contains parts of Theorems~\ref{thm151012a} 
and~\ref{thm140204a}; see Remark~\ref{disc151015a} for a diagrammatic representation of this and more. 

\begin{thm}\label{thm151203c}
The following sets are in natural bijection:
\begin{enumerate}[\rm(a)]
\item the set of shift-isomorphism classes of $\fa$-adic semidualizing $R$-complexes,
\item the set of shift-isomorphism classes of semidualizing $\Comp Ra$-complexes, and
\item the set of shift-isomorphism classes of $\fa\Comp Ra$-adic semidualizing $\Comp Ra$-complexes.
\end{enumerate}
\end{thm}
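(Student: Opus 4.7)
The plan is to derive all three bijections from Theorem~\ref{lem151005a}, the full-complex version of Theorem~\ref{thm151203b}. For the bijection (a)$\leftrightarrow$(b), the forward map sends the shift-isomorphism class $[M]$ to the class $[\Comp Fa]$, where $F$ is any semi-flat resolution of $M$ over $R$; the implication (i)$\Rightarrow$(iii) of Theorem~\ref{thm151203b} guarantees that $\Comp Fa$ is semidualizing over $\Comp Ra$, and a standard comparison of flat resolutions combined with the fact that $\fa$-adic completion preserves quasi-isomorphisms of suitable flat complexes shows that the isomorphism class of $\Comp Fa$ in $\catd(\Comp Ra)$ depends only on the shift-isomorphism class of $M$ in $\catd(R)$.

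For the inverse direction (b)$\to$(a), given a semidualizing $\Comp Ra$-complex $S$, I would send it to $\RGno{a} S$, viewed as an object of $\catd(R)$ by restriction of scalars along the canonical flat map $R \to \Comp Ra$. Using the MGM-style equivalence between derived $\fa$-torsion and derived $\fa$-complete objects established in~\cite{sather:afbha,sather:afcc,sather:elclh,sather:scc}, one checks that $\RGno{a} S$ is $\fa$-torsion and that the $\fa$-adic completion of a flat resolution of $\RGno{a} S$ recovers $S$ in $\catd(\Comp Ra)$; combined with the implication (iii)$\Rightarrow$(i) of Theorem~\ref{thm151203b}, this promotes $\RGno{a} S$ to an $\fa$-adic semidualizing $R$-complex. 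Conversely, starting from an $\fa$-adic semidualizing $M$, the torsion-complete equivalence yields $M \simeq \RGno{a}(\Comp Fa)$ in $\catd(R)$, so the two assignments are mutually inverse on shift-isomorphism classes.

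The bijection (b)$\leftrightarrow$(c) is then obtained by applying Theorem~\ref{lem151005a} to the ring $\Comp Ra$ equipped with the ideal $\fa\Comp Ra$. Since $\Comp Ra$ is $\fa\Comp Ra$-adically complete, the canonical map $\Comp Ra \to \widehat{\Comp Ra}^{\fa\Comp Ra}$ is an isomorphism, so the equivalence (i)$\Leftrightarrow$(iii) in this setting sends an $\fa\Comp Ra$-adic semidualizing $\Comp Ra$-complex $N$ with flat $\Comp Ra$-resolution $G$ to the semidualizing $\Comp Ra$-complex $\widehat{G}^{\fa\Comp Ra}$, and has inverse given by derived $\fa\Comp Ra$-torsion applied over $\Comp Ra$.

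The main obstacle will be verifying that $S \mapsto \RGno{a} S$ in the (a)$\leftrightarrow$(b) correspondence lands in the class of $\fa$-adic semidualizing $R$-complexes and that this is a genuine inverse on shift-isomorphism classes. This requires carefully combining the adic finiteness technology of the earlier papers~\cite{sather:afbha,sather:afcc,sather:elclh,sather:scc} with the finiteness of $\Ext{i}{R/\fa}{M}$ built into the definition of the $\fa$-adic semidualizing property, and using naturality of the MGM equivalence to transfer the homothety isomorphism and the Ext-vanishing between $M$ and $\Comp Fa$.
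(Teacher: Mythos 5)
Your proposal is essentially the same approach as the paper: both rely on the characterization in Theorem~\ref{lem151005a} together with the torsion--completion (MGM) equivalence, realizing the bijection (a)$\leftrightarrow$(b) by $\LLLno a$ with inverse restriction of scalars composed with derived torsion. Where you differ slightly is in the decomposition of the three-way bijection: the paper proves (a)$\leftrightarrow$(b) (Theorem~\ref{thm140204a}) and (a)$\leftrightarrow$(c) (Theorem~\ref{thm151012a}) as separate results, the latter via the extended derived local cohomology functor $\RRGno a$ and the forgetful functor $Q$, while you instead obtain (b)$\leftrightarrow$(c) by applying the (a)$\leftrightarrow$(b) machinery to the already-complete ring $\Comp Ra$ with ideal $\fa\Comp Ra$. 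Your variant is legitimate and is in fact recorded by the paper as Corollary~\ref{thm140204az}; what it buys is that you only need to set up the completion/torsion bijection once and apply it twice, at the cost of obtaining (a)$\leftrightarrow$(c) only by composition rather than by a direct restriction-of-scalars argument. Be aware, though, that the ``main obstacle'' you flag --- showing $\RGno a$ of a semidualizing $\Comp Ra$-complex is $\fa$-adically semidualizing and that the assignments are mutually inverse on shift-isomorphism classes --- is exactly where the paper invokes Corollary~\ref{thm130330b}, Proposition~\ref{prop151011a}, and the equivalence~\cite[Theorem~6.3(b)]{sather:elclh}; in a complete proof you would need to supply (or cite) the same facts rather than appeal to MGM equivalence at the level of heuristics.
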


Another result worth mentioning from this section is Theorem~\ref{cor151122a}, which states that the adic semidualizing property is local.

The concluding Section~\ref{sec151204a} contains Theorem~\ref{thm151203a} and other results about dualizing complexes.
It also includes characterizations of the adic semidualizing complexes in the case where they should all be trivial in some sense: when $R$ is Gorenstein.

While we have phrased much of this introduction in terms of modules, the bulk of this paper deals with the more general situation of chain complexes.
Specifically, we work primarily in the derived category.
Section~\ref{sec140109b} below contains some background material on this topic.

This work is largely inspired by the papers mentioned above, especially those of Christensen and Foxby~\cite{christensen:scatac,foxby:gmarm}.
We explore other $\fa$-adic aspects of these works in our subsequent paper~\cite{sather:afc}.

\section{Background}\label{sec140109b} 

\subsection*{Derived Categories}
We work in the derived category $\catd(R)$ with objects  the $R$-complexes
indexed homologically
$X=\cdots\to X_i\to X_{i-1}\to\cdots$;
see~\cite{hartshorne:rad,verdier:cd,verdier:1}.
Isomorphisms in $\catd(R)$ are identified by the symbol $\simeq$.
The $n$th shift (or suspension) of $X$ is denoted $\shift^nX$.
We also consider the next full triangulated subcategories:
\begin{align*}
\catd_+(R)
&\text{: objects are homologically bounded below $R$-complexes}\\
\catd_-(R)
&\text{: objects are homologically bounded above $R$-complexes}\\
\catdb(R)
&\text{: objects are homologically bounded  $R$-complexes}\\
\catdf(R)
&\text{: objects are homologically degree-wise finite $R$-complexes}
\end{align*}
Intersections of these categories are designated with two ornaments, e.g., $\catdfb(R)=\catdb(R)\bigcap\catdf(R)$.

\subsection*{Resolutions}
An $R$-complex $F$ is \emph{semi-flat}\footnote{In the literature, semi-flat complexes are sometimes called ``K-flat'' or ``DG-flat''.} 
if the functor $\Otimes -F$ respects injective quasiisomorphisms, that is, if each module $F_i$ is flat over $R$
and the functor $\Otimes -F$ respects quasiisomorphisms.
A \emph{semi-flat resolution} of an $R$-complex $X$ is a quasiisomorphism $F\xra\simeq X$ such that $F$ is semi-flat;
for $X\in\catdb(R)$, the \emph{flat dimension} $\fd_R(X)$ is the length of the shortest bounded semi-flat resolution of $X$,
if one exists:
$$\fd_R(X):=\inf\{\sup\{i\in\bbz\mid F_i\neq 0\}\mid \text{$F\xra\simeq X$ is a semi-flat resolution}\}.$$
The \emph{injective} and \emph{projective} versions of these notions are defined similarly.

For the following items, consult~\cite[Section 1]{avramov:hdouc} or~\cite[Chapters 3 and~5]{avramov:dgha}.
Bounded below complexes of flat $R$-modules are semi-flat, and 
bounded above complexes of injective $R$-modules are semi-injective. 
Every $R$-complex admits a semi-flat resolution (hence, a semi-projective one) and a semi-injective resolution.

\subsection*{Derived Functors}
The right derived functor of Hom is $\Rhom --$, which is computed via a semi-projective resolution in the first slot
or a semi-injective resolution in the second slot. 
The left derived functor of tensor product is $\Lotimes --$, which is computed via a semi-flat resolution in either slot.

Local cohomology and local homology, described next, play a major role in this work.
These notions go back to Grothendieck~\cite{hartshorne:lc}, and Matlis~\cite{matlis:kcd,matlis:hps}, respectively;
see also~\cite{lipman:lhcs,lipman:llcd}.
Let $\Lambda^{\fa}(-)$ denote the $\fa$-adic completion functor, and
$\Gamma_{\fa}(-)$ is the $\fa$-torsion functor, i.e.,
for an $R$-module $M$ we have
$$\Lambda^{\fa}(M)=\Comp Ma
\qquad
\qquad
\qquad
\Gamma_{\fa}(M)=\{ x \in M \mid \fa^{n}x=0 \text{ for } n \gg 0\}.$$ 
A module $M$ is \textit{$\fa$-torsion} if $\Gamma_{\fa}(M)=M$.

The associated left and right derived functors (i.e., \emph{derived local homology and cohomology} functors)
are  $\LL a-$ and $\RG a-$.
Specifically, given an $R$-complex $X\in\catd(R)$ with a semi-flat resolution $F\xra\simeq X$ and a 
semi-injective resolution $X\xra\simeq I$,  we have $\LL aX\simeq\Lambda^{\fa}(F)$ and $\RG aX\simeq\Gamma_{\fa}(I)$.
Note that these definitions yield natural transformations $\RGno a\to\id\to \LLno a$, induced by the natural morphisms
$\Gamma_{\fa}(I)\to I$ and $F\to \Lambda^{\fa}(F)$.
Let $\catdator$ denote the full subcategory of $\catd(R)$ of all complexes $X$ such that the morphism
$\RG aX\to X$ is an isomorphism.

The definitions of $\RG aX$ and $\LL aX$ yield complexes over the completion $\Comp Ra$, and we denote by
$\LLLno a$ and $\RRGno a$ the associated functors $\catd(R)\to\catd(\Comp Ra)$.

\begin{fact}\label{fact130619b}
If $Q\colon \catd(\Comp Ra)\to\catd(R)$ is the forgetful functor, then it follows readily that
$Q\circ\LLLno a\simeq\LLno a$ and $Q\circ\RRGno a\simeq\RGno a$.
If $X\in\catdf_+(R)$, then there is a natural isomorphism
$\LL aX\simeq \Lotimes{\Comp Ra}{X}$ by~\cite[Proposition 2.7]{frankild:volh}.
Moreover, the proof of this result shows that there is a natural isomorphism
$\LLL aX\simeq \Lotimes{\Comp Ra}{X}$ in $\catd(\Comp Ra)$.\footnote{This 
is based on the fact that, for a finitely generated free $R$-module $L$, induction on the rank of $L$
shows that the natural isomorphism
$\Otimes{\Comp Ra}{L}\cong\Comp La$ is $\Comp Ra$-linear.} 
From~\cite[Theorem~(0.3) and Corollary~(3.2.5.i)]{lipman:lhcs}, there are natural isomorphisms
\begin{align*}
\RG a-\simeq\Lotimes{\RG aR}{-}&&
\LL a-\simeq\Rhom{\RG aR}{-}.
\end{align*}
More generally, by~\cite[Theorems~3.2 and~3.6]{shaul:hccac} we have
\begin{align*}
\RRG a-\simeq\Lotimes{\RRG aR}{-}&&
\LLL a-\simeq\Rhom{\RRG aR}{-}.
\end{align*}
\end{fact}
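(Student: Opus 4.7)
The plan is to verify each assertion in the Fact by combining a direct computation at the level of resolutions with the cited results. The key organizing observation is that when $\LL aX$ (respectively $\RG aX$) is computed via a semi-flat resolution $F\res X$ (respectively a semi-injective resolution $X\res I$), the resulting complex $\Lambda^{\fa}(F)$ (respectively $\Gamma_{\fa}(I)$) naturally carries a $\Comp Ra$-module structure coming from the $\fa$-adic topology. The functors $\LLLno a$ and $\RRGno a$ landing in $\catd(\Comp Ra)$ are defined precisely by recording this extra structure, so the first pair $Q\circ\LLLno a\simeq\LLno a$ and $Q\circ\RRGno a\simeq\RGno a$ is immediate by applying the forgetful functor $Q$.

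For the second assertion, I would simply invoke~\cite[Proposition~2.7]{frankild:volh}. To upgrade this to the $\Comp Ra$-linear isomorphism $\LLL aX\simeq\Lotimes{\Comp Ra}{X}$ of the third assertion, I would follow the hint in the footnote: it suffices to check that, for a finitely generated free $R$-module $L$, the natural identification $\Otimes{\Comp Ra}{L}\cong\Comp La$ is $\Comp Ra$-linear. This is trivially true for $L=R$, and the general case follows by induction on the rank of $L$ since both functors commute with finite direct sums. Passing to a semi-flat resolution $F\res X$ by finite free modules in each degree---available because $X\in\catdf_+(R)$---then transfers the $\Comp Ra$-linearity from each $F_i$ to the whole complex.

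The fourth and fifth assertions are essentially direct citations: the isomorphisms $\RG a{-}\simeq\Lotimes{\RG aR}{-}$ and $\LL a{-}\simeq\Rhom{\RG aR}{-}$ are~\cite[Theorem~(0.3) and Corollary~(3.2.5.i)]{lipman:lhcs}, and their $\Comp Ra$-linear refinements for $\RRG a{-}$ and $\LLL a{-}$ are~\cite[Theorems~3.2 and~3.6]{shaul:hccac}. No additional construction is needed beyond recording these references.

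The only point that requires any genuine work is the $\Comp Ra$-linearity claim in the third assertion, and even there the argument reduces to a bookkeeping check that the standard identification on finite free modules respects the completion-induced scalar action. Everything else is either immediate from the definitions of the derived functors or a direct appeal to the literature, so there is no substantial obstacle to assembling the Fact in the stated form.
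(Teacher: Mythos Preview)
Your proposal is correct and matches the paper's treatment: this is stated as a \emph{Fact} with no separate proof, the justifications being the embedded citations and the footnote sketch about $\Comp Ra$-linearity on finite free modules. Your elaboration simply unpacks what the paper leaves implicit in the statement itself.
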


Here is an important feature of these constructions, sometimes called MGM equivalence (after Matlis, Greenlees, and May).

\begin{fact}\label{fact150626a}
From~\cite[Corollary to Theorem (0.3)$^*$]{lipman:lhcs}
and~\cite[Theorem~1.2]{yekutieli:hct} the following natural morphisms 
are isomorphisms:
\begin{align*}
\RGno a\circ\id\xra\simeq\RGno a\circ\LLno a
&&\LLno a\circ\RGno a\xra\simeq\LLno a\circ\id \\
\RGno a\circ\RGno a\xra\simeq\id\circ\RGno a
&&\id\circ\LLno a\xra\simeq\LLno a\circ\LLno a.
\end{align*}
\end{fact}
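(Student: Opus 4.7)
The plan is to use the Čech complex $\cech{\underline{x}}$ on a finite generating sequence $\underline{x}=x_1,\dots,x_r$ for $\fa$ as an explicit flat model of $\RG{a}{R}$, so that Fact~\ref{fact130619b} provides functorial isomorphisms $\RG{a}{X}\simeq\Lotimes{\cech{\underline{x}}}{X}$ and $\LL{a}{X}\simeq\Rhom{\cech{\underline{x}}}{X}$ for every $X\in\catd(R)$.

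The key input is the idempotence lemma $\Lotimes{\cech{\underline{x}}}{\cech{\underline{x}}}\simeq\cech{\underline{x}}$. Writing $\cech{\underline{x}}\cong\cech{x_1}\otimes_R\cdots\otimes_R\cech{x_r}$ with $\cech{x_i}=(R\to R[x_i^{-1}])$, this reduces to the single-element statement $\Lotimes{\cech{x_i}}{\cech{x_i}}\simeq\cech{x_i}$, an explicit calculation using flatness and idempotence of localization at a single element. Equivalently, with $C$ the cone of the augmentation $\varepsilon\colon\cech{\underline{x}}\to R$, one has $\Lotimes{\cech{\underline{x}}}{C}\simeq 0$.

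Granted the lemma, the ``straight'' isomorphisms $\RGno{a}\RGno{a}\simeq\RGno{a}$ and $\LLno{a}\simeq\LLno{a}\LLno{a}$ are essentially formal: associativity of the derived tensor product gives the first, and Hom-Tensor adjunction applied to $\Lotimes{\cech{\underline{x}}}{\cech{\underline{x}}}\simeq\cech{\underline{x}}$ gives the second. For the ``mixed'' isomorphisms, use the triangle induced by $\varepsilon$: tensoring with $X$ shows the cone of $\RG{a}{X}\to X$ is $\Lotimes{C}{X}$, while applying $\Rhom{-}{X}$ shows the cone of $X\to\LL{a}{X}$ is (up to shift) $\Rhom{C}{X}$. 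The identity $\RGno{a}X\simeq\RGno{a}\LLno{a}X$ then reduces to the vanishing $\Lotimes{\cech{\underline{x}}}{\Rhom{C}{X}}\simeq 0$, and dually $\LLno{a}\RGno{a}X\simeq\LLno{a}X$ reduces to $\Rhom{\cech{\underline{x}}}{\Lotimes{C}{X}}\simeq 0$.

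The main obstacle is these two vanishings, which do not follow directly from $\Lotimes{\cech{\underline{x}}}{C}\simeq 0$ but require propagating that vanishing through the functors $\Rhom{C}{-}$ and $\Rhom{\cech{\underline{x}}}{-}$. The cleanest route uses that $C$ is a bounded complex whose constituents are localizations $R[g^{-1}]$ with $g$ a nontrivial product of the generators $x_i$: on each such $R[g^{-1}]$ the ideal $\fa$ acts invertibly, forcing both $\RGno{a}R[g^{-1}]\simeq 0$ and $\LLno{a}R[g^{-1}]\simeq 0$. Bounded-complex spectral sequence arguments then propagate these term-by-term vanishings to $C$ and to the required composites. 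Together, the four isomorphisms express the MGM equivalence between the subcategories $\catdator$ and $\catdacomp$ of $\catd(R)$, realized by the mutually quasi-inverse functors $\RGno{a}$ and $\LLno{a}$.
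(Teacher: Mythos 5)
The paper presents this statement as a \emph{Fact} cited to Alonso Tarr\'io--Jerem\'ias L\'opez--Lipman \cite{lipman:lhcs} and Porta--Shaul--Yekutieli \cite{yekutieli:hct} and gives no proof of its own, so there is no in-paper argument to compare with; your proposal is in effect a reconstruction of the argument from the cited sources. Your reconstruction follows the standard \v{C}ech-complex route, and the three main ingredients -- the idempotence $\Lotimes{\cech{\x}}{\cech{\x}}\simeq\cech{\x}$ (proved factor by factor), the triangle built from the cone $C$ of the augmentation $\cech{\x}\to R$, and the observation that each term of $C$ is a finite sum of localizations $R[g^{-1}]$ on which $\fa$ becomes the unit ideal -- are exactly right. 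Two points deserve more care than the sketch gives them. First, $\cech{\x}$ is a bounded complex of flat, not projective, modules, so $\Rhom{\cech{\x}}{-}$ is not computed by the literal $\Hom{\cech{\x}}{-}$; one should replace $\cech{\x}$ by the quasi-isomorphic telescope complex $T$, a bounded complex of free modules, before writing down the explicit double and triple complexes. Second, the ``propagation'' step -- deducing $\RG a{\Rhom CX}\simeq 0$ and $\LL a{(\Lotimes CX)}\simeq 0$ from vanishing on each $R[g^{-1}]$ -- is the real content, and the phrase ``bounded-complex spectral sequence arguments'' is hiding the argument rather than giving it. What is actually needed is: represent $\Rhom CX$ (resp.\ $\Lotimes CX$) by a complex whose constituents are $R[g^{-1}]$-modules; observe that $\cech{\x}\otimes_R N$ (resp.\ $\Hom{T}{N}$) is contractible for every $R[g^{-1}]$-module $N$ because $\cech{\x}\otimes_R R[g^{-1}]$ (resp.\ $T\otimes_R R[g^{-1}]$) is a bounded acyclic complex of flat (resp.\ projective) $R[g^{-1}]$-modules; and then assemble these termwise acyclicities through a triple complex that is bounded in the two directions coming from $\cech{\x}$ (or $T$) and from $C$, which is precisely what makes the spectral sequence converge even when the chosen resolution of $X$ is unbounded. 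Once these are spelled out the argument is correct, and the four isomorphisms together realize the Greenlees--May (MGM) equivalence between $\catdator$ and $\catdacomp$ mentioned at the end of your sketch.
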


The following notion of support is due to Foxby~\cite{foxby:bcfm}.

\begin{defn}\label{defn130503a}
Let $X\in\catd(R)$.
The \emph{small support} of $X$ is
\begin{align*}
\operatorname{supp}_R(X)
&=\{\mathfrak{p} \in \operatorname{Spec}(R)\mid \Lotimes{\kappa(\p)}X\not\simeq 0 \} 
\end{align*}
where $\kappa(\p):=R_\p/\p R_\p$.
\end{defn}

\begin{fact}\label{cor130528a}
Let $X\in\catd(R)$. Then 
we know that $\supp_R(X)\subseteq\VE(\fa)$ if and only if $X\in\catdator$ if and only if each homology module $\HH_i(X)$ is $\fa$-torsion,
by~\cite[Proposition~5.4]{sather:scc}
and~\cite[Corollary~4.32]{yekutieli:hct}.
\end{fact}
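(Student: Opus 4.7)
The plan is to establish the chain of equivalences (a) $\iff$ (b) $\iff$ (c) by proving (c) $\iff$ (b) and (b) $\iff$ (a) separately, using the MGM equivalence (Fact~\ref{fact150626a}) together with the computational identities from Fact~\ref{fact130619b} as the main tools.

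For (b) $\iff$ (c), I would pass through a semi-injective resolution $X \xra\simeq I$. The implication (b) $\Longrightarrow$ (c) is the easier direction: $\RG aX \simeq \Gamma_\fa(I)$, and each $\Gamma_\fa(I_n)$ is $\fa$-torsion, so each homology module is a subquotient of an $\fa$-torsion module and hence $\fa$-torsion. For (c) $\Longrightarrow$ (b), I would use the structure theory of injectives over a noetherian ring: each $I_n$ splits canonically as $\Gamma_\fa(I_n) \oplus J_n$, where $J_n$ is the part built from injective hulls $E_R(R/\p)$ with $\p \not\supseteq \fa$. The splittings are compatible with the differentials, so $J := I/\Gamma_\fa(I)$ is a complex of $\fa$-torsion-free injectives. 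A direct computation shows that its homology modules are both $\fa$-torsion (being built from the $\fa$-torsion $\HH(I)$) and $\fa$-torsion-free (being submodules of $\fa$-torsion-free injectives), hence zero; thus $\Gamma_\fa(I) \to I$ is a quasi-isomorphism.

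For (b) $\iff$ (a), I would exploit the identification $\RG aX \simeq \Lotimes{\RG aR}{X}$. For (b) $\Longrightarrow$ (a), given $\p \notin \VE(\fa)$, the ideal $\fa R_\p$ equals $R_\p$, so $\RG a$ annihilates every $R_\p$-complex; hence $\Lotimes{\kappa(\p)}{X} \simeq \Lotimes{\kappa(\p)}{\RG aX} \simeq \RG{a}{\Lotimes{\kappa(\p)}{X}} \simeq 0$. For (a) $\Longrightarrow$ (b), form the cofiber triangle $\RG aX \to X \to C \to \shift\RG aX$ and apply $\RG a$; MGM equivalence forces $\RG aC \simeq 0$. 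Combining the general identity $\supp_R(\RG aY) = \supp_R(Y) \cap \VE(\fa)$ with the triangle inclusion $\supp_R(C) \subseteq \supp_R(X) \cup \supp_R(\RG aX) \subseteq \VE(\fa)$, one sees that $\supp_R(C) = \supp_R(C) \cap \VE(\fa) = \supp_R(\RG aC) = \emptyset$, and this forces $C \simeq 0$.

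The main obstacle is the last step: both the support formula $\supp_R(\RG aY) = \supp_R(Y) \cap \VE(\fa)$ and the detection principle that empty small support implies $C \simeq 0$ in $\catd(R)$ are the substantive inputs, and these are precisely what is imported from~\cite[Proposition~5.4]{sather:scc} and~\cite[Corollary~4.32]{yekutieli:hct}. A self-contained treatment would require building out enough of the theory of small support on unbounded complexes—in particular, its behavior under localization and under $\RG a$—to establish these as preliminary lemmas.
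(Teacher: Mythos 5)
The paper gives no internal proof of this Fact---it defers entirely to the two cited references---so there is no argument of the paper's to compare against, and I evaluate your proposal on its own.

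Your implications (b)~$\Rightarrow$~(c) and (b)~$\Rightarrow$~(a) are fine, and your organization of (a)~$\Rightarrow$~(b) via the support formula and the detection principle is a legitimate packaging of the cited inputs. The genuine gap is in (c)~$\Rightarrow$~(b). First a minor slip: the termwise decomposition $I_n=\Gamma_\fa(I_n)\oplus J_n$ is \emph{not} compatible with the differential in both directions---only $\Gamma_\fa(I)$ is a subcomplex, while $J$ is a quotient complex---though the conclusion that $J:=I/\Gamma_\fa(I)$ is a complex of $\fa$-torsion-free injectives does still hold. The real problem is the assertion that $\HH_n(J)$ is $\fa$-torsion-free ``being submodules of $\fa$-torsion-free injectives.'' Homology is a \emph{subquotient} of the terms, not a submodule, and quotients of torsion-free modules can carry arbitrary torsion. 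Concretely, take $R=k[x,y]_{(x,y)}$ and $\fa=\m$, and let $J$ be the two-term complex formed by the first two modules of the minimal injective resolution of $R$, namely $E_R(R)\to\bigoplus_{\Ht\p=1}E_R(R/\p)$. Each term is an $\m$-torsion-free injective, yet $\HH_0(J)\cong E_R(k)$ is entirely $\m$-torsion. So the claim ``$\HH(J)$ is both torsion and torsion-free, hence zero'' does not hold, and (c)~$\Rightarrow$~(b) is not established.

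The correct route for (c)~$\Rightarrow$~(b) does not come from the structure theory of injectives alone; it exploits the identification $\RG a-\simeq\Lotimes{\RG aR}{-}$ together with the fact that $\RG aR$ is represented by the \emph{bounded} \v{C}ech complex on a finite generating sequence of $\fa$. One forms the triangle $\RG aX\to X\to C\to$, notes via MGM that $\RG aC\simeq 0$ and via the long exact sequence that $\HH(C)$ is $\fa$-torsion, and then uses the finite filtration of the \v{C}ech complex to reduce to the fact that inverting any generator $a_i$ of $\fa$ annihilates an $\fa$-torsion module; this forces $C\simeq 0$. That finite-filtration reduction---equivalently, the weak proregularity of $\fa$ in a noetherian ring---is precisely what makes the equivalence non-trivial and is the substance of the Porta--Shaul--Yekutieli result the paper cites.
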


The next fact and definition take their cues from work of 
Hartshorne~\cite{hartshorne:adc},
Kawasaki~\cite{kawasaki:ccma,kawasaki:ccc}, and
Melkersson~\cite{melkersson:mci}.

\begin{fact}[\protect{\cite[Theorem~1.3]{sather:scc}}]
\label{thm130612a}
For $X\in\catd_{\text b}(R)$, the next conditions are equivalent.
\begin{enumerate}[\rm(i)]
\item\label{cor130612a1}
One has $\Lotimes{K^R(\underline{y})}{X}\in\catdfb(R)$  for some (equivalently for every) generating sequence $\underline{y}$ of $\fa$.
\item\label{cor130612a2}
One has  $\Lotimes{X}{R/\mathfrak{a}}\in\catd^{\text{f}}(R)$.
\item\label{cor130612a3}
One has  $\Rhom{R/\mathfrak{a}}{X}\in\catd^{\text{f}}(R)$.
\item\label{cor130612a4}
One has $\LLL aX\in\catdfb(\Comp Ra)$.
\end{enumerate}
\end{fact}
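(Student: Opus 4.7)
The plan is to pivot all four conditions through the Koszul complex $K = K^R(\underline y)$, using that $K$ is a bounded complex of finitely generated free $R$-modules whose homology modules $H_i(K)$ are each annihilated by $\fa$ (as each $y_i$ acts nullhomotopically on $K$). Consequently each $H_i(K)$ is a finitely generated $R/\fa$-module, which will let us dévissage between $R/\fa$-coefficients and $K$-coefficients. I would first verify that (i) is independent of the generating sequence: two such Koszul complexes differ by iterated mapping cones of maps between shifted copies of $R$, and $\catdfb(R)$ is closed under cones.

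For (i) $\Leftrightarrow$ (ii), I would use the spectral sequence $E^2_{p,q} = \Tor{p}{H_q(K)}{X} \Rightarrow H_{p+q}(\Lotimes{K}{X})$ arising from $K$ being semi-flat. In each total degree only finitely many $(p,q)$ contribute (since $K$ is bounded), so finite generation of each $\Tor{p}{H_q(K)}{X}$ yields (i). Assuming (ii), a dévissage of the finitely generated $R/\fa$-module $H_q(K)$ through short exact sequences starting from $R/\fa$ transfers finite generation of $\Tor{*}{R/\fa}{X}$ to finite generation of $\Tor{*}{H_q(K)}{X}$, giving (i). Conversely, from (i), the associativity isomorphism $\Lotimes{R/\fa}{(\Lotimes{K}{X})} \simeq \Lotimes{K}{(\Lotimes{R/\fa}{X})}$ places the right-hand side in $\catdfb(R)$, and a derived Nakayama argument (explained below) pulls this back to (ii).

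For (ii) $\Leftrightarrow$ (iii), I would exploit the self-duality $\Rhom{K}{R} \simeq \shift^{-n} K$, where $n$ is the length of $\underline y$. Since $K$ is perfect, $\Rhom{K}{X} \simeq \Lotimes{\shift^{-n} K}{X}$, so the finiteness of $\Rhom{K}{X}$ matches that of $\Lotimes{K}{X}$; a dual spectral-sequence/dévissage argument then transfers the $K$-version to the $R/\fa$-version. For (i) $\Leftrightarrow$ (iv), I would rely on Fact~\ref{fact130619b}, specifically $\LLL a X \simeq \Rhom{\RRG a R}{X}$, combined with $\RG a R \simeq \cech{\underline y}$, the tight link between the \v{C}ech and Koszul complexes, and MGM equivalence (Fact~\ref{fact150626a}) to establish $\Lotimes{K}{X} \simeq \Lotimes{K}{\LLL a X}$ as complexes over $\Comp R a$. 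Finite generation then transfers in both directions via the flatness of $R \to \Comp R a$ and a symmetric spectral-sequence argument.

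The main obstacle is the derived Nakayama lemma used in the converse implications: from $\Lotimes{K}{Y} \in \catdfb(R)$ (or $\Rhom{K}{Y} \in \catdfb(R)$), where $Y \in \catdb(R)$ has $\fa$-torsion homology, one must conclude $Y \in \catdf(R)$. For modules this is essentially classical Nakayama applied to $R/\fa$-modules, but for complexes in $\catdb(R)$ without an a priori finiteness hypothesis, one must run an induction on amplitude via truncation triangles, feeding each inductive step with the $\fa$-torsion hypothesis. Controlling the interplay between the bounded amplitude of $Y$ and the growth of $\Tor{p}{R/\fa}{Y}$ for large $p$ (possible if $R/\fa$ has infinite projective dimension) is the delicate point.
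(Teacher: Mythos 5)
The paper imports this statement as a black box, citing \cite[Theorem~1.3]{sather:scc}, so there is no internal proof here to compare against; evaluating your outline on its own terms, I find a genuine gap in the step you yourself identify as the crux. The ``derived Nakayama lemma'' you rely on — that if $Y\in\catdb(R)$ has $\fa$-torsion homology and $\Lotimes KY\in\catdfb(R)$, then $Y\in\catdf(R)$ — is false. Take $R=k[\![x]\!]$, $\fa=\m=(x)$, and $Y=E:=E_R(k)$ concentrated in degree $0$. Then $Y$ is $\m$-torsion, and $\Lotimes KY=\cone(x\colon E\to E)\simeq\shift k$ lies in $\catdfb(R)$ because $x$ acts surjectively on $E$ with kernel the socle $k$; yet $E$ is not finitely generated. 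So the $\fa$-torsion hypothesis is genuinely too weak, and the induction on amplitude you sketch cannot close — this counterexample already has amplitude zero, i.e., it is a module, so even your base case fails.

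What makes your architecture salvageable is that the complex to which you actually need to apply this lemma, $Y=\Lotimes{R/\fa}{X}$, is not merely $\fa$-torsion: it lies in the essential image of $\catd(R/\fa)\to\catd(R)$, so $\fa$ \emph{annihilates} it. Then each $y_j$ acts by zero on (a suitable model of) $Y$, so $\Lotimes{K^R(y_j)}{Y}=\cone(0\colon Y\to Y)\simeq Y\oplus\shift Y$, and iterating over the whole generating sequence gives $\Lotimes KY\simeq\bigoplus_{j}(\shift^j Y)^{\oplus\binom nj}$; finite generation of homology then transfers in both directions with no Nakayama and no truncation induction at all. You should therefore replace ``$\fa$-torsion homology'' by ``homology annihilated by $\fa$'' throughout, and note that $Y=\Lotimes{R/\fa}{X}$ lives in $\catd_+(R)$, not $\catdb(R)$, when $R/\fa$ has infinite flat dimension. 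A similar looseness affects your (ii)$\Rightarrow$(i) dévissage: filtering the finitely generated $R/\fa$-module $H_q(K)$ ``by copies of $R/\fa$'' need not terminate; the clean route is to rewrite $\Lotimes{H_q(K)}{X}\simeq\Lotimes[R/\fa]{H_q(K)}{(\Lotimes{R/\fa}{X})}$ and invoke finiteness of the homology of a derived tensor product over $R/\fa$ of a complex in $\catdfb(R/\fa)$ against one in $\catd^{\mathrm f}_+(R/\fa)$.
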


\begin{defn}\label{def120925d}
An $R$-complex $X\in\catdb(R)$ is \emph{$\mathfrak{a}$-adically finite} if it satisfies the equivalent conditions of Fact~\ref{thm130612a} and $\operatorname{supp}_R(X) \subseteq \operatorname{V}(\mathfrak{a})$.
\end{defn}

\begin{disc}\label{disc151204a}
Because of Fact~\ref{cor130528a},
an $R$-module $M$ is $\fa$-adically finite if and only if it is $\fa$-torsion and has $\Ext i{R/\fa}M$ finitely generated for all $i$.
\end{disc}

\begin{ex}\label{ex160206a}
Let $X\in\catdb(R)$ be given.
\begin{enumerate}[(a)]
\item \label{ex160206a1}
If $X\in\catdfb(R)$, then we have $\supp_R(X)=\VE(\fb)$ for some ideal $\fb$, and it follows that $X$ is $\fa$-adically finite
whenever $\fa\subseteq\fb$. (The case $\fa=0$ is from~\cite[Proposition~7.8(a)]{sather:scc}, and the general case follows readily.)
\item \label{ex160206a2}
$K$ and $\RG aR$ are $\fa$-adically finite, by~\cite[Fact~3.4 and Theorem~7.10]{sather:scc}.
\item \label{ex160206a3}
The homology modules of $X$ are artinian if and only if there is an ideal $\fa$ of finite colength (i.e., such that $R/\fa$ is artinian)
such that $X$ is $\fa$-adically finite, by~\cite[Proposition~5.11]{sather:afcc}.
\end{enumerate}
\end{ex}

We continue with a few  semidualizing definitions.

\begin{defn}\label{defn151204a}
An $R$-complex $C\in\catdfb(R)$ is \emph{semidualizing} if the natural homothety morphism $\chi^R_C\colon R\to\Rhom CC$ is an isomorphism in $\catd(R)$.
The set of shift-isomorphism classes of semidualizing $R$-complexes is denoted $\s(R)$.
A \emph{tilting} $R$-complex\footnote{These are called ``invertible''  in~\cite{avramov:rrc1}.} is a semidualizing $R$-complex of finite projective dimension,
and a \emph{dualizing} $R$-complex is a semidualizing $R$-complex of finite injective dimension.
\end{defn}

We end this section with a few useful notes about completions.

\begin{lem}\label{lem151205a}
Let $\psi\colon R\to\Comp Ra$ be the natural map.
\begin{enumerate}[\rm(a)]
\item\label{lem151205a1}
There is a bijection $\mspec(R)\bigcap\VE(\fa)\to\mspec(\Comp Ra)$ given by $\m\mapsto\m\Comp Ra$.
The inverse of this bijection is given by contraction along $\psi$. 
\item\label{lem151205a2}
There is a bijection $\VE(\fa)\to\VE(\fa\Comp Ra)$ given by $\p\mapsto\p\Comp Ra$.
The inverse of this bijection is given by contraction along $\psi$. 
\item\label{lem151205a4}
If $R$ is locally Gorenstein, then so is $\Comp Ra$. 
\end{enumerate}
\end{lem}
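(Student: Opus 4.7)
For parts (a) and (b), the plan is to exploit the natural isomorphism $R/\fa^n \cong \Comp Ra/\fa^n\Comp Ra$ induced by $\psi$ for every $n \geq 1$. Taking $n=1$ gives $R/\fa \cong \Comp Ra/\fa\Comp Ra$, which provides an inclusion-preserving bijection between ideals of $R$ containing $\fa$ and ideals of $\Comp Ra$ containing $\fa\Comp Ra$. For (b), the forward map $\p \mapsto \p\Comp Ra$ produces the ideal of $\Comp Ra$ whose image in $\Comp Ra/\fa\Comp Ra$ corresponds to $\p/\fa$ under the isomorphism above; it is prime because the quotient is a domain, and contraction along $\psi$ furnishes the inverse. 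For (a), one additionally needs every maximal ideal of $\Comp Ra$ to contain $\fa\Comp Ra$, which holds because $\fa\Comp Ra$ lies in the Jacobson radical of the noetherian $\fa\Comp Ra$-adically complete ring $\Comp Ra$; hence the maximal ideals of $\Comp Ra$ are exactly the preimages of the maximal ideals of $\Comp Ra/\fa\Comp Ra \cong R/\fa$.

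For (c), the plan is to reduce to showing that the localization of $\Comp Ra$ at each maximal ideal is Gorenstein, using that Gorensteinness localizes and can be checked on the maximal spectrum. By (a) such a maximal ideal has the form $\m\Comp Ra$ for some maximal $\m\subseteq R$ with $\fa\subseteq\m$. The key step is the identification
$$\Comp{(\Comp Ra)}{\m\Comp Ra} \;\cong\; \Comp Rm,$$
which follows from $(\m\Comp Ra)^n = \m^n\Comp Ra$ together with $\Comp Ra/\m^n\Comp Ra \cong R/\m^n$ (using $\fa^n \subseteq \m^n$ and the $n$-th level of $R/\fa^n \cong \Comp Ra/\fa^n\Comp Ra$). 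Now $\Comp Rm$ also coincides with the $\m R_\m$-adic completion of $R_\m$, which is Gorenstein by hypothesis, so $\Comp Rm$ is a Gorenstein complete local ring. Since $\Comp Rm$ is equally the completion of $(\Comp Ra)_{\m\Comp Ra}$ at its maximal ideal, and Gorensteinness of a noetherian local ring is equivalent to that of its completion (via faithful flatness), $(\Comp Ra)_{\m\Comp Ra}$ is Gorenstein.

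The main obstacle is (c): while (a) and (b) are standard adic-completion arithmetic, (c) hinges on the double identification $\Comp{(\Comp Ra)_{\m\Comp Ra}}{\m\Comp Ra(\Comp Ra)_{\m\Comp Ra}} \cong \Comp{(\Comp Ra)}{\m\Comp Ra} \cong \Comp Rm$ and on transferring Gorensteinness through the two resulting completion steps. The proof will invoke standard properties of adic completions for noetherian rings (e.g., from Matsumura) rather than reproving them.
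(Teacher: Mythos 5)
Your proofs of (a) and (b) run along exactly the same lines as the paper's (the paper records only the isomorphism $R/\fa\cong\Comp Ra/\fa\Comp Ra$ and the Jacobson-radical observation, then declares the rest routine; you spell out the routine part).

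For (c), however, you take a genuinely different route. The paper observes that, for $\ideal M\in\mspec(\Comp Ra)$ with contraction $\m$, the induced map $R_\m\to(\Comp Ra)_{\ideal M}$ is flat and local with closed fibre a field, and then invokes the standard ascent of the Gorenstein property along a flat local homomorphism with Gorenstein closed fibre. You instead identify the $\m\Comp Ra$-adic completion of $\Comp Ra$ with $\Comp Rm$ via the level-by-level isomorphisms $\Comp Ra/\m^n\Comp Ra\cong R/\m^n$, observe that $\Comp Rm$ is simultaneously the completion of $R_\m$ and of $(\Comp Ra)_{\ideal M}$, and then use that a noetherian local ring is Gorenstein if and only if its completion is. Both arguments are correct. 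Your version proves the slightly stronger structural fact that $R_\m$ and $(\Comp Ra)_{\ideal M}$ share a common completion, so any property detected by completion (not only Gorensteinness) transfers; the price is a bit more bookkeeping with powers of ideals, whereas the paper's closed-fibre argument dispatches (c) in essentially one line. It's worth noting that the two arguments aren't entirely independent: the equivalence of Gorensteinness between a local ring and its completion, which you invoke, is itself often proved by the very flat-local-homomorphism criterion the paper uses.
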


\begin{proof}
\eqref{lem151205a1}--\eqref{lem151205a2}
The induced map $R/\fa\to \Comp Ra/\fa\Comp Ra$ is an isomorphism.
Since $\fa\Comp Ra$ is contained in the Jacobson radical of $\Comp Ra$, the result now follows readily.

\eqref{lem151205a4}
Assume that $R$ is locally Gorenstein, and let $\ideal M\in\mspec(\Comp Ra)\subseteq\VE(\fa\Comp Ra)$ be given.
By part~\eqref{lem151205a1}, the contraction $\m$ of $\ideal M$ in $R$ is maximal and satisfies $\m\Comp Ra=\ideal M$. 
It follows that the closed fibre of the induced flat, local ring homomorphism $R_\m\to\Comp Ra_{\ideal M}$ is a field. 
Thus, the assumption that $R_\m$ is Gorenstein implies that $\Comp Ra_{\ideal M}$ is Gorenstein as well. 
\end{proof}

\section{Homothety Morphisms}\label{sec130818b}

This section is devoted to some technical lemmas that we use to show that $\fa$-adic semidualizing complexes are well-defined.

\begin{lem}\label{lem151008a}
Let $M\in\catd_-(R)$
with $\supp_R(M)\subseteq V(\fa)$. 
Then $M$ has a bounded above semi-injective resolution $M\xra\simeq J$ over $R$ consisting of 
$\Comp{R}{a}$-module homomorphisms of injective $\Comp Ra$-modules.
\end{lem}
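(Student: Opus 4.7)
The plan is to produce, from any bounded above semi-injective $R$-resolution $I$ of $M$, the subcomplex of $\fa$-torsion injectives $J:=\Gamma_\fa(I)$, show it still resolves $M$, and then verify that $J$ has the claimed $\Comp Ra$-structure via three ingredients: the canonical $\Comp Ra$-action on $\fa$-torsion $R$-modules, automatic $\Comp Ra$-linearity of $R$-linear maps between them, and $\Comp Ra$-injectivity of $\fa$-torsion injective $R$-modules.

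First I would build $J$. By Fact~\ref{cor130528a}, the hypothesis $\supp_R(M)\subseteq\VE(\fa)$ says $M\in\catdator$. Pick any bounded above semi-injective $R$-resolution $M\xra\simeq I$ (which exists because $M\in\catd_-(R)$) and set $J:=\Gamma_\fa(I)$. Each $J_i$ is an $\fa$-torsion direct summand of $I_i$, hence injective over $R$; $J$ is bounded above, so it is semi-injective over $R$. The inclusion $J\into I$ is a concrete representative of the natural morphism $\RG aM\to M$ in $\catd(R)$, which is an isomorphism by hypothesis, so $M\simeq J$ in $\catd(R)$. Since $J$ is semi-injective, this $\catd(R)$-isomorphism lifts to an honest chain quasi-isomorphism $M\xra\simeq J$.

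Next comes the $\Comp Ra$-structure. Any $\fa$-torsion $R$-module carries a canonical $\Comp Ra$-action: if $x$ is annihilated by $\fa^n$ then the $R$-action on $x$ factors through $R/\fa^n=\Comp Ra/\fa^n\Comp Ra$. I would check directly that an $R$-linear map $f\colon N\to L$ between $\fa$-torsion modules is automatically $\Comp Ra$-linear: given $\hat r\in\Comp Ra$ and $x\in N$, choose $k$ so that $\fa^k$ kills both $x$ and $f(x)$ and $r\in R$ with $\hat r-r\in\fa^k\Comp Ra$; then $\hat rx=rx$ and $\hat rf(x)=rf(x)$, whence $f(\hat rx)=\hat rf(x)$. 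Applied to the differentials of $J$, this makes $J$ a complex of $\Comp Ra$-modules with $\Comp Ra$-linear differentials.

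The main obstacle is showing each $J_i$ is injective as an $\Comp Ra$-module. I plan to verify Baer's criterion in the noetherian ring $\Comp Ra$: given a finitely generated ideal $\ideal j\subseteq\Comp Ra$ and an $\Comp Ra$-linear map $f\colon\ideal j\to J_i$, the image $f(\ideal j)$ is finitely generated and $\fa$-torsion, so $\fa^m f(\ideal j)=0$ for some $m$. Artin--Rees in $\Comp Ra$ produces $c$ with $\ideal j\cap\fa^k\Comp Ra\subseteq\fa^{k-c}\ideal j$ for $k\geq c$; taking $k\geq c+m$ gives $f(\ideal j\cap\fa^k\Comp Ra)\subseteq\fa^m f(\ideal j)=0$, so $f$ descends to a map $\bar f\colon\ideal j/(\ideal j\cap\fa^k\Comp Ra)\into\Comp Ra/\fa^k\Comp Ra=R/\fa^k\to J_i$. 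Injectivity of $J_i$ over $R$ extends $\bar f$ to $g\colon R/\fa^k\to J_i$, which by the previous paragraph is automatically $\Comp Ra$-linear; the composite $\Comp Ra\onto R/\fa^k\xra{g}J_i$ is then an $\Comp Ra$-linear extension of $f$, which completes the verification.
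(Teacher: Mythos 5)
Your proposal is correct, and it takes a somewhat different route from the paper's proof, though the two are closely related. For the construction of $J$, the paper cites a prior result giving a bounded above semi-injective resolution by $\fa$-torsion injectives with $\ass_R(J_i)\subseteq\supp_R(M)$ directly, while you build $J:=\Gamma_\fa(I)$ from an arbitrary bounded above semi-injective resolution $I$ and use the hypothesis $\supp_R(M)\subseteq\VE(\fa)$ together with semi-injectivity of $J$ to get the quasi-isomorphism $M\xra\simeq J$. Interestingly, the paper's Remark~\ref{disc151204b} explicitly discusses your $\Gamma_\fa$-construction, noting that it succeeds in the bounded-above case (as here) but fails for unbounded complexes because $\Gamma_\fa$ need not preserve semi-injectivity; so your approach is exactly the one the authors have in mind as a natural alternative in this setting. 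For the $\Comp Ra$-structure, your observations about the canonical $\Comp Ra$-action on $\fa$-torsion modules and the automatic $\Comp Ra$-linearity of $R$-linear maps between them match what the paper imports from Kubik--Leamer--Sather-Wagstaff. The more substantial difference is how each proof establishes $\Comp Ra$-injectivity of the $J_i$: the paper uses the isomorphism $J_i\cong\Otimes{\Comp Ra}{J_i}$, the associated-prime condition forcing $\Otimes{\Comp Ra}{\kappa(\p)}\cong\kappa(\p)$, and Foxby's theorem on injective modules under flat base change; you verify Baer's criterion over the noetherian ring $\Comp Ra$ by hand, using that $f(\ideal j)$ is killed by a power of $\fa$ and Artin--Rees to reduce to a map out of a submodule of $R/\fa^k$, which $R$-injectivity of $J_i$ extends and the automatic $\Comp Ra$-linearity upgrades. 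Your argument is more elementary and self-contained, at the cost of more bookkeeping; the paper's argument is shorter but leans on a structural classification of $\fa$-torsion injectives and on Foxby's base-change criterion.
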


\begin{proof}
From~\cite[Proposition~3.8 and Corollary~3.9]{sather:scc} we know that $M$ has a bounded above semi-injective resolution $M\xra\simeq J$ over $R$ consisting of 
$\fa$-torsion injective $R$-modules with $\ass_R(J_i)\subseteq\supp_R(M)\subseteq\VE(\fa)$ for each $i$.
The torsion condition implies that each differential $\partial^J_i$ is $\Comp Ra$-linear and  the natural map
$J\to\Otimes{\Comp Ra}{J}$ is an isomorphism; see~\cite[Fact~2.1 and Lemma~2.2]{kubik:hamm2}. 
The associated prime condition implies that each prime ideal $\p\in\ass_R(J_i)$  satisfies
$\Comp Ra/\p\Comp Ra\cong R/\p$, so we have $\Otimes{\Comp Ra}{\kappa(\p)}\cong\kappa(\p)$. 
We conclude from~\cite[Theorem~1]{foxby:imufbc} that each $J_i\cong\Otimes{\Comp Ra}{J_i}$ is injective over $\Comp Ra$,
as desired.
\end{proof}

\begin{disc}\label{disc151204b}
One might be tempted to prove the preceding result for arbitrary (that is, unbounded) complexes as follows. 
Let $X\xra\simeq J$ be a semi-injective resolution.
The condition $\supp_R(X)\subseteq\VE(\fa)$ says that the natural morphism $\RG aX\to X$ is an isomorphism in $\catd(R)$,
so we have $X\simeq\Gamma_{\fa}(J)$. The complex $\Gamma_{\fa}(J)$ consists of $\fa$-torsion injective $R$-modules,
so the isomorphism gives a resolution of the desired form, as in the proof of the preceding result. 
The problem with this line of reasoning is that $\Gamma_\fa(J)$ can fail to be semi-injective; see~\cite[Example~3.1]{sather:elclh}.
\end{disc}

\begin{defn}\label{rmk140129a}
Let $M\in\catd_-(R)$
with $\supp_R(M)\subseteq V(\fa)$. 
Let $M\xra\simeq J$ be a semi-injective resolution as in Lemma~\ref{lem151008a}.
This yields a well-defined chain map $\chi^{\Comp Ra}_J\colon\Comp Ra\to\Hom JJ$ given by $\chi^{\Comp Ra}_J(r)(j)=rj$.
This in turn gives rise to a well-defined ``homothety morphism'' $\chi^{\Comp Ra}_M\colon\Comp Ra\to\Rhom MM$ in $\catd(R)$.
\end{defn}

The rest of this section is devoted to a lemma for use in the proof of Theorem~\ref{thm151203b} from the introduction.
A subtlety of the result is worth noting here: 
in part~\eqref{lem151004a1} we only have an isomorphism over $R$; however, we are able to translate it to information about $\Comp Ra$-isomorphisms.

\begin{lem}\label{lem151004a}
Let $M\in\catd_-(R)$ be such that $\supp_R(M)\subseteq\VE(\fa)$.
\begin{enumerate}[\rm(a)]
\item\label{lem151004a1}
One has
$\Rhom MM\simeq\Rhom[\Comp Ra]{\LLL aM}{\LLL aM}$
in $\catd(R)$. 
\item\label{lem151004a2}
There is an isomorphism
$\Comp Ra\simeq \Rhom{M}{M}$  in $\catd(R)$
if and only if there is an isomorphism
$\Comp Ra\simeq \Rhom[\Comp Ra]{\LLL aM}{\LLL aM}$ in $\catd(\Comp Ra)$.
\item\label{lem151004a3}
The  morphism
$\chi^{\Comp Ra}_{M}\colon \Comp Ra\to \Rhom{M}{M}$ is an isomorphism in $\catd(R)$
if and only if the  morphism
$\chi^{\Comp Ra}_{\LLL aM}\colon \Comp Ra\to \Rhom[\Comp Ra]{\LLL aM}{\LLL aM}$ is an isomorphism in $\catd(\Comp Ra)$.
\end{enumerate}
\end{lem}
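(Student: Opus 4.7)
The plan is to use the special semi-injective resolution $M \xra{\simeq} J$ furnished by Lemma~\ref{lem151008a}: $J$ is a bounded-above complex of $\fa$-torsion modules that are injective over both $R$ and $\Comp Ra$, so $J$ is semi-injective over both rings simultaneously. This dual semi-injective property is what lets me translate computations between the two derived categories.

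For part~\eqref{lem151004a1}, I would chain together the isomorphisms
\[
\Rhom{M}{M} \simeq \Hom{J}{J} = \Hom[\Comp Ra]{J}{J} \simeq \Rhom[\Comp Ra]{\RRG{a}{M}}{\RRG{a}{M}} \simeq \Rhom[\Comp Ra]{\LLL{a}{M}}{\LLL{a}{M}}.
\]
The first isomorphism comes from $R$-semi-injectivity of $J$. The equality in the middle holds because any $R$-linear map between $\fa$-torsion $\Comp Ra$-modules is automatically $\Comp Ra$-linear (as already used in the proof of Lemma~\ref{lem151008a}). The next isomorphism uses $\Comp Ra$-semi-injectivity of $J$ together with the identification of $J$, viewed as an $\Comp Ra$-complex, with $\RRG{a}{M}$, which holds because $\Gamma_{\fa}(J) = J$. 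The final isomorphism invokes MGM equivalence (Fact~\ref{fact150626a}) over $\Comp Ra$: since $\LLL{a}{\RRG{a}{M}} \simeq \LLL{a}{M}$ and the functors $\LLL{a}{-}$ and $\RRG{a}{-}$ are quasi-inverse on the torsion and complete subcategories, derived Hom is preserved. Importantly, each step is canonical in $\catd(\Comp Ra)$, so the composite isomorphism lives naturally in that category.

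For part~\eqref{lem151004a3}, each step in the chain commutes with scalar multiplication by $\Comp Ra$ on the chain-level representatives, so the composite isomorphism identifies the two homothety morphisms. Since the forgetful functor $\catd(\Comp Ra) \to \catd(R)$ reflects isomorphisms (homology is detected on the underlying abelian groups), the homothety $\chi^{\Comp Ra}_M$ is an isomorphism in $\catd(R)$ if and only if $\chi^{\Comp Ra}_{\LLL{a}{M}}$ is an isomorphism in $\catd(\Comp Ra)$.

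For part~\eqref{lem151004a2}, the direction $(\Leftarrow)$ is immediate from~\eqref{lem151004a1} by applying the forgetful functor. For $(\Rightarrow)$, the hypothesis and~\eqref{lem151004a1} imply that $\Rhom[\Comp Ra]{\LLL{a}{M}}{\LLL{a}{M}}$ has homology concentrated in degree~$0$ with $\HH_0 \cong \Comp Ra$ as $R$-modules; as an $\Comp Ra$-complex it is therefore quasi-isomorphic to the $\Comp Ra$-module $\End_{\catd(\Comp Ra)}(\LLL{a}{M})$ placed in degree~$0$. The step I expect to be the main obstacle is promoting the $R$-isomorphism $\End_{\catd(\Comp Ra)}(\LLL{a}{M}) \cong \Comp Ra$ to an $\Comp Ra$-isomorphism, since in general an $R$-linear iso between two $\Comp Ra$-modules need not respect the $\Comp Ra$-action. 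I would address this by leveraging the $\Comp Ra$-algebra structure on the endomorphism ring (so that the homothety is a ring map, and its image is a central $\Comp Ra$-subalgebra) together with the derived $\fa$-adic completeness of $\Rhom[\Comp Ra]{\LLL{a}{M}}{\LLL{a}{M}}$ to rigidify the scalar action.
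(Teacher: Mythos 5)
Your part (a) follows the paper's own route: the same doubly-injective resolution $J$ from Lemma~\ref{lem151008a}, the same chain of identifications, and a direct MGM argument where the paper instead cites its earlier work. That part is fine. Parts (b) and (c), however, both stop short.

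For (c), the assertion that ``each step in the chain commutes with scalar multiplication, so the composite isomorphism identifies the two homothety morphisms'' is precisely the nontrivial point, not a proof of it. The first three links in the chain are literal identities of chain maps, so the homothety transports for free; but the last link, the MGM isomorphism $\Rhom[\Comp Ra]{\RRG aM}{\RRG aM}\simeq\Rhom[\Comp Ra]{\LLL aM}{\LLL aM}$, is obtained by routing both sides through $\Rhom[\Comp Ra]{\RGa a{N}}{\LLa aN}$ via $(\fromRG{a\Comp Ra}N)_*$ and $(\toLL{a\Comp Ra}N)^*$, and one must verify that $\chi^{\Comp Ra}_{\RGa aN}$ and $\chi^{\Comp Ra}_{\LLa aN}$ agree after these identifications. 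The paper does exactly this with a commutative square built out of Lipman's Theorem $(0.3)^*$; you state the conclusion of that diagram without exhibiting it. For (b), your reduction to promoting the $R$-linear isomorphism $\End_{\catd(\Comp Ra)}(\LLL aM)\cong\Comp Ra$ to an $\Comp Ra$-linear one is a genuinely different route from the paper's (which instead pushes the given isomorphism through $\Rhom{\Comp Ra}{-}$, Hom-tensor adjointness, and Hom-cancellation). Your reduction is correct, and the needed rigidity has a clean elementary source: any $R$-linear map between $\Comp Ra$-modules whose target is $\fa$-adically separated is automatically $\Comp Ra$-linear, because for $a\in\Comp Ra$ and $r_n\in R$ with $a-r_n\in\fa^n\Comp Ra$ one gets $f(ax)-af(x)\in\fa^nY$ for all $n$. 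Applied to any $R$-isomorphism $\End_{\catd(\Comp Ra)}(\LLL aM)\to\Comp Ra$, this gives the $\Comp Ra$-isomorphism outright, with no appeal to the ring structure. But your proposal does not identify this mechanism; ``$\Comp Ra$-algebra structure,'' ``central $\Comp Ra$-subalgebra,'' and ``derived $\fa$-adic completeness'' gesture at adjacent ideas without closing the gap, and it is not clear the ring-theoretic route alone works (one would still need injectivity of the homothety, which does not follow from the algebra structure). So you have correctly isolated the obstacle and a viable alternative strategy, but the argument as written does not resolve it.
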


\begin{proof}
By Lemma~\ref{lem151008a}, the $R$-complex $M$ has a bounded above semi-injective resolution $M\xra\simeq I$ consisting of 
injective $\fa$-torsion $\Comp Ra$-modules. This explains the first three steps in the next display. 
\begin{align*}
\Rhom MM
&\simeq\Hom JJ \\
&=\Hom{\Gamma_\fa(J)}{\Gamma_\fa(J)}\\
&=\Hom[\Comp Ra]{\Gamma_\fa(J)}{\Gamma_\fa(J)}\\
&\simeq\Rhom[\Comp Ra]{\RRG aM}{\RRG aM}
\end{align*}
The fourth step follows from the fact that $J$ is a bounded above semi-injective resolution of $M$,
because this implies that $\Gamma_\fa(J)$ is a bounded above complex of injective $\Comp Ra$-modules,
so it is a semi-injective resolution of $\RRG aM$ over $\Comp Ra$.

\eqref{lem151004a1}
In~\cite[Theorem~4.7]{sather:elclh} we show that there is an isomorphism
\begin{equation}\label{eq151008a}
\Rhom[\Comp Ra]{\RRG aM}{\RRG aM}\simeq\Rhom[\Comp Ra]{\LLL aM}{\LLL aM}
\end{equation}
in $\catd(\Comp Ra)$. With the isomorphisms described above, this explains the  isomorphism in part~\eqref{lem151004a1} of the theorem.

\eqref{lem151004a2}
The isomorphisms from the first paragraph of this proof provide the first step in the next sequence in $\catd(\Comp Ra)$.
\begin{align*}
\Rhom{\Comp Ra}{\Rhom MM}
&\simeq\Rhom{\Comp Ra}{\Rhom[\Comp Ra]{\RRG aM}{\RRG aM}}\\
&\simeq\Rhom[\Comp Ra]{\Lotimes{\Comp Ra}{Q(\RRG aM)}}{\RRG aM}\\
&\simeq\Rhom[\Comp Ra]{\Lotimes{\Comp Ra}{\RG aM}}{\RRG aM}\\
&\simeq\Rhom[\Comp Ra]{\RRG aM}{\RRG aM}\\
&\simeq\Rhom[\Comp Ra]{\Comp Ra}{\Rhom[\Comp Ra]{\RRG aM}{\RRG aM}}
\end{align*}
The second step here is Hom-tensor adjointness, where $Q$ is the forgetful functor $\catd(\Comp Ra)\to\catd(R)$.
The third and fourth steps are from Fact~\ref{fact130619b}, 
and the last one is from Hom-cancellation.
From this sequence, we have $\Comp Ra\simeq\Rhom MM$ in $\catd(R)$
if and only if $\Comp Ra\simeq\Rhom[\Comp Ra]{\RRG aM}{\RRG aM}$ in $\catd(\Comp Ra)$,
i.e.,
if and only if $\Comp Ra\simeq\Rhom[\Comp Ra]{\LLL aM}{\LLL aM}$ in $\catd(\Comp Ra)$, by~\eqref{eq151008a}.

\eqref{lem151004a3}
The isomorphisms from the first paragraph of this proof also yield the next commutative diagram in $\catd(R)$.
$$\xymatrix{
\Comp Ra\ar[r]^-{\chi^{\Comp Ra}_{\RRG aM}}\ar[d]_{\chi^{\Comp Ra}_{M}}
&\Rhom[\Comp Ra]{\RRG aM}{\RRG aM} \ar[ld]^{\simeq}\\
\Rhom MM
}$$
In particular, the morphism $\chi^{\Comp Ra}_{M}$ is an isomorphism in $\catd(R)$
if and only if $\chi^{\Comp Ra}_{\RRG aM}$ is so, that is, if and only if $\chi^{\Comp Ra}_{\RRG aM}$ is an isomorphism in $\catd(\Comp Ra)$,
since $\chi^{\Comp Ra}_{\RRG aM}$ is a morphism in $\catd(\Comp Ra)$.
Thus, to explain the desired bi-implication, it suffices to show that the  homothety morphisms
$\chi^{\Comp Ra}_{\RRG aM}\colon \Comp Ra\to \Rhom[\Comp Ra]{\RRG aM}{\RRG aM}$
and 
$\chi^{\Comp Ra}_{\LLL aM}\colon \Comp Ra\to \Rhom[\Comp Ra]{\LLL aM}{\LLL aM}$
in $\catd(\Comp Ra)$ are isomorphisms simultaneously.
To this end, first consider the natural $\Comp Ra$-isomorphisms
$$\RRG aM\simeq\RRG a{\LL aM}\simeq\RGa a{\LLL aM}$$ 
from~\cite[Lemmas~4.4--4.5]{sather:elclh}.
It follows that $\chi^{\Comp Ra}_{\RRG aM}$ is an isomorphism in $\catd(\Comp Ra)$ if and only if
$\chi^{\Comp Ra}_{\RGa a{\LLL aM}}$ is so. 
Similarly, the isomorphism 
$$\LLL aM\simeq\LLa a{\LLL aM}$$
from~\cite[Theorem~4.3]{sather:elclh} shows that $\chi^{\Comp Ra}_{\LLL aM}$ is an isomorphism in $\catd(\Comp Ra)$ if and only if
$\chi^{\Comp Ra}_{\LLa a{\LLL aM}}$ is so. 
Thus, it remains to show that $\chi^{\Comp Ra}_{\RGa a{\LLL aM}}$ is an isomorphism in $\catd(\Comp Ra)$ if and only if
$\chi^{\Comp Ra}_{\LLa a{\LLL aM}}$ is so. 
The fact that these are isomorphisms simultaneously follows from the next commutative diagram in $\catd(\Comp Ra)$, wherein we set $N:=\LLL aM$:
$$\xymatrix{
\Comp Ra\ar[r]^-{\chi^{\Comp Ra}_{\RGa a{N}}}\ar[d]_{\chi^{\Comp Ra}_{\LLa a{N}}}
&\Rhom[\Comp Ra]{\RGa a{N}}{\RGa a{N}}\ar[d]_\simeq^{(\varepsilon_{\fa\Comp Ra}^N)_*}
\\
\Rhom[\Comp Ra]{\LLa a{N}}{\LLa a{N}}\ar[d]^\simeq_{(\vartheta^{\fa\Comp Ra}_N)^*}
&\Rhom[\Comp Ra]{\RGa a{N}}{N}\ar[d]_\simeq^{(\vartheta^{\fa\Comp Ra}_N)_*}
\\
\Rhom[\Comp Ra]{N}{\LLa a{N}}\ar[r]_-\simeq^-{(\varepsilon_{\fa\Comp Ra}^N)^*}
&\Rhom[\Comp Ra]{\RGa a{N}}{\LLa a{N}}.
}$$
The isomorphisms in this diagram are from~\cite[Theorem (0.3)$^*$]{lipman:lhcs}.\footnote{See 
also~\cite[Thoerem~6.12]{yekutieli:hct}. In addition, 
we have~\cite[Remark~6.14]{yekutieli:hct} for a discussion of some aspects of this result, and~\cite{yekutieli:ehct} for a correction.}
\end{proof}

\section{Adic Semidualizing Complexes}\label{sec151008a}

This section consists of examples and fundamental properties of $\fa$-adic semidualizing complexes, including the proof of Theorem~\ref{thm151203b} from the introduction. 

\begin{defn}\label{def120925e}
An $R$-complex $M$ is \emph{$\mathfrak{a}$-adically semidualizing} if $M$ is $\mathfrak{a}$-adically finite 
(see Definition~\ref{def120925d}) and the homothety morphism 
$\chi_{M}^{\widehat{R}^{\mathfrak{a}}}\colon \widehat{R}^{\mathfrak{a}} \rightarrow \Rhom{M}{M}$ 
from Definition~\ref{rmk140129a} is an isomorphism in $\catd(R)$.
The set of shift-isomorphism classes in $\catd(R)$ of $\fa$-adically semidualizing complexes is denoted
$\s^{\fa}(R)$.
\end{defn}

\begin{disc}\label{defn130503c}
An $R$-module $M$ is $\fa$-adically semidualizing as an $R$-complex
if it is $\fa$-adically finite\footnote{See Remark~\ref{disc151204a}.}, the natural homothety map $\Comp{R}{a} \to \Hom{M}{M}$, 
defined as in Defintion~\ref{rmk140129a},
is an isomorphism, and $\Ext{i}{M}{M}=0$ for all $i \geq 1$.
\end{disc}

\begin{disc}\label{rmk140204a}
If $M$ is an $\fa$-adically semidualizing $R$-complex then  $\supp_R(M)=\VE(\fa)$, by~\cite[Proposition~7.17]{sather:scc}.
\end{disc}

The next two propositions show that Definition~\ref{def120925e} yields the semidualizing complexes and quasi-dualizing modules as special cases. 

\begin{prop}\label{prop130528b}
An $R$-complex $M$ is semidualizing if and only if it is $0$-adically semidualizing,
that is, we have $\s(R)=\s^0(R)$. 
\end{prop}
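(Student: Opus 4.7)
The plan is to observe that for $\fa = 0$, every ingredient in Definition~\ref{def120925e} specializes to the corresponding ingredient in Definition~\ref{defn151204a}. Since $\Comp{R}{0} = R$ and $\VE(0) = \spec(R)$, the support condition $\supp_R(M) \subseteq \VE(\fa)$ built into Definition~\ref{def120925d} holds vacuously for every $R$-complex. Using condition~\eqref{cor130612a3} of Fact~\ref{thm130612a} with $\fa = 0$ gives $\Rhom{R}{M} \simeq M$, so the finiteness clause becomes $M \in \catdf(R)$; combined with the hypothesis $M \in \catdb(R)$ in Definition~\ref{def120925d}, this shows that $M$ is $0$-adically finite if and only if $M \in \catdfb(R)$. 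That matches the bounded-and-degree-wise-finite hypothesis appearing in the definition of a semidualizing complex.

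Next I would identify the two homothety morphisms. For $\fa = 0$, Lemma~\ref{lem151008a} produces a bounded above semi-injective resolution $M \xra\simeq J$ of $M$ by $\Comp{R}{0}$-linear homomorphisms of injective $\Comp{R}{0}$-modules; since $\Comp{R}{0} = R$, this is just an ordinary bounded above semi-injective $R$-resolution. The chain map $\chi_J^{\Comp{R}{0}} \colon R \to \Hom{J}{J}$ from Definition~\ref{rmk140129a}, given by $r \mapsto (j \mapsto rj)$, is then literally the classical homothety map representing $\chi_M^R$ from Definition~\ref{defn151204a}. Hence the induced morphisms in $\catd(R)$ coincide, and the $0$-adically semidualizing property unpacks verbatim as the semidualizing property, yielding $\s(R) = \s^0(R)$.

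No genuine obstacle arises here; the argument is a specialization exercise. The only care required is to verify that the $\fa$-torsion machinery behind Definition~\ref{rmk140129a} degenerates cleanly when $\fa = 0$, which it does since both $\Gamma_0$ and $\Lambda^0$ are the identity functor on $R$-modules.
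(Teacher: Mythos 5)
Your proof is correct and takes essentially the same route as the paper: identify $\Comp{R}{0}\cong R$, observe that the support condition is vacuous so $0$-adic finiteness reduces to membership in $\catdfb(R)$, and note that the two homothety morphisms coincide under this identification. The only superficial difference is that you derive the finiteness equivalence from condition~\eqref{cor130612a3} of Fact~\ref{thm130612a} directly, whereas the paper simply cites it.
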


\begin{proof}
The $R$-complex $M$ is 0-adically finite if and only if it is in $\catdfb(R)$; see, e.g., \cite[Proposition~7.8(a)]{sather:scc}.
Because of the isomorphism $\comp R^0\cong R$, we see that the homothety morphisms 
$\chi^{\comp R^0}_M$ and $\chi^R_M$ are isomorphisms simultaneously.
Thus, the result follows by definition.
\end{proof}

\begin{prop}\label{prop120925b}
Assume that $(R,\m)$ is local. 
\begin{enumerate}[\rm(a)]
\item\label{prop120925b1}
An $R$-complex $M\in\catdb(R)$ is $\m$-adically semidualizing if and only if each homology module $\HH_i(M)$ is artinian 
and the homothety morphism $\chi_{M}^{\widehat{R}^{\mathfrak{m}}}\colon \widehat{R}^{\mathfrak{m}} \rightarrow \Rhom{M}{M}$ is an isomorphism in $\catd(R)$.
\item\label{prop120925b2}
An $R$-module $T$ is quasi-dualizing if and only if it is $\m$-adically semidualizing.
\item\label{prop120925b3}
The injective hull $E:=E_R(R/\m)$ is $\m$-adically semidualizing.
\end{enumerate}
\end{prop}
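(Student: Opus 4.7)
The plan is to reduce parts~\eqref{prop120925b1} and~\eqref{prop120925b2} to Example~\ref{ex160206a}\eqref{ex160206a3}, using the fact that for a local ring the maximal ideal $\m$ itself has finite colength, and then to verify part~\eqref{prop120925b3} from those by a direct application of Matlis' theorem. The crucial (and hardest) ingredient is the equivalence, for a complex $M\in\catdb(R)$, between $\m$-adic finiteness and having artinian homology modules; everything else is bookkeeping.

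For part~\eqref{prop120925b1}, unravelling Definition~\ref{def120925e} shows that the only content beyond tautology is this equivalence. One direction is immediate from Example~\ref{ex160206a}\eqref{ex160206a3} applied with $\fa=\m$. For the converse, that example supplies \emph{some} $\m$-primary ideal $\fb$ with $M$ being $\fb$-adically finite, and one must bootstrap to $\m$-adic finiteness; this will be the main obstacle. The support condition transfers for free since $\VE(\fb)=\{\m\}=\VE(\m)$, while for the finiteness condition I would invoke Fact~\ref{thm130612a}\eqref{cor130612a3}: because $\fb$ is $\m$-primary, $R/\fb$ and $R/\m$ generate the same thick subcategory of $\catdfb(R)$ (each arises from the other by finitely many extensions of finite-length modules), so $\Rhom{R/\fb}M\in\catdf(R)$ if and only if $\Rhom{R/\m}M\in\catdf(R)$.

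Part~\eqref{prop120925b2} will follow by specializing~\eqref{prop120925b1} to a module $T$ concentrated in degree~$0$ and comparing with Remark~\ref{defn130503c}. Both ``quasi-dualizing'' and ``$\m$-adically semidualizing module'' ask for the homothety isomorphism $\Comp Rm\cong\Hom TT$ and the Ext-vanishing $\Ext iTT=0$ for $i\geq 1$; the remaining hypotheses ($T$ artinian for the former versus $T$ being $\m$-adically finite for the latter) are matched by~\eqref{prop120925b1}.

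For part~\eqref{prop120925b3} I would verify the module version of the definition for $E$. Injectivity of $E$ makes $\Ext iEE=0$ for $i\geq 1$ automatic, and a semi-injective resolution of $E$ over $R$ may be taken to be $E$ itself (it is already $\m$-torsion and injective over $\Comp Rm$), so the chain map $\chi^{\Comp Rm}_E$ from Definition~\ref{rmk140129a} reduces to the classical Matlis homothety $\Comp Rm\to\Hom EE$, which is an isomorphism by Matlis' theorem. The module $E$ is artinian by standard facts (its socle is $R/\m$), hence $\m$-adically finite by~\eqref{prop120925b1}, and~\eqref{prop120925b2} then yields $E\in\s^{\m}(R)$.
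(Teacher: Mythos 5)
Your overall plan is sound, and parts~\eqref{prop120925b2}--\eqref{prop120925b3} follow correctly once~\eqref{prop120925b1} is established; the paper's argument for~\eqref{prop120925b3} is likewise the standard Matlis isomorphism $\Comp Rm\cong\Hom EE$. The issue is in the bootstrap you propose for~\eqref{prop120925b1}. You invoke Example~\ref{ex160206a}\eqref{ex160206a3}, which only produces \emph{some} ideal $\fb$ of finite colength with $M$ being $\fb$-adically finite, and you then try to upgrade to $\m$-adic finiteness by claiming that $R/\fb$ and $R/\m$ generate the same thick subcategory of $\catdfb(R)$. This claim is false in general, and your parenthetical justification only establishes one containment. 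Indeed, the filtration of $R/\fb$ by copies of $R/\m$ shows $R/\fb\in\operatorname{thick}(R/\m)$, but there is no analogous filtration expressing $R/\m$ in terms of $R/\fb$; and the reverse containment genuinely fails. For a counterexample, take $(R,\m,k)$ artinian local and not a field (so non-regular), and $\fb=0$, which is $\m$-primary. Then $\operatorname{thick}(R/\fb)=\operatorname{thick}(R)$ is the subcategory of perfect complexes, while $k=R/\m$ has infinite projective dimension, so $k\notin\operatorname{thick}(R/\fb)$.

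There are two ways to repair this. One is to route through Fact~\ref{thm130612a}\eqref{cor130612a1} rather than~\eqref{cor130612a3}: the Koszul complexes on generating sequences of $\fb$ and of $\m$ are perfect complexes with identical support $\{\m\}$, so by Hopkins--Neeman--Thomason they generate the same thick subcategory of perfect complexes, and applying $\Lotimes{-}{M}$ then transfers the finiteness condition. The other, which is what the paper actually does, is to cite~\cite[Proposition~7.8(b)]{sather:scc} directly; that result states precisely the equivalence you need, that over local $(R,\m)$ a complex $M\in\catdb(R)$ has artinian homology if and only if it is $\m$-adically finite. The paper's entire proof consists of this citation together with the isomorphism $\Comp Rm\cong\Hom EE$, so it avoids the bootstrap problem from the outset.
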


\begin{proof}
Since each $\HH_i(M)$ is artinian if and only if $M\in\catdb(R)$
is $\m$-adically finite by \cite[Proposition 7.8(b)]{sather:scc}, the result follows by definition and the
standard isomorphism $\Comp Rm\xra\cong\Hom EE$.
\end{proof}

In light of Fact~\ref{cor130528a} and Remark~\ref{disc151204a}, the next result contains Theorem~\ref{thm151203b} from the introduction.

\begin{thm}\label{lem151005a}
Let $M\in\catdb(R)$. The following conditions are equivalent:
\begin{enumerate}[\rm(i)]
\item \label{lem151005a1}
$M$ is $\fa$-adically semidualizing over $R$;
\item \label{lem151005a2}
$M$ is $\fa$-adically finite, and we have $\Comp{R}{a} \simeq \Rhom{M}{M}$ in $\catd(R)$; and
\item \label{lem151005a3}
$\supp_R(M)\subseteq\VE(\fa)$ and $\LLL aM$ is semidualizing over $\Comp Ra$.
\end{enumerate}
\end{thm}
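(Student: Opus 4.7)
The strategy is to use Fact~\ref{thm130612a} to translate $\fa$-adic finiteness into bounded-finite generation of $\LLL aM$ over $\Comp Ra$, and then use Lemma~\ref{lem151004a} as a dictionary between the two homothety morphisms $\chi^{\Comp Ra}_M$ and $\chi^{\Comp Ra}_{\LLL aM}$. The ``surprising'' content, that an abstract isomorphism $\Comp Ra\simeq\Rhom MM$ forces the homothety morphism to be an isomorphism, gets reduced to the analogous (and standard) statement for semidualizing complexes over $\Comp Ra$.

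\textbf{(i)$\Rightarrow$(ii).} This is immediate from Definition~\ref{def120925e}: any isomorphism $\chi^{\Comp Ra}_M$ provides the desired abstract isomorphism $\Comp Ra\simeq\Rhom MM$.

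\textbf{(ii)$\Rightarrow$(iii).} The hypothesis of $\fa$-adic finiteness gives $\supp_R(M)\subseteq\VE(\fa)$ by definition, and it gives $\LLL aM\in\catdfb(\Comp Ra)$ by Fact~\ref{thm130612a}. The abstract isomorphism $\Comp Ra\simeq\Rhom MM$ in $\catd(R)$ transfers via Lemma~\ref{lem151004a}\eqref{lem151004a2} to an abstract isomorphism $\Comp Ra\simeq\Rhom[\Comp Ra]{\LLL aM}{\LLL aM}$ in $\catd(\Comp Ra)$. Since $\LLL aM\in\catdfb(\Comp Ra)$, the standard ``abstract isomorphism implies homothety isomorphism'' result from the theory of semidualizing complexes over a commutative noetherian ring (see, e.g., Christensen~\cite{christensen:scatac}) upgrades this to: the homothety morphism $\chi^{\Comp Ra}_{\LLL aM}$ is an isomorphism in $\catd(\Comp Ra)$. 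Thus $\LLL aM$ is semidualizing over $\Comp Ra$ per Definition~\ref{defn151204a}.

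\textbf{(iii)$\Rightarrow$(i).} Semidualizing means $\LLL aM\in\catdfb(\Comp Ra)$ with $\chi^{\Comp Ra}_{\LLL aM}$ an isomorphism. The condition $\LLL aM\in\catdfb(\Comp Ra)$ together with $\supp_R(M)\subseteq\VE(\fa)$ yields $\fa$-adic finiteness of $M$ by Fact~\ref{thm130612a} and Definition~\ref{def120925d}. Finally, Lemma~\ref{lem151004a}\eqref{lem151004a3} transfers the isomorphism of $\chi^{\Comp Ra}_{\LLL aM}$ into an isomorphism of $\chi^{\Comp Ra}_M$ in $\catd(R)$, so $M$ is $\fa$-adically semidualizing by Definition~\ref{def120925e}.

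The main obstacle is the upgrade in (ii)$\Rightarrow$(iii) from an abstract isomorphism to a homothety isomorphism; the point of the proof is that passing from $R$ to $\Comp Ra$ via $\LLL a(-)$ reduces this subtle step to the classical semidualizing situation over $\Comp Ra$, where $\LLL aM$ is honestly in $\catdfb(\Comp Ra)$ and the usual argument applies.
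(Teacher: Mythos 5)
Your proposal is correct and follows essentially the same route as the paper's own proof: (i)$\Rightarrow$(ii) is definitional, (ii)$\Rightarrow$(iii) uses Lemma~\ref{lem151004a}\eqref{lem151004a2} to transfer the abstract isomorphism over to $\Comp Ra$ and then invokes the classical ``abstract isomorphism implies the homothety morphism is an isomorphism'' fact for $\catdfb(\Comp Ra)$-complexes, and (iii)$\Rightarrow$(i) uses Lemma~\ref{lem151004a}\eqref{lem151004a3} to transfer the homothety isomorphism back. The paper cites \cite[Proposition~3.1]{avramov:rrc1} rather than Christensen for the upgrade step, but this is an immaterial difference in attribution.
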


\begin{proof}
The implication \eqref{lem151005a1}$\implies$\eqref{lem151005a2} is by definition.

\eqref{lem151005a2}$\implies$\eqref{lem151005a3}
Assume that $M$ is $\fa$-adically finite, and $\Comp{R}{a} \simeq \Rhom{M}{M}$ in $\catd(R)$.
By definition, this implies that $\supp_R(M)\subseteq\VE(\fa)$ and $\LLL aM\in\catdfb(\Comp Ra)$.
Also, we have $\Comp Ra\simeq\Rhom[\Comp Ra]{\LLL aM}{\LLL aM}$ in $\catd(\Comp Ra)$ by 
Lemma~\ref{lem151004a}\eqref{lem151004a2},
so~\cite[Proposition~3.1]{avramov:rrc1} implies that $\LLL aM$ is semidualizing over $\Comp Ra$.

\eqref{lem151005a3}$\implies$\eqref{lem151005a1}
This is verified like the previous implication, using Lemma~\ref{lem151004a}\eqref{lem151004a3}.
\end{proof}

The next result and its corollary show how to build examples of adic semidualizing complexes. 

\begin{thm}\label{thm140109a}
If $M$ is $\fa$-adically semidualizing complex over $R$ and $\fb$ is an ideal of $R$, then 
$\RG{\fb}{M}\simeq\RG{a+b}{M}$ is $\fa+\fb$-adically semidualizing.
In particular, if $\fa\subseteq\fb$, then
$\RG{\fb}{M}$ is $\fb$-adically semidualizing.
\end{thm}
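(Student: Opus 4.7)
The plan is to apply Theorem~\ref{lem151005a}(iii)$\Rightarrow$(i) to $\RG \fb M$ with respect to the ideal $\fa+\fb$. Thus two things must be verified: that $\supp_R(\RG \fb M) \subseteq \VE(\fa+\fb)$, and that $\LLL{a+b}{\RG \fb M}$ is semidualizing over $\Comp R{a+b}$.

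First I would establish $\RG \fb M \simeq \RG{a+b} M$. By Remark~\ref{rmk140204a}, $\supp_R(M) = \VE(\fa)$, so Fact~\ref{cor130528a} gives $M \simeq \RG a M$. Combined with the natural isomorphism $\RGno b \circ \RGno a \simeq \RGno{a+b}$ (obtained by deriving the module-level identity $\Gamma_{\fb} \circ \Gamma_{\fa} = \Gamma_{\fa+\fb}$), this yields $\RG \fb M \simeq \RG{\fb}{\RG a M} \simeq \RG{a+b} M$, and the support containment $\supp_R(\RG \fb M) \subseteq \VE(\fa+\fb)$ is then immediate. Applying the $\catd(\Comp R{a+b})$-enriched form of Fact~\ref{fact150626a} further gives $\LLL{a+b}{\RG \fb M} \simeq \LLL{a+b}{\RG{a+b} M} \simeq \LLL{a+b} M$.

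Now set $S := \Comp R a$ and $T := \Comp R{a+b}$, and note that $T$ is the $\fb S$-adic completion of $S$, so in particular $S \to T$ is flat. By Theorem~\ref{lem151005a} applied to $M$, the complex $C := \LLL a M$ is semidualizing over $S$; in particular $C \in \catdfb(S)$. I would next identify $\LLL{a+b} M \simeq \Lotimes[S]{T}{C}$ as objects of $\catd(T)$: at the $\catd(R)$ level this amounts to the nested-completion identity $\LLno b \circ \LLno a \simeq \LLno{a+b}$ (which in turn follows from the functorial identity $\LLno a \simeq \Rhom{\RG a R}{-}$ of Fact~\ref{fact130619b} together with $\Lotimes{\RG \fb R}{\RG \fa R} \simeq \RG{a+b} R$ and Hom-tensor adjunction), combined with Fact~\ref{fact130619b} applied over $S$ to $C \in \catdfb(S)$; the promotion to a $T$-linear isomorphism uses the enriched MGM machinery of~\cite{sather:elclh}. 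Finally, since $S \to T$ is flat and $C$ is semidualizing over $S$, the base change $\Lotimes[S]{T}{C}$ is semidualizing over $T$ by standard preservation of the semidualizing property under flat ring extensions. Combining these identifications verifies condition (iii) of Theorem~\ref{lem151005a} for $\RG \fb M$, so $\RG \fb M$ is $(\fa+\fb)$-adically semidualizing. The ``in particular'' case is immediate from $\fa+\fb=\fb$ when $\fa \subseteq \fb$.

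The main technical obstacle is the identification $\LLL{a+b} M \simeq \Lotimes[S]{T}{C}$ at the level of $\catd(T)$ rather than merely $\catd(R)$: one must promote the underlying completion-composition isomorphism to a $T$-linear statement, which requires the enriched versions of the MGM identities from the companion papers. Once this identification is in hand, Theorem~\ref{lem151005a} together with standard flat base change for semidualizing complexes finish the argument directly.
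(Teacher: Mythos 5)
Your approach is genuinely different from the paper's, and while it is conceptually attractive, it leaves the step you yourself flag as ``the main technical obstacle'' genuinely open.

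The paper applies Theorem~\ref{lem151005a} via characterization~(ii), not~(iii). It first replaces $\fb$ by $\fa+\fb$ exactly as you do, then invokes \cite[Theorem~7.10]{sather:scc} to get that $\RG bM$ is $\fb$-adically finite, and then it only needs to produce an isomorphism $\Comp Rb\simeq\Rhom{\RG bM}{\RG bM}$ in $\catd(R)$. That last isomorphism is obtained by an elementary chain working entirely in $\catd(R)$: $\Comp Rb\simeq\mathbf L\Lambda^{\fb}(\Comp Ra)\simeq\mathbf L\Lambda^{\fb}(\Rhom MM)\simeq\Rhom{\RG bR}{\Rhom MM}\simeq\Rhom{\Lotimes{M}{\RG bR}}{M}\simeq\Rhom{\RG bM}{M}\simeq\Rhom{\RG bM}{\RG bM}$, using only the $R$-level identities from Fact~\ref{fact130619b}, Hom-tensor adjointness, and \cite[Theorem~(0.3)$^*$]{lipman:lhcs}. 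The subtle point that makes (ii) work---that a bare isomorphism $\Comp Rb\simeq\Rhom{\RG bM}{\RG bM}$ in $\catd(R)$ already forces the homothety to be an isomorphism---is exactly the content of Theorem~\ref{lem151005a}, so this route deliberately avoids having to produce any enriched isomorphism over a completed ring.

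Your route instead goes through characterization~(iii), which requires showing that $\LLL{a+b}{\RG bM}\simeq\LLL{a+b}M$ is semidualizing over $T:=\Comp R{a+b}$, and you do this by identifying $\LLL{a+b}M\simeq\Lotimes[S]{T}{\LLL aM}$ in $\catd(T)$ (with $S:=\Comp Ra$) and then applying flat base change for semidualizing complexes (which is fine: that is Lemma~\ref{lem151012b}\eqref{lem151012b1}, or \cite[Theorem~5.6]{christensen:scatac}/\cite[Theorem~4.5]{frankild:rrhffd}). The gap is the identification itself. Your $\catd(R)$-level argument for $\LLno b\circ\LLno a\simeq\LLno{a+b}$ via $\Rhom{\RG\bullet R}{-}$ and $\Lotimes{\RG bR}{\RG aR}\simeq\RG{a+b}R$ is fine, as is the application of Fact~\ref{fact130619b} over $S$ giving $\mathbf L\widehat\Lambda^{\fb S}(\LLL aM)\simeq\Lotimes[S]T{\LLL aM}$ in $\catd(T)$. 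What is missing is the bridge between these two: an isomorphism $\LLL bM\simeq\mathbf L\widehat\Lambda^{\fb S}(\LLL aM)$ in $\catd(T)$, not merely after forgetting down to $\catd(R)$. The enriched results in \cite{sather:elclh} that the paper actually cites are single-ideal statements (e.g.\ $\LLL aM\simeq\LLa a{\LLL aM}$, or the base-change result \cite[Theorem~7.3]{sather:elclh} which changes the ring but keeps the ideal); none of them directly supplies the cross-ideal, enriched nested-completion identity you need. Deriving it is plausible but nontrivial---one cannot just iterate $\Lambda$ on a semi-flat resolution, since $\fa$-adic completion need not preserve flatness or semi-flatness---and it is precisely the kind of enriched bookkeeping the paper's proof is designed to avoid. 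So as written, the proposal has a genuine hole at the point you flagged, and the paper's choice of characterization~(ii) over~(iii) is not incidental: it is what keeps the whole argument at the $\catd(R)$ level.
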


\begin{proof}
Our assumptions imply that $\supp_R(M)\subseteq\VE(\fa)$, so the natural morphism $\RG aM\to M$ is an isomorphism
in $\catd(R)$ by Fact~\ref{cor130528a}. It follows that we have the following isomorphisms in $\catd(R)$:
$$\RG bM\simeq\RG{b}{\RG aM}\simeq\RG{a+b}M.$$
Thus, we replace $\fb$ with $\fa+\fb$ to assume that $\fa\subseteq\fb$.

By~\cite[Theorem~7.10]{sather:scc}, the fact that $M$ is $\fa$-adically semidualizing implies that $\RG{\fb}{M}$ is $\fb$-adically finite.
Thus,  it suffices by Theorem~\ref{lem151005a} to show that $\Comp{R}{b}\simeq\Rhom{\RG{b}{M}}{\RG{b}{M}}$ in $\catd(R)$. 

Since $\Comp Ra$ is flat over $R$,  the first isomorphism in the next sequence is by definition:
$$
\textbf{L}\Lambda^{\fb}(\Comp{R}{a})
\simeq\Lambda^{\fb}(\Comp{R}{a})
\simeq\Comp{R}{b}.
$$
The second isomorphism follows from the containment $\fa\subseteq\fb$ since $\Lambda^{\fb}(-)=\Comp{(-)}b$.
This explains the first isomorphism in $\catd(R)$ in the next sequence.
\begin{align*}
\Comp Rb
&\simeq\textbf{L}\Lambda^{\fb}(\Comp{R}{a}) \\
&\simeq  \textbf{L}\Lambda^{\fb}(\Rhom MM)\\
&\simeq\Rhom{\RG bR}{\Rhom MM} \\
&\simeq\Rhom{\Lotimes{M}{\RG bR}}{M} \\
&\simeq\Rhom{\RG bM}{M} \\
&\simeq\Rhom{\RG bM}{\RG bM}
\end{align*}
The second isomorphism follows from  the isomorphism $\widehat{R}^{\mathfrak{a}} \simeq \Rhom MM$.
The third and fifth isomorphisms are by Fact~\ref{fact130619b}, and the fourth one is Hom-tensor adjointness.
The last isomorphism is a consequence of~\cite[Theorem (0.3)$^*$]{lipman:lhcs}.
\end{proof}

\begin{cor}\label{thm130330b}
If $C$ is a semidualizing  $R$-complex, then the complex $\RG{\fa}{C}$ is $\fa$-adically semidualizing.
Hence, the complex $\RG{\fa}{R}$ is $\fa$-adically semidualizing.
\end{cor}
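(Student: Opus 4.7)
The plan is to derive this as a direct application of Theorem~\ref{thm140109a} combined with Proposition~\ref{prop130528b}. By Proposition~\ref{prop130528b}, a semidualizing $R$-complex $C$ is precisely a $0$-adically semidualizing $R$-complex. Thus, I can apply Theorem~\ref{thm140109a} with the ambient ideal taken to be the zero ideal (call it $\fa'=0$) and the auxiliary ideal taken to be $\fa$.

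For the first assertion, feeding $M=C$ and ``$\fb$''$=\fa$ into Theorem~\ref{thm140109a} with $\fa'=0$ gives the isomorphism $\RG{\fa}{C}\simeq\RG{0+\fa}{C}$ and the conclusion that $\RG{\fa}{C}$ is $(0+\fa)$-adically semidualizing, i.e., $\fa$-adically semidualizing. This is the desired statement. (Since $0\subseteq\fa$ trivially, the ``in particular'' clause of Theorem~\ref{thm140109a} applies directly, so no subtlety arises.)

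For the second assertion, I only need to observe that $R$ is semidualizing over itself: the homothety morphism $\chi^R_R\colon R\to\Rhom RR$ is an isomorphism by Hom-cancellation, and $R\in\catdfb(R)$. Now applying the first part of the corollary to $C=R$ yields that $\RG{\fa}{R}$ is $\fa$-adically semidualizing.

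There is essentially no obstacle here; the result is a pure specialization of Theorem~\ref{thm140109a}. The only thing to double-check is the identification of $0$-adic semidualizing complexes with semidualizing complexes, which is Proposition~\ref{prop130528b} and thus already in hand.
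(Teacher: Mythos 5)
Your proof is correct and matches the paper's argument exactly: identify semidualizing with $0$-adically semidualizing via Proposition~\ref{prop130528b}, then specialize Theorem~\ref{thm140109a} with the ambient ideal taken to be $0$ and the auxiliary ideal $\fb=\fa$. The paper states this more tersely but the reasoning is identical, including the second assertion being the case $C=R$.
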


\begin{proof}
Since  ``semidualizing'' is equivalent to ``$0$-adically semidualizing'', by Proposition~\ref{prop130528b}, the first conclusion follows from Theorem~\ref{thm140109a}.
The second conclusion is the special case $C=R$.
\end{proof}

\begin{disc}\label{disc151016b}
Alternately, one can obtain Corollary~\ref{thm130330b}
from  MGM equivalence~\ref{fact150626a}, as follows.
By~\cite[Theorem~7.10]{sather:scc}, we know that $\RG aC$ is $\fa$-adically finite, 
so it suffices by Theorem~\ref{lem151005a} to show that $\Comp{R}{a}\simeq\Rhom{\RG{a}{C}}{\RG{a}{C}}$ in $\catd(R)$. 
MGM equivalence provides the first isomorphism
in the following sequence:
$$\LL a{\RG aC}\simeq\LL aC\simeq\Lotimes{\Comp Ra}C.$$
The second isomorphism is from Fact~\ref{fact130619b}.
This explains the second isomorphism in the next sequence:
\begin{align*}
\Rhom{\RG aC}{\RG aC}
&\simeq\Rhom{C}{\LL a{\RG aC}}\\
&\simeq\Rhom{C}{\Lotimes{\Comp Ra}C}\\
&\simeq\Lotimes{\Comp Ra}{\Rhom{C}{C}}\\
&\simeq\Lotimes{\Comp Ra}{R}\\
&\simeq \Comp Ra.
\end{align*}
The first isomorphism follows from Fact~\ref{fact130619b} with Hom-tensor adjointness.
The third isomorphism is tensor-evaluation~\cite[Lemma 4.4(F)]{avramov:hdouc},
the fourth one is from the assumption $\Rhom CC\simeq R$,
and the fifth one is tensor-cancellation.
From this perspective, the fact that $\RG aR$ is $\fa$-adically semidualizing is even easier:
\begin{align*}
\Rhom{\RG aR}{\RG aR}
&\simeq\Rhom{R}{\LL a{\RG aR}}\\
&\simeq\LL a{\RG aR}\\
&\simeq\LL aR\\
&\simeq\Comp Ra.\qedhere
\end{align*}
\end{disc}

\section{Transfer of the Adic Semidualizing Property}\label{sec150527a}

This  section focuses on some transfer properties for adic semidualizing complexes, including Theorems~\ref{thm151203a} and~\ref{thm151203c} from the introduction.
In particular, it furthers the theme from Theorem~\ref{lem151005a}, which describes some of the interplay
between the semidualizing $\Comp Ra$-complexes and the $\fa$-adically semidualizing $R$-complexes.

\begin{notn}\label{notn151011a}
In this section, let $\vf\colon R\to S$ be a  homomorphism of commutative noetherian rings with $\fa S\neq S$,
and consider the forgetful functor $Q\colon\catd(S)\to\catd(R)$.
\end{notn}

\subsection*{Restriction of Scalars}

Note that each result of this subsection and the next one holds for the natural flat homomorphism $R\to\Comp Ra$,
moreover, for the map $R\to\Comp Rb$ for any ideal $\fb\subseteq\fa$.

\begin{prop}
\label{prop151011a}
Assume that $\vf$ is flat  and that the induced map $\Comp Ra\to\comp S^{\fa S}$ is an isomorphism.
Then an $S$-complex $Y\in\catd(S)$ is $\fa S$-adically semidualizing over $S$
if and only if $Q(Y)$ is $\fa$-adically semidualizing over $R$.
\end{prop}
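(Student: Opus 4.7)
The plan is to apply Theorem~\ref{lem151005a}\eqref{lem151005a3} on both sides of the equivalence. Thus $Y$ is $\fa S$-adically semidualizing over $S$ iff $\supp_S(Y)\subseteq\VE(\fa S)$ and $\LLLno{\fa S}Y$ is semidualizing over $\compsa$, while $Q(Y)$ is $\fa$-adically semidualizing over $R$ iff $\supp_R(Q(Y))\subseteq\VE(\fa)$ and $\LLL aQ(Y)$ is semidualizing over $\Comp Ra$. Since $\Comp Ra\cong\compsa$ by hypothesis, the proposition reduces to showing two sub-claims: (a) the two support conditions coincide, and (b) under the isomorphism of completed rings, the derived completions $\LLL aQ(Y)$ and $\LLLno{\fa S}Y$ are isomorphic as complexes equipped with their respective homothety structures.

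For (a), Fact~\ref{cor130528a} translates each support condition into a torsion condition on the homology modules of $Y$; these coincide, since an $S$-module is annihilated by a power of $\fa S$ iff it is annihilated (via $\vf$) by a power of $\fa$. For (b), I would fix a semi-flat resolution $F\xra\simeq Y$ over $S$. Flatness of $\vf$ ensures each $F_i$ is also $R$-flat, so $Q(F)$ is a semi-flat resolution of $Q(Y)$ over $R$. Because $\fa^n F_i=(\fa S)^n F_i$ for every $i$ and $n$, the classical completions $\Lambda^{\fa}(F)$ and $\Lambda^{\fa S}(F)$ are literally equal as chain complexes, and this equality is compatible with the $\Comp Ra$- and $\compsa$-actions under their identification. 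Consequently $\LLL aQ(Y)\simeq\LLLno{\fa S}Y$ in $\catd(\Comp Ra)=\catd(\compsa)$.

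Finally, the semidualizing property is intrinsic to the pair consisting of the complex and the acting ring: by naturality of the homothety construction in Definition~\ref{rmk140129a}, the morphism $\chi^{\Comp Ra}_{\LLL aQ(Y)}$ is identified with $\chi^{\compsa}_{\LLLno{\fa S}Y}$ under the shared resolution $F$, so one is an isomorphism iff the other is. The main obstacle I anticipate is precisely this last bookkeeping step, namely verifying that the degreewise equality $\Lambda^{\fa}(F_i)=\Lambda^{\fa S}(F_i)$ lifts to an equality of $\Comp Ra$-$\compsa$-structures on the derived completions (not merely an equality of underlying complexes). This is most cleanly done by working at the level of the resolution $F$ rather than in the derived category, after which Theorem~\ref{lem151005a} closes the argument.
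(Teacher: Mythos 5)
Your proposal is correct and follows essentially the same route as the paper: reduce via Theorem~\ref{lem151005a}(iii) on both sides, observe that the support conditions match, and show $\LLL a{Q(Y)}\simeq\mathbf{L}\widehat\Lambda^{\fa S}(Y)$ by choosing a semi-flat resolution $F$ of $Y$ over $S$ (which is also semi-flat over $R$ by flatness of $\vf$) and noting that $\fa$-adic and $\fa S$-adic completion agree on $S$-complexes. The only cosmetic difference is that the paper cites \cite[Lemma~5.3]{sather:afcc} for the equivalence of the two support conditions, whereas you re-derive it directly from Fact~\ref{cor130528a} and the observation that $\fa S$-torsion coincides with $\fa$-torsion under restriction of scalars; and the compatibility-of-ring-structures concern you flag at the end is handled in the paper by exactly the mechanism you propose (working at the resolution level), with the $\catd(\compsa)\to\catd(\Comp Ra)$ identification dismissed in a footnote since the hypothesis makes it an equivalence.
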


\begin{proof}
From~\cite[Lemma~5.3]{sather:afcc}, we know that $\supp_R(Q(Y))\subseteq\VE(\fa)$ if and only if
$\supp_S(Y)\subseteq\VE(\fa S)$.
Thus, we assume for the rest of this proof that $\supp_S(Y)\subseteq\VE(\fa S)$.
It follows that $Y$ is a $\fa S$-adically semidualizing over $S$ if and only if 
$\mathbf{L}\widehat\Lambda^{\mathfrak{a}S}(Y)$ is semidualizing over $\comp S^{\fa S}\cong\Comp Ra$, by Theorem~\ref{lem151005a};
and $Q(Y)$ is $\fa$-adically semidualizing over $R$ if and only if $\LLL a{Q(Y)}$ is semidualizing over $\Comp Ra$.
Thus, it suffices to show that we have $\mathbf{L}\widehat\Lambda^{\mathfrak{a}S}(Y)\simeq\LLL a{Q(Y)}$
in $\catd(\comp S^{\fa S})$.\footnote{Technically, we should  use the forgetful functor
$\catd(\comp S^{\fa S})\to\catd(\Comp Ra)$ here. However, since our isomorphism assumption implies that this is an equivalence,
we avoid the extra notation.}

Let $F\xra\simeq Y$ be a semi-flat resolution over $S$. 
Since $S$ is flat over $R$, this also yields a semi-flat resolution $Q(F)\xra\simeq Q(Y)$. 
Completing an $S$-complex with respect to $\fa$ is the same as completing it with respect to $\fa S$, 
so we have 
$$\LLL a{Q(Y)}\simeq\Lambda^{\fa}(Q(F))\cong\Lambda^{\fa S}(F)\simeq\mathbf{L}\widehat\Lambda^{\mathfrak{a}S}(Y)$$
in $\catd(\comp S^{\fa S})$, as desired.
\end{proof}

\begin{disc}\label{disc151011a}
It is worth noting that, even when the map $\vf$ is flat and local, the hypothesis $\Comp Ra\cong\comp S^{\fa S}$
is necessary for each implication in the previous result.
Indeed, using the natural maps $\vf$ where
$S=R[\![X]\!]$ or $R[\![X]\!]/(X^2)$, one implication fails for $Y=S$, and the other implication fails with $Y=R$.
\end{disc}

\subsection*{Extension of Scalars}

Our next Theorem is akin to~\cite[Theorem~5.6]{christensen:scatac} and~\cite[Theorem~4.5]{frankild:rrhffd}, 
which describe finite flat dimension base change for semidualizing complexes. 
First, we prove two lemmas.
Recall that the homomorphism $\vf$ is \emph{locally of finite flat dimension} if, for every prime $\ideal P\in\spec(S)$
the induced map $\vf_{\ideal P}\colon R_{\p}\to S_{\ideal P}$ has finite flat dimension where $\p$ is the contraction of $\ideal P$ in $R$.
The main point for introducing this notion is the following fact: if $\vf$ is of finite flat dimension,
(more generally, if it is locally of finite flat dimension--see the next lemma)
then the induced map $\Comp Ra\to\compsa$ is locally of finite flat dimension,
though it is not clear that it has finite flat dimension; see~\cite[Theorem~6.11(c)]{avramov:cmporh}.
\newcommand{\fP}{\ideal P}

\begin{lem}\label{lem151012a}
\begin{enumerate}[\rm(a)]
\item\label{lem151012a1}
If $\fd_R(S)<\infty$, then $\vf$ is locally of finite flat dimension.
\item\label{lem151012a2}
If for every maximal ideal $\ideal M\in\mspec(S)$
the induced map $\vf_{\ideal M}$ has finite flat dimension, then $\vf$ is locally of finite flat dimension.
\end{enumerate}
\end{lem}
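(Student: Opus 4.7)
The plan is to handle both parts by tracking a finite flat resolution through a two-step localization process: first a flat base change to $R_\p$, then a further localization of modules that is exact. In each part the key observation is that if $M$ is an $R_\p$-module with $\fd_{R_\p}(M) \leq n$, and $N = U^{-1}M$ is the localization of $M$ at some multiplicatively closed subset $U \subseteq S$ (acting through the $R_\p$-algebra structure of its coefficients), then $\fd_{R_\p}(N) \leq n$ as well, because a length-$n$ flat resolution of $M$ over $R_\p$ tensors with the flat $S_\p$-module $U^{-1}S_\p$ (which is exact) to give a length-$n$ flat resolution of $N$ over $R_\p$.

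For part \eqref{lem151012a1}, I fix $\fP\in\spec(S)$ with contraction $\p = \vf^{-1}(\fP)$, set $n = \fd_R(S) < \infty$, and pick a semi-flat resolution $F\xra\simeq S$ over $R$ with $F_i=0$ for $i>n$. Since $R\to R_\p$ is flat, the base change $R_\p\otimes_R F \xra\simeq R_\p\otimes_R S = S_{\vf(R\smallsetminus\p)}$ is a length-$n$ semi-flat resolution over $R_\p$. Since $S_\fP$ is a further localization of $R_\p\otimes_R S$ (invert the image of $S\smallsetminus \fP$, which already contains $\vf(R\smallsetminus\p)$), the observation above gives $\fd_{R_\p}(S_\fP)\leq n$, as needed.

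For part \eqref{lem151012a2}, given $\fP\in\spec(S)$ with contraction $\p$, I pick any maximal ideal $\ideal M\in\mspec(S)$ containing $\fP$ and set $\m = \vf^{-1}(\ideal M)$, so $\p\subseteq\m$ and $\fP\subseteq\ideal M$. By hypothesis $n := \fd_{R_\m}(S_\ideal M) < \infty$. Starting from a length-$n$ semi-flat resolution $G\xra\simeq S_\ideal M$ over $R_\m$, I base-change along the flat localization $R_\m\to R_\p$ to obtain a length-$n$ semi-flat resolution $R_\p\otimes_{R_\m}G\xra\simeq R_\p\otimes_{R_\m}S_\ideal M$ over $R_\p$. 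The condition $\vf^{-1}(\fP)=\p$ forces the image of $R_\m\smallsetminus \p R_\m$ in $S_\ideal M$ to be contained in $S_\ideal M\smallsetminus \fP S_\ideal M$, so $S_\fP$ is a localization of the $R_\p$-module $R_\p\otimes_{R_\m}S_\ideal M$ at (the image of) $S_\ideal M\smallsetminus \fP S_\ideal M$. Applying the observation again yields $\fd_{R_\p}(S_\fP)\leq n$.

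The routine calculations are all of the same nature: identify a localization-of-modules situation, then invoke exactness and flatness of localization to push a finite flat resolution through. The only real subtlety is bookkeeping the two different multiplicative sets (the one coming from $R$ versus the one coming from $S$) in each case, which is where I expect to spend the bulk of the careful writing; the flat-dimension inequality itself is an immediate consequence of the fact that localization as an $S_\p$-module preserves vanishing of $\Tor^{R_\p}$-modules.
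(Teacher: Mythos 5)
Your overall approach is correct and, for part~\eqref{lem151012a1}, essentially matches the paper's: both arguments run a flat resolution of $S$ through the base change $R\to R_\p$ and then observe that the further localization $S_\p\to S_\fP$ costs nothing in flat dimension. The paper packages this as the fact that a composite of ring maps of finite flat dimension again has finite flat dimension; you unwind the same fact directly. For part~\eqref{lem151012a2}, the paper takes a slicker route: it applies part~\eqref{lem151012a1} to the already-local map $\vf_{\ideal M}\colon R_\m\to S_{\ideal M}$ and then identifies $\vf_{\ideal P}$ with $(\vf_{\ideal M})_{\ideal P_{\ideal M}}$ via the prime correspondence for localization, rather than re-running the resolution argument from scratch as you do. Both are valid; the paper's bootstrap saves some bookkeeping.

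One point in your write-up does not hold as stated, however. You justify the key observation (``localizing $M$ at a multiplicative set $U\subseteq S$ does not increase $\fd_{R_\p}$'') by saying that a flat resolution $G\xra\simeq M$ over $R_\p$ ``tensors with the flat $S_\p$-module $U^{-1}S_\p$.'' This tensor product is not available: the terms $G_i$ are $R_\p$-modules, not $S_\p$-modules, so $G\otimes_{S_\p}U^{-1}S_\p$ is undefined; and $G\otimes_{R_\p}U^{-1}S_\p$ resolves $\Otimes[R_\p]{M}{U^{-1}S_\p}$, which is generally much larger than $U^{-1}M=\Otimes[S_\p]{M}{U^{-1}S_\p}$. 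The correct justification is the one you gesture at in your closing sentence: $U^{-1}M$ is a filtered colimit of copies of $M$ (as $R_\p$-modules, with transition maps given by multiplication by elements of $U$), and $\Tor^{R_\p}_i(-,N)$ commutes with filtered colimits, so the vanishing $\Tor^{R_\p}_i(M,N)=0$ for $i>n$ passes to $U^{-1}M$. With that replacement your argument goes through.
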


\begin{proof}
\eqref{lem151012a1}
Let $\ideal P\in\spec(S)$, and let $\p\in\spec(R)$ denote the contraction of $\ideal P$ in $R$.
We have $\fd_{R_\p}(S_\p)\leq\fd_R(S)<\infty$.
Since the induced local map $\vf_{\ideal P}\colon R_{\p}\to S_{\ideal P}$ is the composition of the natural maps $R_\p\to S_\p\to S_{\ideal P}$,
each of which finite flat dimension,
the map $\vf_{\ideal P}$ has finite flat dimension as well.

\eqref{lem151012a2}
Assume that for every maximal ideal $\ideal M\in\mspec(S)$
the induced map $\vf_{\ideal M}$ has finite flat dimension.
Given a prime $\ideal P\in\spec(S)$, let $\ideal M\in\mspec(S)$ be such that $\ideal P\subseteq\ideal M$.
The induced map $\vf_{\ideal M}$ has finite flat dimension by assumption, so it is locally of finite flat dimension by part~\eqref{lem151012a1}.
Under the prime correspondence for localization, the map $\vf_{\ideal P}$ corresponds to the map
$(\vf_{\ideal M})_{\ideal P_{\ideal M}}$, so it has finite flat dimension, as desired.
\end{proof}

For perspective and use in the next results, note that~\cite[Proposition~5.6(a)]{sather:afcc} shows that $\supp_R(S)=\im(\vf^*)$ where
$\vf^*\colon\spec(S)\to\spec(R)$ is the induced map.

\begin{lem}\label{lem151012b}
Assume that $\vf$ is locally of finite flat dimension, and let $C\in\catdf(R)$.
\begin{enumerate}[\rm(a)]
\item\label{lem151012b1}
If $C$ is a semidualizing $R$-complex such that $\Lotimes SC\in\catdb(S)$, 
then $\Lotimes SC$ is semidualizing over~$S$.
\item\label{lem151012b2}
The converse of part~\eqref{lem151012b1} holds when $\supp_R(S)\supseteq\mspec(R)$.
\end{enumerate}
\end{lem}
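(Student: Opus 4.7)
The plan is to reduce both parts to a classical finite flat dimension base change result for semidualizing complexes, e.g.,~\cite[Theorem~5.6]{christensen:scatac} or~\cite[Theorem~4.5]{frankild:rrhffd}, by localizing $S$ at its primes where the local finite flat dimension hypothesis on $\vf$ upgrades to a genuine finite flat dimension statement.

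For part~\eqref{lem151012b1}, the first step is to observe that $\Lotimes SC\in\catdfb(S)$: boundedness is hypothesized, and each $\HH_i(\Lotimes SC)$ is finitely generated over $S$ because $C\in\catdfb(R)$, e.g., by tensoring a degreewise-finite bounded above semi-projective resolution of $C$ with $S$. The second step is to show that the homothety $\chi^S_{\Lotimes SC}\colon S\to\Rhom[S]{\Lotimes SC}{\Lotimes SC}$ is an isomorphism in $\catd(S)$, which I plan to do by assembling the chain
$$\Rhom[S]{\Lotimes SC}{\Lotimes SC}\simeq\Rhom{C}{\Lotimes SC}\simeq\Lotimes S{\Rhom CC}\simeq\Lotimes SR\simeq S,$$
using Hom-tensor adjointness, tensor-evaluation~\cite[Lemma~4.4(F)]{avramov:hdouc}, the semidualizing property $\Rhom CC\simeq R$, and tensor-cancellation, in order. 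The composite recovers $\chi^S_{\Lotimes SC}$ because it is obtained by applying $\Lotimes S-$ to $\chi^R_C$ under these identifications.

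For part~\eqref{lem151012b2}, assume that $\Lotimes SC$ is semidualizing over $S$ and that $\mspec(R)\subseteq\supp_R(S)$. Since $C\in\catdfb(R)$ by hypothesis, to conclude $C$ is semidualizing over $R$ it suffices to show $(\chi^R_C)_\m$ is an isomorphism for each $\m\in\mspec(R)$. By~\cite[Proposition~5.6(a)]{sather:afcc}, the support hypothesis yields some $\ideal P\in\spec(S)$ with $\vf^{-1}(\ideal P)=\m$. Localizing produces a local homomorphism $R_\m\to S_{\ideal P}$ of finite flat dimension, together with the semidualizing $S_{\ideal P}$-complex $(\Lotimes SC)_{\ideal P}\simeq\Lotimes[R_\m]{S_{\ideal P}}{C_\m}$. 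The descent direction of the cited base change theorem then yields that $C_\m$ is semidualizing over $R_\m$, so $(\chi^R_C)_\m$ is an isomorphism, as desired.

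The main obstacle will be the tensor-evaluation step in part~\eqref{lem151012b1}, since we only have local finite flat dimension rather than $\fd_R(S)<\infty$. I plan to handle this by checking the composite morphism one prime at a time: at each $\ideal P\in\spec(S)$ with $\p:=\vf^{-1}(\ideal P)$ one has $\fd_{R_\p}(S_{\ideal P})<\infty$, so tensor-evaluation applies legitimately to the localized setting; and a morphism of $S$-complexes is an isomorphism if and only if it is one after localizing at every prime of $S$, so verifying the chain of isomorphisms pointwise is enough to conclude it globally.
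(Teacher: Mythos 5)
Your argument for part~\eqref{lem151012b1} is essentially sound and similar in spirit to the paper's: both proofs localize at a prime $\ideal P$ of $S$ so that the local finite-flat-dimension hypothesis becomes a genuine finite-flat-dimension statement over the local map $R_\p\to S_{\ideal P}$. The paper does this more efficiently by invoking the fact that the semidualizing property is local (\cite[Lemma~2.3]{frankild:rrhffd}) and then citing the base-change theorem \cite[Theorem~4.5]{frankild:rrhffd} directly, whereas you re-derive the relevant portion of that theorem via the adjointness/tensor-evaluation chain and then check tensor-evaluation prime-by-prime. This works, though you would need to verify carefully that the tensor-evaluation morphism commutes with localization (using a degreewise-finite, bounded-below semi-free resolution of $C$), a naturality check that the paper sidesteps by deferring to the cited result.

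Part~\eqref{lem151012b2} has a genuine gap. You write ``Since $C\in\catdfb(R)$ by hypothesis,'' but the hypothesis only gives $C\in\catdf(R)$; boundedness of $C$ is \emph{not} assumed in this direction. (In part~\eqref{lem151012b1} boundedness comes for free from the assumption that $C$ is semidualizing, but here $C$ is not yet known to be semidualizing — establishing that is the whole point.) The paper therefore devotes a separate Claim to proving $C\in\catdb(R)$ before localizing the homothety: it writes $\inf(C)=\inf\{\inf(C_\m)\mid\m\in\mspec(R)\}$ and $\sup(C)=\sup\{\sup(C_\m)\mid\m\in\mspec(R)\}$, picks for each $\m\in\mspec(R)\subseteq\supp_R(S)$ a prime $\ideal M\in\mspec(S)$ lying over $\m$, and uses \cite[Theorem~I(c)]{frankild:rrhffd} (valid since $\vf_{\ideal M}$ has finite flat dimension) to get $\inf(C_\m)=\inf((\Lotimes SC)_{\ideal M})\geq\inf(\Lotimes SC)>-\infty$, and dually for $\sup$. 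Without this step one cannot even form the homothety $\chi^R_C$ in $\catd(R)$ and invoke the locality criterion for the semidualizing property. Once the Claim is established, the rest of your argument — choosing a prime over $\m$ and descending along the local, finite-flat-dimension map $R_\m\to S_{\ideal P}$ via \cite[Theorem~4.5]{frankild:rrhffd} — is correct and matches the paper.
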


\begin{proof}
\eqref{lem151012b1}
Assume that $C$ is a semidualizing $R$-complex such that $\Lotimes SC\in\catdb(S)$.
Since this implies that $C\in\catdfb(R)$, it follows that we have $\Lotimes SC\in\catdfb(S)$.
The semidualizing property is local for such complexes~\cite[Lemma~2.3]{frankild:rrhffd}, 
so it suffices to show that $(\Lotimes SC)_{\ideal P}$ is semidualizing
over $S_{\ideal P}$ for each
prime $\ideal P\in\spec(S)$. In other words, 
we need to show that the following $S_{\ideal P}$-complex is semidualizing. 
$$\Lotimes[S]{S_{\fP}}{(\Lotimes SC)}\simeq\Lotimes[R_{\p}]{S_{\fP}}{(\Lotimes{R_{\p}}C)}$$
This is so by~\cite[Theorem~4.5]{frankild:rrhffd}, because the maps $R\to R_{\p}\to S_{\fP}$ each have finite flat dimension,
where $\p$ is the contraction of $\ideal P$ in $R$.

\eqref{lem151012b2}
Assume that
$\Lotimes SC$ is semidualizing over~$S$, and that we have $\supp_R(S)\supseteq\mspec(R)$.
In particular, this implies that $\Lotimes SC\in\catdb(R)$.

Claim: $C\in\catdb(R)$.\footnote{Note that if we had $\fd_R(S)<\infty$, this would follow from~\cite[Theorem~I]{frankild:rrhffd}.}
For this, we use a bit of bookkeeping notation from Foxby~\cite{foxby:ibcahtm}:
given an $R$-complex $Z$, set
\begin{align*}
\sup(Z)&:=\sup\{ i\in\bbz\mid\HH_i(Z)\neq 0\}\\
\inf(Z)&:=\inf\{ i\in\bbz\mid\HH_i(Z)\neq 0\}.
\end{align*}
with the conventions $\sup\emptyset=-\infty$ and $\inf\emptyset=\infty$.
Thus, to prove the claim, we need to show that $-\infty<\inf(C)$ and $\sup(C)<\infty$.
Let $\m\in\mspec(R)\subseteq\supp_R(S)=\vf^*(\spec(S))$. It follows that there is a maximal ideal
$\ideal M\in\mspec(S)$ such that $\m=\vf^{-1}(\ideal M)$. 
The induced map $\vf_{\ideal M}$ has finite flat dimension, so~\cite[Theorem~I(c)]{frankild:rrhffd}
explains the first step in the next sequence.
\begin{align*}
\inf(C_\m)
&=\inf(\Lotimes[R_\m]{S_{\ideal M}}{C_\m})
=\inf((\Lotimes SC)_{\ideal M})
\geq\inf(\Lotimes SC)
\end{align*} 
The other steps are straightforward. 
It follows that we have
$$\inf(C)=\inf\{\inf(C_\m)\mid\m\in\mspec(R)\}\geq\inf(\Lotimes SC)>-\infty.$$
For $\sup(C)$, we argue similarly:
\begin{gather*}
\sup(C_\m)\leq\sup(\Lotimes[R_\m]{S_{\ideal M}}{C_\m})
=\sup((\Lotimes SC)_{\ideal M})
\leq\sup(\Lotimes SC)
\\
\sup(C)=\sup\{\sup(C_\m)\mid\m\in\mspec(R)\}\leq\sup(\Lotimes SC)<\infty.
\end{gather*}
This establishes the Claim. 

To complete the proof, it suffices to show that $C_\m$ is semidualizing over $R_\m$ for all $\m\in\mspec(R)$. 
Let $\m\in\mspec(R)$ be given, and let $\ideal M\in\mspec(S)$ lie over $\m$. 
By assumption, the complex $\Lotimes SC$ is semidualizing over $S$,
so the localization $(\Lotimes SC)_{\ideal M}$ is semidualizing over $S_{\ideal M}$.
That is, the complex
$$\Lotimes[R_\m]{S_{\ideal M}}{(\Lotimes{R_\m}C)}\simeq\Lotimes[R_\m]{S_{\ideal M}}{C_\m}$$
is semidualizing over $S_{\ideal M}$. The induced map $\vf_{\ideal M}\colon R_\m\to S_{\ideal M}$ 
is local and has finite flat dimension, so  $C_\m$ is semidualizing over $R_{\m}$,
by~\cite[Theorem~4.5]{frankild:rrhffd}, as desired.
\end{proof}

In the next result, note that $\Lotimes SM\in\catdb(S)$ holds automatically if $\fd_R(S)<\infty$.

\begin{thm}\label{thm140217a}
Assume that $\vf$ is (locally) of finite flat dimension, and let $M\in\catdb(R)$ be given.
\begin{enumerate}[\rm(a)]
\item\label{thm140217a1}
If $M$ is $\fa$-adically semidualizing with $\Lotimes SM\in\catdb(S)$, then $\Lotimes SM$ is $\fa S$-adically semidualizing over $S$.
\item\label{thm140217a2}
The converse of part~\eqref{thm140217a1} holds if $\supp_R(S)\supseteq\VE(\fa)\bigcap\mspec(R)$ and $M$ is $\fa$-adically finite over $R$,
e.g., if $\vf$ is local and $M$ is $\fa$-adically finite over $R$.
\item\label{thm140217a3}
In particular, the converse of part~\eqref{thm140217a1} holds if $\supp_R(S)\supseteq\VE(\fa)\bigcup\supp_R(X)$ and $\vf$ is flat, e.g., $\vf$ is faithfully flat.
\end{enumerate}
\end{thm}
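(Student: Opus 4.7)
The plan is to reduce all three parts to the semidualizing base-change Lemma~\ref{lem151012b} applied to the induced ring map $\Comp Ra\to\compsa$. By Lemma~\ref{lem151012a} combined with the fact cited just above it, the hypothesis that $\vf$ is locally of finite flat dimension guarantees that $\Comp Ra\to\compsa$ is itself locally of finite flat dimension, so Lemma~\ref{lem151012b} is available on that side. The key technical preliminary I would establish is the base-change identification
\[
\mathbf{L}\widehat\Lambda^{\fa S}(\Lotimes SM)\simeq\Lotimes[\Comp Ra]{\compsa}{\LLL aM}
\]
in $\catd(\compsa)$, valid whenever $M$ is $\fa$-adically finite over $R$. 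I would prove this by using Fact~\ref{fact130619b} to rewrite $\mathbf{L}\widehat\Lambda^{\fa S}(\Lotimes SM)\simeq\Rhom[S]{\mathbf{R}\widehat\Gamma_{\fa S}(S)}{\Lotimes SM}$, invoking a \v{C}ech-type identification $\mathbf{R}\widehat\Gamma_{\fa S}(S)\simeq\Lotimes S{\RRG aR}$, applying Hom--tensor adjointness to land in $\Rhom{\RRG aR}{\Lotimes SM}$, and finishing with a tensor-evaluation step justified by replacing $\LLL aM$ with a semi-projective resolution by finitely generated free $\Comp Ra$-modules (available since $\LLL aM\in\catdfb(\Comp Ra)$).

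For part~\eqref{thm140217a1}, assume $M$ is $\fa$-adically semidualizing with $\Lotimes SM\in\catdb(S)$. Theorem~\ref{lem151005a} gives that $\LLL aM$ is semidualizing over $\Comp Ra$, and the base-change isomorphism forces $\Lotimes[\Comp Ra]{\compsa}{\LLL aM}\in\catdb(\compsa)$. Lemma~\ref{lem151012b}\eqref{lem151012b1} applied to $\Comp Ra\to\compsa$ then makes $\mathbf{L}\widehat\Lambda^{\fa S}(\Lotimes SM)$ semidualizing over $\compsa$. The support condition $\supp_S(\Lotimes SM)\subseteq\VE(\fa S)$ follows from $\supp_R(M)\subseteq\VE(\fa)$ together with~\cite[Proposition~5.6(a)]{sather:afcc}, so Theorem~\ref{lem151005a} completes the forward direction.

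For part~\eqref{thm140217a2}, assume the stated hypotheses and that $\Lotimes SM$ is $\fa S$-adically semidualizing. Theorem~\ref{lem151005a} together with the base-change isomorphism produces a semidualizing $\compsa$-complex $\Lotimes[\Comp Ra]{\compsa}{\LLL aM}$. To invoke Lemma~\ref{lem151012b}\eqref{lem151012b2} I would verify $\mspec(\Comp Ra)\subseteq\supp_{\Comp Ra}(\compsa)$: by Lemma~\ref{lem151205a}\eqref{lem151205a1} every such maximal ideal has the form $\m\Comp Ra$ for some $\m\in\mspec(R)\cap\VE(\fa)\subseteq\supp_R(S)$, and Lemma~\ref{lem151205a}\eqref{lem151205a2} combined with~\cite[Proposition~5.6(a)]{sather:afcc} transports this into the required support statement for $\compsa$ over $\Comp Ra$. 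Lemma~\ref{lem151012b}\eqref{lem151012b2} then forces $\LLL aM$ to be semidualizing over $\Comp Ra$, and Theorem~\ref{lem151005a} gives the conclusion. In the local case, the unique maximal ideal of $R$ contains $\fa$ and is the contraction of the maximal ideal of $S$, so the support hypothesis holds automatically. Part~\eqref{thm140217a3} follows from part~\eqref{thm140217a2} once $\fa$-adic finiteness of $M$ is recovered from $\fa S$-adic finiteness of $\Lotimes SM$; for this, I would mimic the argument in Lemma~\ref{lem151012b}\eqref{lem151012b2}, using $\supp_R(M)\subseteq\supp_R(S)$ and the flatness of $\vf$ to descend boundedness and degreewise finiteness maximal-ideal by maximal-ideal, the support condition $\supp_R(M)\subseteq\VE(\fa)$ from $\supp_S(\Lotimes SM)\subseteq\VE(\fa S)$, and the finite generation of $\Ext{i}{R/\fa}M$ from that of $\Ext[S]{i}{S/\fa S}{\Lotimes SM}$ via flat descent on $\supp_R(M)$.

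I expect the main obstacle to be the base-change isomorphism $\mathbf{L}\widehat\Lambda^{\fa S}(\Lotimes SM)\simeq\Lotimes[\Comp Ra]{\compsa}{\LLL aM}$: although the individual ingredients are standard, verifying $\compsa$-linearity and naturality of the comparison through the adjunctions and the tensor-evaluation step requires care, precisely because $M$ is only $\fa$-adically finite rather than finitely generated (so one cannot directly invoke the $\LL aX\simeq\Lotimes{\Comp Ra}X$ formula of Fact~\ref{fact130619b} for $M$ itself). A secondary difficulty is the descent of $\fa$-adic finiteness in part~\eqref{thm140217a3}, which demands careful local bookkeeping along the supports.
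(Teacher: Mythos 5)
Your overall strategy matches the paper's exactly: both reduce the problem to the semidualizing base-change Lemma~\ref{lem151012b} applied to the induced map $\Comp Ra\to\compsa$ via the comparison isomorphism $\Lotimes[\Comp Ra]{\compsa}{\LLL aM}\simeq\mathbf{L}\widehat\Lambda^{\fa S}(\Lotimes SM)$, then translate back and forth with Theorem~\ref{lem151005a}. The difference is one of packaging: where you propose to re-derive the supporting technical facts, the paper simply cites its own prerequisites, which also quietly closes two small gaps in your sketch. First, the base-change isomorphism is taken wholesale from~\cite[Theorem~7.3]{sather:elclh}; your proposed derivation via Hom-tensor adjointness plus tensor-evaluation is exactly where you flag trouble, and rightly so, since $\RG aR$ is not homologically finite, so the standard tensor-evaluation hypotheses are not met — this is precisely what the cited theorem is engineered to handle in the adically finite setting, so you should cite it rather than reprove it. Second, in part~(a) you assert that the base-change isomorphism ``forces'' $\Lotimes[\Comp Ra]{\compsa}{\LLL aM}\in\catdb(\compsa)$, but the isomorphism alone gives nothing about boundedness; what the paper actually does is invoke~\cite[Theorem~5.10]{sather:afcc} to conclude that $\Lotimes SM$ is $\fa S$-adically finite over $S$, which via Fact~\ref{thm130612a} gives the stronger $\catdfb(\compsa)$ membership needed for Lemma~\ref{lem151012b}\eqref{lem151012b1}. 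Third, for part~(c) the paper again cites~\cite[Theorem~5.10]{sather:afcc} for the descent of $\fa$-adic finiteness along a flat map with the stated support hypothesis, whereas you propose to re-derive this by a maximal-ideal-by-maximal-ideal argument modeled on Lemma~\ref{lem151012b}\eqref{lem151012b2}; your sketch is plausible but substantially longer, and it risks circularity with the finiteness statements that the cited result already packages. Your treatment of part~(b), including Claims~1 and~2 via Lemma~\ref{lem151205a}, mirrors the paper's proof precisely.
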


\begin{proof}
\eqref{thm140217a1}
Assume that $M$ is $\fa$-adically semidualizing.
In particular,  $M$ is $\fa$-adically finite over $R$, so~\cite[Theorem~5.10]{sather:afcc} 
implies that $\Lotimes SM$ is $\fa S$-adically finite over $S$, 
and by~\cite[Theorem~7.3]{sather:elclh} we have an isomorphism in $\catd(\compsa)$
$$\Lotimes[\Comp Ra]{\compsa}{\LLL aM}\simeq\mathbf{L}\widehat\Lambda^{\fa S}(\Lotimes SM).$$
The fact that $\Lotimes SM$ is $\fa S$-adically finite over $S$ tells us that the second displayed complex is in $\catdfb(\compsa)$,
hence so is the first.
Also, Theorem~\ref{lem151005a} implies that 
$\LLL aM$ is semidualizing over $\Comp Ra$.
Since the induced map $\Comp\vf a\colon\Comp Ra\to\compsa$ is locally of finite flat dimension, Lemma~\ref{lem151012b}\eqref{lem151012b1}
implies that
the first displayed complex is semidualizing over $\compsa$, hence so is the second one.
Another application of Theorem~\ref{lem151005a} tells us that $\Lotimes SM$ is $\fa S$-adically semidualizing over $S$, as desired.

\eqref{thm140217a2}
Claim 1: if $\vf$ is local, then we have $\supp_R(S)\supseteq\VE(\fa)\bigcap\mspec(R)$.
Indeed, if $\vf$ is local, then the maximal ideal $\m$ of $R$ satisfies $\m\in\vf^*(\spec(S))$; hence,
we have the second step in the next display.
$$\VE(\fa)\bigcap\mspec(R)=\{\m\}\subseteq\vf^*(\spec(S))=\supp_R(S)$$
The first step is from the assumption $\fa\neq R$, since $R$ is local here. 
The last step is from~\cite[Proposition~5.6(a)]{sather:afcc}.
The establishes  Claim~1. 

Now, to prove part~\eqref{thm140217a2}, assume that $\Lotimes SM$ is $\fa S$-adically semidualizing over $S$, we have
$\supp_R(S)\supseteq\VE(\fa)\bigcap\mspec(R)$, and $M$ is $\fa$-adically finite over $R$.
In particular, we have the isomorphism displayed in the previous paragraph. 
Another application of Theorem~\ref{lem151005a} tells us that 
$\mathbf{L}\widehat\Lambda^{\fa S}(\Lotimes SM)$
is semidualizing over $\compsa$, so the isomorphism implies that
$\Lotimes[\Comp Ra]{\compsa}{\LLL aM}$ is semidualizing over $\compsa$ a well.

Claim 2: $(\Comp\vf a)^*(\spec(\compsa))\supseteq\mspec(\Comp Ra)$.
Indeed, let $\m\in\mspec(\Comp Ra)$. It follows from Lemma~\ref{lem151205a}\eqref{lem151205a1} that
there is a maximal ideal $\m_0\in\mspec(R)\bigcap\VE(\fa)$ such that $\m=\m_0\Comp Ra$.
By assumption, there is a maximal ideal $\ideal M_0\in\mspec(S)$ such that $\m_0=\vf^{-1}(\ideal M_0)$. 
The condition $\m_0\supseteq\fa$ implies that $\ideal M_0\supseteq\m_0S\supseteq\fa S$.
An application of Lemma~\ref{lem151205a}\eqref{lem151205a1} to $S$ shows that the ideal $\ideal M:=\ideal M_0\compsa\supseteq\fa\compsa$
is maximal in $\compsa$ and contracts to $\ideal M_0$ in $S$. 
In particular, the prime ideal $\fP:=(\Comp\vf a)^{-1}(\ideal M)\supseteq\fa\Comp Ra$ contracts to $\m_0$ in $R$.
Since $\fP$ and $\m$ both are  in $\VE(\fa\Comp Ra)$ and contract to $\m_0$ in $R$,
Lemma~\ref{lem151205a}\eqref{lem151205a2} implies that $\fP=\m$.
This establishes Claim~2. 

Now we complete the proof of part~\eqref{thm140217a2}.
Recall that the induced map $\Comp Ra\to\compsa$ is locally of finite flat dimension. 
Since $M$ is $\fa$-adically finite over $R$, we have $\LLL aM\in\catdfb(R)$ and $\supp_R(M)\subseteq\VE(\fa)$.
Thus, Lemma~\ref{lem151012b}\eqref{lem151012b2} conspires with  Claim~2 to imply that
$\LLL aM$ is semidualizing over $\Comp Ra$, and
Theorem~\ref{lem151005a} implies that $M$ is $\fa$-adically semidualizing over $R$, as desired.

\eqref{thm140217a3}
Assume that $\supp_R(S)\supseteq\VE(\fa)\bigcup\supp_R(X)$ and $\vf$ is flat. 
If $\Lotimes SM$ is $\fa S$-adically semidualizing over $S$, then $\Lotimes SM$ is $\fa S$-adically finite over $S$,
so~\cite[Theorem~5.10]{sather:afcc} implies that $M$ is $\fa$-adically finite over $R$.
Thus, the desired converse follows from part~\eqref{thm140217a2}.
\end{proof}

The next result is a bit strange, since completions don't usually interact well with localization.

\begin{thm}\label{cor151122a}
Let $M\in\catdb(R)$ be $\fa$-adically finite. The following  are equivalent.
\begin{enumerate}[\rm(i)]
\item\label{cor151122a1}
$M$ is $\fa$-adically semidualizing.
\item\label{cor151122a2}
For each multiplicatively closed subset $U\subseteq R$ such that $\fa U^{-1}R\neq U^{-1}R$, the $U^{-1}R$-complex
$U^{-1}M\simeq\Lotimes{(U^{-1}R)}M$ is $U^{-1}\fa$-adically semidualizing.
\item\label{cor151122a3}
For all $\p\in\VE(\fa)$, the $R_\p$-complex $M_\p\simeq\Lotimes{R_\p}M$ is $\fa_\p$-adically semidualizing.
\item\label{cor151122a4}
For all $\m\in\VE(\fa)\bigcap\mspec(R)$, the $R_\m$-complex $M_\m\simeq\Lotimes{R_\m}M$ is $\fa_\m$-adically semidualizing.
\end{enumerate}
\end{thm}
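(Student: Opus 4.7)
The plan is to establish the cycle (i) $\Rightarrow$ (ii) $\Rightarrow$ (iii) $\Rightarrow$ (iv) $\Rightarrow$ (i). For (i) $\Rightarrow$ (ii), I would apply Theorem~\ref{thm140217a}\eqref{thm140217a1} to the flat homomorphism $R\to U^{-1}R$, noting that $\Lotimes{U^{-1}R}{M}\simeq U^{-1}M$ lies in $\catdb(U^{-1}R)$ by exactness of localization. Implication (ii) $\Rightarrow$ (iii) follows by specializing $U=R\smallsetminus\p$, since $\fa R_\p\neq R_\p$ exactly when $\p\in\VE(\fa)$. Implication (iii) $\Rightarrow$ (iv) is immediate.

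The substantive direction is (iv) $\Rightarrow$ (i). By Theorem~\ref{lem151005a}\eqref{lem151005a3}, the standing $\fa$-adic finiteness of $M$ reduces this to proving that $\LLL aM$ is semidualizing over $\Comp Ra$. The semidualizing property for complexes in $\catdfb$ is local at maximal ideals (by~\cite[Lemma~2.3]{frankild:rrhffd}, as used in the proof of Lemma~\ref{lem151012b}), so it suffices to show that $(\LLL aM)_{\fP}$ is semidualizing over $(\Comp Ra)_\fP$ for each $\fP\in\mspec(\Comp Ra)$. By Lemma~\ref{lem151205a}\eqref{lem151205a1} every such $\fP$ has the form $\m\Comp Ra$ for a uniquely determined $\m\in\VE(\fa)\cap\mspec(R)$; fix such $\m$.

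Applying Lemma~\ref{lem151205a}\eqref{lem151205a1} to the pair $(R_\m,\fa R_\m)$ shows that $\comp{R_\m}^{\fa_\m}$ is itself local with maximal ideal $\m\comp{R_\m}^{\fa_\m}$. The natural ring map $\Comp Ra\to\comp{R_\m}^{\fa_\m}$ induced by $R\to R_\m$ sends elements outside $\m\Comp Ra$ to units (since both residue fields are $R/\m$), so it factors through a local ring map $(\Comp Ra)_{\m\Comp Ra}\to\comp{R_\m}^{\fa_\m}$; the preamble to Lemma~\ref{lem151012a} (citing~\cite[Theorem~6.11(c)]{avramov:cmporh}) shows this map is locally of finite flat dimension. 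Specializing the base-change isomorphism from~\cite[Theorem~7.3]{sather:elclh} used in the proof of Theorem~\ref{thm140217a} to $S=R_\m$, together with tensor cancellation along this map, yields
$$\Lotimes[(\Comp Ra)_{\m\Comp Ra}]{\comp{R_\m}^{\fa_\m}}{(\LLL aM)_{\m\Comp Ra}}\simeq\Lotimes[\Comp Ra]{\comp{R_\m}^{\fa_\m}}{\LLL aM}\simeq\mathbf{L}\widehat\Lambda^{\fa_\m}(M_\m).$$
Hypothesis~\eqref{cor151122a4} with Theorem~\ref{lem151005a}\eqref{lem151005a3} says the right-hand side is semidualizing over $\comp{R_\m}^{\fa_\m}$. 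Then Lemma~\ref{lem151012b}\eqref{lem151012b2}, applied to the local ring map above (with the support hypothesis $\supp\supseteq\mspec$ automatic by locality), reflects the semidualizing property down to $(\LLL aM)_{\m\Comp Ra}$ over $(\Comp Ra)_{\m\Comp Ra}$, completing the argument.

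The main obstacle, as the author's remark that completions do not usually interact well with localization foreshadows, is the technical verification that the natural map $(\Comp Ra)_{\m\Comp Ra}\to\comp{R_\m}^{\fa_\m}$ is well-defined, locally of finite flat dimension, and compatible with the derived adic completion of $M$ as required in the display; once these points are secured, the locality of the semidualizing property at maximal ideals together with Lemma~\ref{lem151012b}\eqref{lem151012b2} delivers the conclusion.
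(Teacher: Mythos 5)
Your proof is correct and mirrors the paper's structure in its essentials: both reduce to showing $(\text{iv})\Rightarrow(\text{i})$, both translate to proving that $\LLL aM$ is semidualizing over $\Comp Ra$ via Theorem~\ref{lem151005a}, both check this locally at each $\ideal M=\m\Comp Ra\in\mspec(\Comp Ra)$, and both funnel through the ring map into $\widehat{R_\m}^{\fa_\m}$ together with the base-change identity $\mathbf{L}\widehat\Lambda^{\fa_\m}(M_\m)\simeq\Lotimes[(\Comp Ra)_{\ideal M}]{\widehat{R_\m}^{\fa_\m}}{\LLL aM_{\ideal M}}$ from~\cite[Theorem~7.3]{sather:elclh}.

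The one genuine divergence is in the final descent step. The paper invokes~\cite[Corollaire~0.7.6.14]{grothendieck:ega1} to establish that $\Comp Ra\to\widehat{R_\m}^{\fa_\m}$ is actually \emph{flat}, so the induced map $(\Comp Ra)_{\ideal M}\to\widehat{R_\m}^{\fa_\m}$ is flat and local, and descent of the semidualizing property then comes straight from~\cite[Theorem~5.6]{christensen:scatac}. You instead content yourself with the weaker ``locally of finite flat dimension'' conclusion (which, as you note, is exactly what the remark preceding Lemma~\ref{lem151012a} supplies, via~\cite[Theorem~6.11(c)]{avramov:cmporh}) and then lean on the paper's own Lemma~\ref{lem151012b}\eqref{lem151012b2}, observing that the support hypothesis $\supp\supseteq\mspec$ is automatic for a local map. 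Both routes work. The paper's is more direct -- flatness is a stronger and cleaner input, and the EGA citation settles it in one stroke -- while yours is more self-contained, reusing the locally-finite-flat-dimension machinery the paper has already built and sidestepping the need to know the sharper flatness fact. Your explicit argument that $\Comp Ra\to\widehat{R_\m}^{\fa_\m}$ factors through $(\Comp Ra)_{\ideal M}$ (via the residue-field comparison) is correct and is a detail the paper states without spelling out.
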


\begin{proof}
In view of Theorem~\ref{thm140217a}\eqref{thm140217a1}, it suffices to prove the implication~\eqref{cor151122a4}$\implies$\eqref{cor151122a1}.
Assume that for each $\m\in\VE(\fa)\bigcap\mspec(R)$, the $R_\m$-complex $M_\m$ is $\fa_\m$-adically semidualizing.
Since $M$ is also assumed to be $\fa$-adically finite, to prove that is it $\fa$-adically semidualizing, it suffices by Theorem~\ref{lem151005a}
to show that $\LLL aM$ is semidualizing over $\Comp Ra$.
By assumption, we have $\LLL aM\in\catdfb(\Comp Ra)$, so to show that this complex is semidualizing over $\Comp Ra$,
it suffices to show that for each $\ideal M\in\mspec(\Comp Ra)$ the localization $\LLL aM_{\ideal M}$ is semidualizing over $(\Comp Ra)_{\ideal M}$; see~\cite[Lemma~2.3]{frankild:rrhffd}.

Let $\ideal M\in\mspec(\Comp Ra)$ be given, and 
let $\m$ be the contraction of $\ideal M$ in $R$.
By assumption, the $R_\m$-complex $M_\m$ is $\fa_\m$-adically semidualizing.
Thus, Theorem~\ref{lem151005a} implies that the complex 
$\mathbf{L}\widehat{\Lambda}^{\fa_\m}(M_\m)$ is semidualizing over $\widehat{R_\m}^{\fa_\m}$.

According to~\cite[Corollaire~0.7.6.14]{grothendieck:ega1}, the natural map 
$\Comp Ra\to\widehat{R_\m}^{\fa_\m}$ is flat. 
The ring $\widehat{R_\m}^{\fa_\m}$ is local with maximal ideal $\m \widehat{R_\m}^{\fa_\m}$, and the contraction of this maximal ideal in
$\Comp Ra$ is $\m\Comp Ra=\ideal M$.
It follows that the induced map $(\Comp Ra)_{\ideal M}\to\widehat{R_\m}^{\fa_\m}$ is flat and local. 
As we have already seen, the next  complex  is semidualizing over $\widehat{R_\m}^{\fa_\m}$.
\begin{align*}
\mathbf{L}\widehat{\Lambda}^{\fa_\m}(M_\m)
&\simeq\mathbf{L}\widehat{\Lambda}^{\fa R_\m}(\Lotimes{R_\m}M)\\
&\simeq\Lotimes[\Comp Ra]{\widehat{R_\m}^{\fa R_\m}}{\LLL aM}\\
&\simeq\Lotimes[(\Comp Ra)_{\ideal M}]{\widehat{R_\m}^{\fa R_\m}}{(\Lotimes[\Comp Ra]{(\Comp Ra)_{\ideal M}}{\LLL aM})}\\
&\simeq\Lotimes[(\Comp Ra)_{\ideal M}]{\widehat{R_\m}^{\fa_\m}}{\LLL aM_{\ideal M}}
\end{align*}
The second isomorphism here is by~\cite[Theorem~7.3]{sather:elclh} with $S=R_\m$.
Since we have $\LLL aM_{\ideal M}\in\catdfb(\widehat{R_\m}^{\fa_\m})$, it follows from~\cite[Theorem~5.6]{christensen:scatac} that $\LLL aM_{\ideal M}$
is semidualizing over $\widehat{R_\m}^{\fa_\m}$as desired.
\end{proof}

\subsection*{Extended Derived Local Cohomology}
We next consider the behavior of adic semidualizing complexes with respect to the functor $\RRGno a$.

\begin{prop} \label{prop150619b}
An $R$-complex $M\in\catdb(R)$ is $\fa$-adically semidualizing over $R$
if and only if $\supp_R(M)\subseteq\VE(\fa)$ and
$\RRG aM$ is $\fa\Comp Ra$-adically semidualizing over $\Comp Ra$.
\end{prop}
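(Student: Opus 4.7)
The plan is to apply Theorem~\ref{lem151005a} to both sides of the equivalence and check that both invocations reduce to the single statement: $\supp_R(M)\subseteq\VE(\fa)$ and $\LLL aM$ is semidualizing over $\Comp Ra$.

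First, by Theorem~\ref{lem151005a} applied to $M$ over $R$ with ideal $\fa$, the condition that $M$ is $\fa$-adically semidualizing over $R$ is equivalent to the combined condition that $\supp_R(M)\subseteq\VE(\fa)$ and that $\LLL aM$ is semidualizing over $\Comp Ra$.

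Next, I analyze when $\RRG aM$ is $\fa\Comp Ra$-adically semidualizing over $\Comp Ra$. The ring $\Comp Ra$ is already complete with respect to $\fa\Comp Ra$, so the canonical map identifies $\widehat{\Comp Ra}^{\fa\Comp Ra}$ with $\Comp Ra$. Therefore, applying Theorem~\ref{lem151005a} over $\Comp Ra$ with ideal $\fa\Comp Ra$ to the complex $\RRG aM$, the condition that $\RRG aM$ is $\fa\Comp Ra$-adically semidualizing over $\Comp Ra$ is equivalent to: $\supp_{\Comp Ra}(\RRG aM)\subseteq\VE(\fa\Comp Ra)$ and $\mathbf{L}\widehat\Lambda^{\fa\Comp Ra}(\RRG aM)$ is semidualizing over $\Comp Ra$. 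The support condition here is automatic since $\RRG aM$ has $\fa\Comp Ra$-torsion homology (apply Fact~\ref{cor130528a} over $\Comp Ra$). Moreover, using the isomorphism $\RRG aM\simeq\RGa a{\LLL aM}$ established in the proof of Lemma~\ref{lem151004a}\eqref{lem151004a3}, together with MGM equivalence (Fact~\ref{fact150626a}) applied over $\Comp Ra$, one obtains the natural isomorphism
$$\mathbf{L}\widehat\Lambda^{\fa\Comp Ra}(\RRG aM)\simeq\LLa a{\RGa a{\LLL aM}}\simeq\LLa a{\LLL aM}\simeq\LLL aM$$
in $\catd(\Comp Ra)$, where the final step uses the isomorphism $\LLL aM\simeq\LLa a{\LLL aM}$ from~\cite[Theorem~4.3]{sather:elclh} cited in Lemma~\ref{lem151004a}\eqref{lem151004a3}.

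Combining these observations, the right-hand condition (that $\supp_R(M)\subseteq\VE(\fa)$ and $\RRG aM$ is $\fa\Comp Ra$-adically semidualizing over $\Comp Ra$) reduces exactly to ($\supp_R(M)\subseteq\VE(\fa)$ and $\LLL aM$ is semidualizing over $\Comp Ra$), which is precisely the condition characterizing $M$ as $\fa$-adically semidualizing over $R$. The main obstacle is verifying the identification $\mathbf{L}\widehat\Lambda^{\fa\Comp Ra}(\RRG aM)\simeq\LLL aM$ in the hatted setting; once the relevant MGM isomorphisms from~\cite{sather:elclh} are assembled, the rest is a bookkeeping exercise comparing the two applications of Theorem~\ref{lem151005a}.
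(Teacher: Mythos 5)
Your argument is correct, but it takes a genuinely different route from the paper. The paper proves the two implications asymmetrically by invoking the already-established transfer results: the forward direction uses the isomorphism $\RRG aM\simeq\Lotimes{\Comp Ra}M$ (valid because $\supp_R(M)\subseteq\VE(\fa)$ gives $M\simeq\RG aM$) together with the base-change Theorem~\ref{thm140217a}\eqref{thm140217a1}, and the reverse direction uses $M\simeq Q(\RRG aM)$ together with the restriction-of-scalars Proposition~\ref{prop151011a}. You instead apply the characterization in Theorem~\ref{lem151005a}\eqref{lem151005a3} symmetrically to $M$ over $R$ and to $\RRG aM$ over the complete ring $\Comp Ra$, and then reconcile the two resulting criteria by directly establishing $\mathbf{L}\widehat\Lambda^{\fa\Comp Ra}(\RRG aM)\simeq\LLL aM$ via the MGM-type isomorphisms from~\cite{sather:elclh}. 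In effect you are inlining (for $S=\Comp Ra$) the key isomorphism $\Lotimes[\Comp Ra]{\compsa}{\LLL aM}\simeq\mathbf{L}\widehat\Lambda^{\fa S}(\Lotimes SM)$ that underlies the proof of Theorem~\ref{thm140217a}, rather than citing that theorem. The paper's route is shorter given the infrastructure already in place and handles both implications with a single uniform mechanism (the bijection machinery eventually packaged in Theorem~\ref{thm151012a}); your route is more self-contained modulo Theorem~\ref{lem151005a} and makes transparent exactly which MGM identities are doing the work. One small thing to keep explicit: the isomorphism $\RRG aM\simeq\RGa a{\LLL aM}$ and its consequences require $\supp_R(M)\subseteq\VE(\fa)$, which in the forward direction is extracted from the $\fa$-adic finiteness of $M$ and in the reverse direction is an explicit hypothesis; you do use it in both places, so the logic is sound, but it is worth flagging since the hypothesis is consumed before the two applications of Theorem~\ref{lem151005a} can be compared.
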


\begin{proof}
For the forward implication, assume that $M$ is $\fa$-adically semidualizing over $R$.
In particular, we have $\supp_R(M)\subseteq\VE(\fa)$, hence $M\simeq\RG aM$ by Fact~\ref{cor130528a}.
From Fact~\ref{fact130619b}
we therefore have 
isomorphisms
$$\RRG aM\simeq \Lotimes{\Comp Ra}{\RG aM}\simeq \Lotimes{\Comp Ra}M$$ 
in $\catd(\Comp Ra)$.
Thus, Theorem~\ref{thm140217a}\eqref{thm140217a1} 
implies that $\RRG aM$ is $\fa\Comp Ra$-adically semidualizing over $\Comp Ra$, as desired.

For the converse, assume that $\supp_R(M)\subseteq\VE(\fa)$ and that $\RRG aM$ is $\fa\Comp Ra$-adically semidualizing over $\Comp Ra$.
The condition $\supp_R(M)\subseteq\VE(\fa)$ implies that we have
$M\simeq \RG aM\simeq Q(\RRG aM)$ in $\catd(R)$, by Facts~\ref{cor130528a} and~\ref{fact130619b}, respectively.
Proposition~\ref{prop151011a} implies that $M$ is $\fa$-adically semidualizing over $R$, as desired.
\end{proof}

\begin{disc}\label{disc151204c}
Some of our results become trivial when $R$ is $\fa$-adically complete. 
For instance, if $R$ is $\fa$-adically complete, then the conclusion of the previous result says that $M$ is $\fa$-adically semidualizing
if and only if $\supp_R(M)\subseteq\VE(\fa)$ and $M$ is $\fa$-adically semidualizing.
Indeed, the completeness assumption implies  $\RRGno a=\RGno a$; so
if $\supp_R(M)\subseteq\VE(\fa)$, e.g., if $M$ is $\fa$-adically semidualizing, then this says that $\RRG aM\simeq\RG aM\simeq M$
by Fact~\ref{cor130528a}. Similar comments apply to our next result.

On the other hand, other results of this section have cleaner (and non-trivial) statements when one assumes that $R$ is $\fa$-adically complete. 
We include a few of these explicitly below.
\end{disc}

Our next result contains part of Theorem~\ref{thm151203c} from the introduction.

\begin{thm}\label{thm151012a}
The functor $\RRGno a$ induces a bijection $\mathfrak{S}^\fa(R)\to\s^{\fa\Comp Ra}(\Comp Ra)$ with inverse induced by
the forgetful functor
$Q\colon\catd(\Comp Ra)\to\catd(R)$.
Also, the bijection $\RRGno a\colon\mathfrak{S}^\fa(R)\to\s^{\fa\Comp Ra}(\Comp Ra)$
is the same as the base-change map 
$\Lotimes{\Comp Ra}-\colon\mathfrak{S}^\fa(R)\to\s^{\fa\Comp Ra}(\Comp Ra)$
from Theorem~\ref{thm140217a}.
\end{thm}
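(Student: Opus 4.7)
My plan is to split the statement into four ingredients: well-definedness of each of the two maps, the inverse relations in both directions, and the identification of $\RRGno a$ with base change. Because both $\RRGno a$ and $Q$ are triangulated and commute with $\shift$, everything descends to shift-isomorphism classes, so I will only track objects.

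Well-definedness of $\RRGno a\colon\s^\fa(R)\to\s^{\fa\Comp Ra}(\Comp Ra)$ is the forward implication of Proposition~\ref{prop150619b}. For the reverse direction, I would apply Proposition~\ref{prop151011a} to the natural flat homomorphism $R\to\Comp Ra$; its hypothesis holds because $\Comp Ra$ is already $\fa\Comp Ra$-adically complete, so the induced completion map is an isomorphism. Hence $Q$ sends $\s^{\fa\Comp Ra}(\Comp Ra)$ into $\s^\fa(R)$.

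Next I verify that $Q$ and $\RRGno a$ are mutually inverse. The easy direction is $Q\circ\RRGno a\simeq\id$: for $M\in\s^\fa(R)$, Remark~\ref{rmk140204a} gives $\supp_R(M)\subseteq\VE(\fa)$, so Facts~\ref{cor130528a} and~\ref{fact130619b} yield $Q(\RRG aM)\simeq\RG aM\simeq M$ in $\catd(R)$. The other direction, $\RRG a{Q(N)}\simeq N$ for $N\in\s^{\fa\Comp Ra}(\Comp Ra)$, is the main technical obstacle. My strategy is to apply Lemma~\ref{lem151008a} to $Q(N)$, whose small $R$-support lies in $\VE(\fa)$ by Remark~\ref{rmk140204a}, producing a bounded above semi-injective resolution $Q(N)\xra\simeq J$ over $R$ in which each $J_i$ is an $\fa$-torsion injective $\Comp Ra$-module and the differentials are $\Comp Ra$-linear. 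Such a $J$ is simultaneously a bounded above complex of injective $\Comp Ra$-modules, hence a semi-injective resolution of $N$ over $\Comp Ra$. Because each $J_i$ is $\fa$-torsion, one has $\Gamma_\fa(J)=J$, and therefore
\[
\RRG a{Q(N)}\simeq\Gamma_\fa(J)=J\simeq N
\]
in $\catd(\Comp Ra)$, as desired.

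For the final assertion that $\RRGno a$ coincides with the base-change functor $\Lotimes{\Comp Ra}-$ on $\s^\fa(R)$, I would invoke the isomorphisms $\RRG aM\simeq\Lotimes{\Comp Ra}{\RG aM}\simeq\Lotimes{\Comp Ra}M$ recorded in the proof of Proposition~\ref{prop150619b}; these hold whenever $\supp_R(M)\subseteq\VE(\fa)$, which is automatic for every $M\in\s^\fa(R)$. The principal subtlety of the whole argument is the dual role of the resolution $J$ over both $R$ and $\Comp Ra$, enabled by the structural refinement in Lemma~\ref{lem151008a}; once this is justified, the remaining steps are bookkeeping with the naturality of the standard transformations.
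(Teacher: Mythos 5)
Your well-definedness claims, your easy direction $Q\circ\RRGno a\simeq\id$, and your identification of $\RRGno a$ with $\Lotimes{\Comp Ra}-$ all match the paper's approach or are correct variants. However, your argument for the harder direction $\RRG a{Q(N)}\simeq N$ has a genuine gap, and it is precisely the point where the paper instead invokes [sather:elclh, Theorem~4.12] (the MGM-type equivalence between $\catdator$ and $\catdaator$ induced by $Q$ and $\RRGno a$).

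The problem is your assertion that $J$ is ``a semi-injective resolution of $N$ over $\Comp Ra$.'' Lemma~\ref{lem151008a} hands you a bounded-above complex $J$ of $\fa$-torsion injective $\Comp Ra$-modules together with an \emph{$R$-linear} quasiisomorphism $\alpha\colon Q(N)\to J$. The differentials of $J$ are $\Comp Ra$-linear because the $J_i$ are $\fa$-torsion, but $\alpha$ itself has no reason to be $\Comp Ra$-linear: the modules of (a representative of) $N$ need not be $\fa$-torsion, only its homology, so the automatic $R$-linear $\Rightarrow$ $\Comp Ra$-linear implication for maps between $\fa$-torsion modules does not apply to $\alpha_i\colon N_i\to J_i$. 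Consequently, knowing $Q(N)\simeq J$ in $\catd(R)$ does not by itself yield $N\simeq J$ in $\catd(\Comp Ra)$; the forgetful functor $Q$ is conservative but not full on all of $\catd(\Comp Ra)$, so it does not reflect isomorphisms of objects that are merely $R$-quasiisomorphic. What rescues the conclusion is exactly the full faithfulness of $Q$ restricted to $\catdaator$ (both $N$ and $J$ lie there), which is the content of the cited equivalence theorem; once you invoke that, the $R$-linear isomorphism $Q(N)\simeq J$ lifts uniquely to a $\Comp Ra$-linear one. In short, your direct resolution-based strategy is a reasonable alternative to the paper's appeal to the equivalence, but as written it silently assumes the very full-faithfulness that needs to be cited, so you should either quote [sather:elclh, Theorem~4.12] at that step or supply an independent proof that $\Hom_{\catd(\Comp Ra)}(N,J)\to\Hom_{\catd(R)}(Q(N),J)$ is bijective for torsion complexes.
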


\begin{proof}
Propositions~\ref{prop151011a} and~\ref{prop150619b} show that 
$Q$ and $\RRGno a$ induce well-defined maps $\s^{\fa\Comp Ra}(\Comp Ra)\to\mathfrak{S}^\fa(R)$
and $\mathfrak{S}^\fa(R)\to\s^{\fa\Comp Ra}(\Comp Ra)$.
Also,  $Q$ and $\RRGno a$ induce
inverse equivalences between $\catdator$ and $\catdaator$,
by~\cite[Theorem~4.12]{sather:elclh}; as these contain the $\fa$-adic semidualizing $R$-complexes
and the $\fa \Comp Ra$-adic semidualizing $\Comp Ra$-complexes, respectively, the  maps 
$\s^{\fa\Comp Ra}(\Comp Ra)\to\mathfrak{S}^\fa(R)$
and $\mathfrak{S}^\fa(R)\to\s^{\fa\Comp Ra}(\Comp Ra)$
are inverse bijections.
Lastly, the proof of Proposition~\ref{prop150619b} shows that the maps $\mathfrak{S}^\fa(R)\to\s^{\fa\Comp Ra}(\Comp Ra)$
induced by $\RGno a$ and $\Lotimes{\Comp Ra}-$ are equal.
\end{proof}

\begin{cor}\label{prop151013a}
Assume that $\vf$ is flat  and that the induced map $\Comp Ra\to\comp S^{\fa S}$ is an isomorphism.
Then the forgetful functor $Q$ induces a bijection $\s^{\fa S}(S)\to\s^{\fa}(R)$ with inverse induced by
the functor $\Lotimes S-$.
\end{cor}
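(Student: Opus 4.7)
The strategy is direct: under the hypothesis $\Comp Ra\cong\compsa$, derived tensor with $S$ over $R$ acts as the identity on $\fa$-torsion complexes, which collapses both compositions $Q\circ(\Lotimes S-)$ and $(\Lotimes S-)\circ Q$ to the identity. First, Proposition~\ref{prop151011a} shows that $Q\colon\s^{\fa S}(S)\to\s^\fa(R)$ is well defined, and Theorem~\ref{thm140217a}\eqref{thm140217a1} shows that $\Lotimes S-\colon\s^\fa(R)\to\s^{\fa S}(S)$ is well defined: $\vf$ flat is (locally) of finite flat dimension by Lemma~\ref{lem151012a}\eqref{lem151012a1}, and flatness of $S$ over $R$ gives $\Lotimes SM\in\catdb(S)$ automatically for $M\in\catdb(R)$.

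The central technical claim is: if $X\in\catdb(R)$ or $\catdb(S)$ has $\fa$-torsion (equivalently $\fa S$-torsion) homology, then the natural map $X\to\Lotimes[R]{S}{X}$ is an isomorphism in $\catd(R)$ and in $\catd(S)$. Two ingredients combine: (a) reducing both sides of $\Comp Ra\cong\compsa$ modulo $\fa^n$ yields $R/\fa^n\cong S/\fa^n S$ for every $n\geq 1$; and (b) flatness of $\vf$ gives $\Lotimes[R]{S}{X}\simeq S\otimes_R X$ degree-wise, with $H_*(S\otimes_R X)\cong S\otimes_R H_*(X)$. For any $\fa$-torsion $R$-module $N$, ingredient (a) forces the natural map $N\to S\otimes_R N$ to be an isomorphism, by reducing each cyclic submodule modulo the appropriate $\fa^n$ and using the identification $(S/\fa^n S)\otimes_{R/\fa^n}(-)\cong(R/\fa^n)\otimes_{R/\fa^n}(-)$. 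Combined with (b), this promotes the module-level statement to the required quasi-isomorphism.

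Granted the claim, both compositions collapse. For $Y\in\s^{\fa S}(S)$, the complex $Y$ is $\fa S$-torsion by Remark~\ref{rmk140204a}, so $\Lotimes S{Q(Y)}=\Lotimes SY\simeq Y$ in $\catd(S)$. Symmetrically, for $M\in\s^\fa(R)$, $M$ is $\fa$-torsion by Remark~\ref{rmk140204a}, so $Q(\Lotimes SM)=\Lotimes SM\simeq M$ in $\catd(R)$. Hence $Q$ and $\Lotimes S-$ are mutually inverse bijections. The main obstacle is the proof of the central claim: while the module-level computation $S\otimes_R N\cong N$ is transparent from $R/\fa^n\cong S/\fa^n S$, the derived-level lift requires combining flatness of $\vf$ (which bypasses the need to correct the derived tensor) with a passage from homology to bounded complexes with torsion homology.
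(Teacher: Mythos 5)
Your proof is correct, and it takes a genuinely different route from the paper. The paper's proof routes through the completed categories: it invokes Theorem~\ref{thm151012a} to obtain bijections $\s^\fa(R)\to\s^{\fa\Comp Ra}(\Comp Ra)$ and $\s^{\fa S}(S)\to\s^{\fa\compsa}(\compsa)$, identifies the bottom row $\s^{\fa\Comp Ra}(\Comp Ra)\leftrightarrow\s^{\fa\compsa}(\compsa)$ as a bijection via the isomorphism $\Comp Ra\cong\compsa$, and then performs a diagram chase to transfer bijectivity and inverse-ness to the top row. Your argument bypasses that machinery entirely: you show directly that the unit $X\to\Lotimes SX$ is an isomorphism for any bounded complex with $\fa$-torsion homology, by writing the derived tensor as an underived one (flatness), identifying $H_i(S\otimes_R X)\cong S\otimes_R H_i(X)$, and then checking the module-level unit $N\to S\otimes_R N$ is an isomorphism for $\fa$-torsion $N$ by exhausting $N$ by its $\fa^n$-torsion submodules and invoking $R/\fa^n\cong S/\fa^nS$ (which follows from the completion hypothesis, just as the paper observes in its proof of Corollary~\ref{cor151013a}). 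This collapses both compositions to the identity immediately. What the paper's approach buys is integration with the larger picture (the commuting cube eventually displayed in Remark~\ref{disc151015a}); what yours buys is a shorter, more self-contained argument that isolates the underlying phenomenon — flat base change to $S$ with $R/\fa^n\cong S/\fa^nS$ is invisible on $\fa$-torsion objects — and actually proves the stronger statement that $Q$ and $\Lotimes S-$ are inverse equivalences on the full subcategories of bounded complexes with $\fa$-torsion homology, of which the corollary is a restriction. One small remark: when checking that the unit $Y\to\Lotimes S{Q(Y)}$ is an isomorphism in $\catd(S)$ rather than merely $\catd(R)$, you should note (as you implicitly do) that for an $S$-module $T$ the relevant map $T\to S\otimes_R T$, $t\mapsto 1\otimes t$, is already $S$-linear, so being an $R$-linear isomorphism suffices.
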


\begin{proof}
Proposition~\ref{prop151011a} and Theorem~\ref{thm140217a}\eqref{thm140217a1} show that the functors $Q$ and $\Lotimes S-$
induce well-defined functions $\s^{\fa S}(S)\to\s^{\fa}(R)\to\s^{\fa S}(S)$.
Also, the next diagrams commute, where the unspecified maps are given by the respective forgetful functors.
$$\xymatrix@C=15mm{
\s^{\fa}(R)\ar[r]^-{\Lotimes S-}\ar[d]_{\Lotimes{\Comp Ra}-}^\simeq
&\s^{\fa S}(S)\ar[d]^{\Lotimes[S]{\compsa}-}_\simeq
\\
\s^{\fa\Comp Ra}(\Comp Ra)\ar[r]^-{\Lotimes[\Comp Ra]{\compsa}{-}}_-\simeq
&\s^{\fa\compsa}(\compsa)
}
\qquad
\xymatrix@C=15mm{
\s^{\fa S}(S)\ar[r]^-{Q}
&\s^{\fa}(R)
\\
\s^{\fa\compsa}(\compsa)\ar[r]_-\simeq\ar[u]^\simeq
&\s^{\fa\Comp Ra}(\Comp Ra)\ar[u]^\simeq
}$$
The vertical bijections are from Theorem~\ref{thm151012a}, 
and the horizontal ones are from our completion assumption.
It follows that the upper horizontal maps are bijections as well. 
Furthermore, since three of the four forgetful maps are the inverses of the corresponding base change maps,
one uses the diagrams to show  that the upper horizontal maps compose to the respective identities, as desired.
\end{proof}

For perspective in the next result, recall that~\cite[Theorem~6.1]{sather:afcc} shows that every $\fa$-adically finite $R$-complex $X$ satisfies $\fd_R(X)=\pd_R(X)$.

\begin{cor}\label{cor151013a}
Assume  $\vf$ is flat  and  the induced map $\Comp Ra\to\comp S^{\fa S}$ is bijective.
\begin{enumerate}[\rm(a)]
\item\label{cor151013a1}
Given an $\fa$-adic semidualizing complex $M\in\s^{\fa}(R)$, we have
\begin{align*}
\id_R(M)&=\id_S(\Lotimes SM)
&\fd_R(M)&=\fd_S(\Lotimes SM)
\end{align*}
\item\label{cor151013a2}
Given an $\fa S$-adic semidualizing complex $N\in\s^{\fa S}(S)$, we have
\begin{align*}
\id_R(Q(N))&=\id_S(N)
&\fd_R(Q(N))&=\fd_S(N)
\end{align*}
\end{enumerate}
\end{cor}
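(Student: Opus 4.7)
The plan is to derive both parts (a) and (b) from a common dimension-preservation principle for the completion functor, then exploit the hypothesis $\Comp Ra\cong\compsa$ to chain equalities between invariants computed over $R$ and over $S$.

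First, I would observe that parts (a) and (b) are equivalent via the inverse bijections of Corollary~\ref{prop151013a}: given $N\in\s^{\fa S}(S)$, setting $M:=Q(N)\in\s^{\fa}(R)$ yields $\Lotimes{S}{M}\simeq N$ in $\catd(S)$ (up to shift), so part (a) applied to $M$ produces part (b) for $N$, and vice versa. Hence it suffices to establish (a).

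The key intermediate claim is: for any $\fa$-adically finite $R$-complex $X$ with $\supp_R(X)\subseteq\VE(\fa)$, one has
$$\id_R(X)=\id_{\Comp Ra}(\LLL aX)\qquad\text{and}\qquad\fd_R(X)=\fd_{\Comp Ra}(\LLL aX).$$
For the flat dimension, the observation is that $\LLL aX\simeq\Lotimes{\Comp Ra}{X}$ restricts to $X$ in $\catd(R)$ (since $\Comp Ra$ is flat over $R$ and $\Otimes{\Comp Ra}{X}\cong X$ for $\fa$-torsion $X$); then a bounded semi-flat $R$-resolution $F\to X$ becomes, after base change to $\Comp Ra$, a bounded semi-flat $\Comp Ra$-resolution of $\LLL aX$ of the same length, while conversely any bounded semi-flat $\Comp Ra$-resolution is semi-flat over $R$ because flat $\Comp Ra$-modules are flat over $R$. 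For injective dimension, I would use Lemma~\ref{lem151008a}: a minimal injective $R$-resolution of an $\fa$-torsion complex consists of summands $E_R(R/\ideal p)$ with $\ideal p\supseteq\fa$, each of which is injective over $\Comp Ra$ by~\cite[Theorem~1]{foxby:imufbc}, and any map between $\fa$-torsion modules is automatically $\Comp Ra$-linear, so a single resolution computes $\id$ simultaneously over both rings (an analogous argument over $\Comp Ra$, using Lemma~\ref{lem151205a} to match primes in $\VE(\fa)$ with those in $\VE(\fa\Comp Ra)$, yields the reverse inequality).

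To conclude (a), I would apply the intermediate claim to $X=M$ over $R$ and to $X=\Lotimes{S}{M}$ over $S$ (the latter is $\fa S$-adically finite by Theorem~\ref{thm140217a}\eqref{thm140217a1}). The isomorphism $\Lotimes[\Comp Ra]{\compsa}{\LLL aM}\simeq\mathbf{L}\widehat\Lambda^{\fa S}(\Lotimes SM)$ from~\cite[Theorem~7.3]{sather:elclh} (as used in the proof of Theorem~\ref{thm140217a}) combines with the hypothesis $\Comp Ra\cong\compsa$ to yield $\LLL aM\simeq\mathbf{L}\widehat\Lambda^{\fa S}(\Lotimes SM)$ in $\catd(\Comp Ra)$, so the corresponding completed invariants coincide. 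Chaining the equalities then gives both $\id_R(M)=\id_S(\Lotimes SM)$ and $\fd_R(M)=\fd_S(\Lotimes SM)$, establishing (a). The main obstacle will be the infinite-dimension case of the intermediate claim, where the minimal-resolution argument only shows that finiteness is preserved; to handle this cleanly I would supplement it with the $\Ext$- and $\Tor$-module characterizations of the two dimensions, which are intrinsic to the complex in $\catd(-)$ and therefore agree across the completion for $\fa$-adically finite complexes.
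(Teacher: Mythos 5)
Your overall strategy — reduce to a dimension‐preservation statement for the passage from $R$ to $\Comp Ra$, then use the hypothesis $\Comp Ra\cong\compsa$ to chain equalities through the common completion — is a reasonable alternative to the paper's argument, which compares $R$ and $S$ directly by restriction and base change without passing through $\Comp Ra$. However, there is a genuine error in the statement of your intermediate claim that undermines the chain.

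You assert that for $\fa$-adically finite $X$ with $\supp_R(X)\subseteq\VE(\fa)$ one has $\id_R(X)=\id_{\Comp Ra}(\LLL aX)$ and $\fd_R(X)=\fd_{\Comp Ra}(\LLL aX)$, and you justify it by identifying $\LLL aX$ with $\Lotimes{\Comp Ra}{X}$ and noting that the latter restricts to $X$. But these two functors disagree on the complexes at issue. The isomorphism $\LLL aX\simeq\Lotimes{\Comp Ra}{X}$ of Fact~\ref{fact130619b} holds only for $X\in\catdf_+(R)$; an $\fa$-adically finite $X$ with support in $\VE(\fa)$ is generally not homologically finite over $R$. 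What is true for such $X$ is $\RRG aX\simeq\Lotimes{\Comp Ra}{X}$, and it is this complex whose restriction is $X$ (since $Q\circ\RRGno a\simeq\RGno a$ and $\supp_R(X)\subseteq\VE(\fa)$ gives $\RG aX\simeq X$). In contrast $Q(\LLL aX)\simeq\LL aX$, which is not $X$. The claim as written is false: take $(R,\m,k)$ local non-artinian, $\fa=\m$, and $X=E_R(k)$; then $\id_R(X)=0$, yet by Theorem~\ref{cor150525axx}\eqref{cor150525axx1} the complex $\LLL mX$ is a dualizing complex over $\Comp Rm$, which has strictly positive injective dimension. Indeed the whole point of $\LLLno a$ here (Theorem~\ref{thm140204a}, Corollary~\ref{cor150525a}) is that it sends $\fa$-adically finite complexes to homologically finite $\Comp Ra$-complexes, so it necessarily changes injective and flat dimensions.

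Your proof sketch for the claim — base change a bounded semi-flat resolution along the flat map $R\to\Comp Ra$, and use that the minimal injective resolution of an $\fa$-torsion complex consists of $E_R(R/\p)$'s with $\p\supseteq\fa$ which remain injective over $\Comp Ra$ by Foxby's theorem — actually proves the corrected statement $\id_R(X)=\id_{\Comp Ra}(\RRG aX)$ and $\fd_R(X)=\fd_{\Comp Ra}(\RRG aX)$ (i.e., Corollary~\ref{cor151013b}, which the paper obtains as a consequence of the present result). If you replace $\LLLno a$ with $\RRGno a$ throughout, and correspondingly replace the isomorphism $\LLL aM\simeq\mathbf{L}\widehat\Lambda^{\fa S}(\Lotimes SM)$ with $\RRG aM\simeq\mathbf{R}\widehat\Gamma_{\fa S}(\Lotimes SM)$ (both being $\Lotimes[\Comp Ra]{\compsa}{\Lotimes{\Comp Ra}{M}}$ under the identification $\Comp Ra\cong\compsa$), the argument goes through and gives an alternative route to the paper's direct one. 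As written, though, the key claim is false and the justification offered is for a different claim than the one stated.
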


\begin{proof}
Note that the fact that the map $\Comp Ra\to\compsa$ is an isomorphism implies that the 
same is true of the lower horizontal map in the next commutative diagram
$$\xymatrix{
R/\fa \ar[r]\ar[d]_\cong
&S/\fa S\ar[d]^{\cong}
\\
\Comp Ra/\fa \Comp Ra\ar[r]^\cong
&\compsa/\fa \compsa
}$$
It follows that the upper horizontal map is also an isomorphism.
Consequently, for each $\p\in\VE(\fa)$, the induced map $R/\p\to S/\p S$ is an isomorphism,
hence so is the map $\kappa(\p)\to\Otimes S{\kappa(\p)}$.

Let $M\in\s^{\fa}(R)$ and $N\in\s^{\fa S}(S)$ be given.
We prove the injective dimension formulas; the flat dimension formulas are verified similarly.

The inequality $\id_R(Q(N))\leq\id_S(N)$ holds because the flatness of $\vf$ implies that any
(bounded) semi-injective resolution of $N$ over $S$ restricts to a (bounded) semi-injective resolution of $Q(N)$ over $R$. 

Next, we verify the inequality $\id_S(\Lotimes SM)\leq\id_R(M)$.
For this argument, assume without loss of generality that $\id_R(M)<\infty$.
Then the minimal semi-injective resolution $M\xra\simeq J$ over $R$ is  bounded with minimal length,
since it is a direct summand of every semi-injective resolution of $M$. 
Furthermore, each module $J_i$ is a direct sum of $R$-modules of the form $E_R(R/\p)$ with $\p\in\supp_R(M)\subseteq\VE(\fa)$;
see~\cite[Proposition~3.8]{sather:scc}. Since $\vf$ is flat, we have $\Lotimes SM\simeq\Otimes SJ$ in $\catd(S)$,
so it suffices to show that each module $\Otimes S{J_i}$ is injective over $S$.
This follows from~\cite[Theorem~1]{foxby:imufbc}:
for each $\p\in\ass_R(J_i)\subseteq\supp_R(M)\subseteq\VE(\fa)$, the map $\kappa(\p)\to\Otimes S{\kappa(\p)}$ is an isomorphism, 
so the $S$-module $\Otimes S{J_i}$ is injective.

By Corollary~\ref{prop151013a}, we have $Q(\Lotimes SM)\simeq M$, so the previous two paragraphs (with $N=\Lotimes SM$)
imply that 
$$\id_R(M)=\id_R(Q(\Lotimes SM))\leq\id_S(\Lotimes SM)\leq\id_R(M)$$
so we have the first formula in part~\eqref{cor151013a1}.
The first formula in part~\eqref{cor151013a2} follows similarly using the isomorphism $N\simeq\Lotimes S{(Q(N))}$
from Corollary~\ref{prop151013a}.
\end{proof}

For convenience, we state the special case $S=\Comp Ra$ of the previous result next.

\begin{cor}\label{cor151013b}
Consider the forgetful functor
$Q\colon\catd(\Comp Ra)\to\catd(R)$.
\begin{enumerate}[\rm(a)]
\item\label{cor151013b1}
Given an $\fa$-adic semidualizing complex $M\in\s^{\fa}(R)$, we have
\begin{align*}
\id_R(M)&=\id_{\Comp Ra}(\RRG aM)
&\fd_R(M)&=\fd_{\Comp Ra}(\RRG aM)
\end{align*}
\item\label{cor151013b2}
Given an $\fa {\Comp Ra}$-adic semidualizing complex $N\in\s^{\fa {\Comp Ra}}({\Comp Ra})$, we have
\begin{align*}
\id_R(Q(N))&=\id_{\Comp Ra}(N)
&\fd_R(Q(N))&=\fd_{\Comp Ra}(N)
\end{align*}
\end{enumerate}
\end{cor}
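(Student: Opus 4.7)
The plan is to deduce this corollary directly from Corollary~\ref{cor151013a} by specializing to the natural map $\vf\colon R\to \Comp Ra$. The two hypotheses of Corollary~\ref{cor151013a} are that $\vf$ is flat and that the induced map $\Comp Ra\to \compsa$ is an isomorphism. The flatness of $\vf$ is standard for the $\fa$-adic completion of a noetherian ring. When $S=\Comp Ra$, we have $\compsa = \widehat{(\Comp Ra)}^{\fa\Comp Ra}$, and since $\Comp Ra$ is already $\fa\Comp Ra$-adically complete, the natural map $\Comp Ra\to \widehat{(\Comp Ra)}^{\fa\Comp Ra}$ is an isomorphism, so the second hypothesis holds as well.

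Next, I would identify the two base-change functors. For $M\in\s^{\fa}(R)$, Corollary~\ref{cor151013a}\eqref{cor151013a1} gives $\id_R(M)=\id_S(\Lotimes SM)$ and $\fd_R(M)=\fd_S(\Lotimes SM)$, with $S=\Comp Ra$. It therefore remains to replace $\Lotimes{\Comp Ra}M$ by $\RRG aM$. This is exactly the content of the last assertion of Theorem~\ref{thm151012a}, which states that the bijection $\RRGno a\colon \mathfrak{S}^\fa(R)\to\s^{\fa\Comp Ra}(\Comp Ra)$ agrees with the base-change map $\Lotimes{\Comp Ra}-$; that is, $\RRG aM\simeq \Lotimes{\Comp Ra}M$ in $\catd(\Comp Ra)$ for every $\fa$-adic semidualizing $M$. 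Substituting this isomorphism into the displayed equalities yields part~\eqref{cor151013b1}.

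For part~\eqref{cor151013b2}, I would simply invoke Corollary~\ref{cor151013a}\eqref{cor151013a2} in the same specialization $S=\Comp Ra$; no further translation is needed, since the statement there already reads verbatim as the desired equalities $\id_R(Q(N))=\id_{\Comp Ra}(N)$ and $\fd_R(Q(N))=\fd_{\Comp Ra}(N)$ for $N\in\s^{\fa\Comp Ra}(\Comp Ra)$.

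There is no real obstacle: the bulk of the work has already been carried out in Corollaries~\ref{prop151013a} and~\ref{cor151013a} (flatness, injectivity preservation under $\Otimes S-$, and the equality of base change with $\RRGno a$), and the only genuine verification is the trivial observation that $\Comp Ra$ is its own $\fa\Comp Ra$-adic completion.
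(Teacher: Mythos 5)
Your proposal is correct and matches the paper's own argument: specialize Corollary~\ref{cor151013a} to $S=\Comp Ra$ (noting that $\Comp Ra$ is its own $\fa\Comp Ra$-adic completion, so the hypotheses hold) and then use Theorem~\ref{thm151012a} to identify $\Lotimes{\Comp Ra}M$ with $\RRG aM$. The only difference is that you spell out the hypothesis-checking that the paper leaves implicit.
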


\begin{proof}
By Theorem~\ref{thm151012a}, we have $\Lotimes{\Comp Ra}M\simeq\RRG aM$,
so the desired conclusions follow from
Corollary~\ref{cor151013a}.
\end{proof}

\subsection*{Extended Derived Local Homology}

We now investigate the interaction between the adic semidualizing property and the functor $\LLLno a$, building on Theorem~\ref{lem151005a}.
Our next result contains the rest of Theorem~\ref{thm151203c} from the introduction.

\begin{thm}\label{thm140204a}
Consider the forgetful functor
$Q\colon\catd(\Comp Ra)\to\catd(R)$.
Then $\LLLno a$ induces a bijection $\mathfrak{S}^\fa(R)\to\s(\Comp Ra)$ with inverse induced by
$Q\circ\RGano a\simeq\RGno a\circ Q$.
\end{thm}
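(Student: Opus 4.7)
The plan is to factor the functor $\LLLno a$ through the bijection of Theorem~\ref{thm151012a}, thereby reducing the problem to Theorem~\ref{lem151005a} applied over the already-complete ring $\Comp Ra$. The key observation is that for any $M\in\s^\fa(R)$, one has $\LLL aM\simeq\LLa a{\RRG aM}$ in $\catd(\Comp Ra)$: by~\cite[Lemmas~4.4--4.5]{sather:elclh} we have $\RRG aM\simeq\RGa a{\LLL aM}$; applying $\LLano a$ and invoking MGM equivalence on $\Comp Ra$ (namely $\LLano a\circ\RGano a\simeq\LLano a$) gives $\LLa a{\RRG aM}\simeq\LLa a{\LLL aM}$, and the latter is $\simeq\LLL aM$ by~\cite[Theorem~4.3]{sather:elclh}. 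Consequently, on $\s^\fa(R)$ the functor $\LLLno a$ factors as
\[
\s^\fa(R)\xrightarrow{\RRGno a}\s^{\fa\Comp Ra}(\Comp Ra)\xrightarrow{\LLano a}\s(\Comp Ra),
\]
and I will establish the theorem by showing each factor is a bijection.

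The first arrow is the bijection of Theorem~\ref{thm151012a}, whose inverse is induced by $Q$. For the second arrow, I apply Theorem~\ref{lem151005a} and Corollary~\ref{thm130330b} with base ring $\Comp Ra$ and ideal $\fa\Comp Ra$: the former shows that $\LLano a$ lands in $\s(\Comp Ra)$, while the latter shows that $\RGano a$ lands in $\s^{\fa\Comp Ra}(\Comp Ra)$. To verify $\RGano a\circ\LLano a\simeq\id$ on $\s^{\fa\Comp Ra}(\Comp Ra)$, I combine Remark~\ref{rmk140204a} with Fact~\ref{cor130528a} (giving $M'\simeq\RGa a{M'}$ for $M'\in\s^{\fa\Comp Ra}(\Comp Ra)$) and the MGM isomorphism $\RGano a\circ\LLano a\simeq\RGano a$. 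For the other composition $\LLano a\circ\RGano a\simeq\id$ on $\s(\Comp Ra)$, MGM provides $\LLa a{\RGa a{N}}\simeq\LLa a{N}$, and then $\LLa a{N}\simeq N$ for $N\in\catdfb(\Comp Ra)$ follows from Fact~\ref{fact130619b} applied over $\Comp Ra$, since $\widehat{\Comp Ra}^{\fa\Comp Ra}\cong\Comp Ra$.

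Composing the two bijections, the inverse of $\LLLno a\colon\s^\fa(R)\to\s(\Comp Ra)$ is $Q\circ\RGano a$; the natural equivalence $Q\circ\RGano a\simeq\RGno a\circ Q$ is immediate from Fact~\ref{fact130619b}. I expect the main technical point to be the completeness identity $\LLa a{N}\simeq N$ for $N\in\catdfb(\Comp Ra)$: it is precisely this step where the semidualizing hypothesis enters through finite generation of homology (over the complete ring $\Comp Ra$), rather than through adic-finiteness alone, and without it the candidate inverse on the $\Comp Ra$-side would fail to exist.
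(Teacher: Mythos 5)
Your argument is correct, but it takes a genuinely different route from the paper's. The paper invokes the full equivalence of categories from~\cite[Theorem~6.3(b)]{sather:elclh} (between the category of $\fa$-adically finite $R$-complexes and $\catdfb(\Comp Ra)$, with quasi-inverses $\LLLno a$ and $Q\circ\RGano a$) and then simply checks that these restrict between $\s^\fa(R)$ and $\s(\Comp Ra)$ using Theorem~\ref{lem151005a}, Corollary~\ref{thm130330b}, and Proposition~\ref{prop151011a}. You instead sidestep that equivalence theorem entirely: you factor $\LLLno a\simeq\LLano a\circ\RRGno a$ on $\s^\fa(R)$ (a content claim, correctly established via~\cite[Lemmas~4.4--4.5, Theorem~4.3]{sather:elclh} and MGM), then reuse Theorem~\ref{thm151012a} for the first leg and prove the second leg $\LLano a\colon\s^{\fa\Comp Ra}(\Comp Ra)\to\s(\Comp Ra)$ by hand from MGM equivalence together with the identity $\LLa aN\simeq N$ for $N\in\catdfb(\Comp Ra)$. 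Your route is more granular and makes visible exactly where the homologically-finite-over-the-complete-ring hypothesis is used (your closing remark about this is apt), at the cost of a longer chain of reductions; the paper's route is shorter because it outsources the heavy lifting to the cited equivalence. One small inaccuracy to fix: the isomorphism $Q\circ\RGano a\simeq\RGno a\circ Q$ is not a consequence of Fact~\ref{fact130619b} (that fact gives $Q\circ\RRGno a\simeq\RGno a$, a different compatibility); the correct reference is~\cite[Corollary~4.14]{sather:elclh}, which is what the paper cites for precisely this.
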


\begin{proof}
The isomorphism $Q\circ\RGano a\simeq\RGno a\circ Q$ is from~\cite[Corollary~4.14]{sather:elclh}.

By Theorem~\ref{lem151005a},
the functor $\LLLno a$ induces a well-defined function $\mathfrak{S}^\fa(R)\to\s(\Comp Ra)$.
On the other hand, Corollary~\ref{thm130330b} and Proposition~\ref{prop151011a} show that the functors
$\RGano a$ and $Q$ induce well-defined functions $\s(\Comp Ra)\to\s^{\fa\Comp Ra}(\Comp Ra)\to\s^\fa(R)$.

Furthermore, from~\cite[Theorem~6.3(b)]{sather:elclh}, the functor $\LLLno a$ induces an equivalence 
between the category of $\fa$-adically finite $R$-complexes and the category $\catdfb(\Comp Ra)$,
with quasi-inverse induced by $Q\circ\RGano a$.
Since the $\fa$-adically semidualizing $R$-complexes are $\fa$-adically finite over $R$, and the semidualizing $\Comp Ra$-complexes
are in $\catdfb(\Comp Ra)$, the maps from the previous paragraph are inverse bijections.
\end{proof}

\begin{cor}\label{thm140204az}
Assume that $R$ is $\fa$-adically complete.
The functor $\LLno a$ induces a bijection $\mathfrak{S}^\fa(R)\to\s(R)$ with inverse induced by
$\RGno a$.
\end{cor}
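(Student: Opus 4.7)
The plan is to derive this corollary directly from Theorem~\ref{thm140204a} by specializing to the case $R = \Comp Ra$. Since the hypothesis of $\fa$-adic completeness means that the canonical map $R \to \Comp Ra$ is an isomorphism of rings, we may identify $\Comp Ra$ with $R$ throughout, and under this identification $\fa \Comp Ra = \fa$. Consequently the forgetful functor $Q \colon \catd(\Comp Ra) \to \catd(R)$ becomes the identity, and on semidualizing classes we have $\s(\Comp Ra) = \s(R)$.

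Next I would observe the effect of this identification on the two functors in question. By the definitions given in Fact~\ref{fact130619b}, one has $Q \circ \LLLno a \simeq \LLno a$ and $Q \circ \RRGno a \simeq \RGno a$, so with $Q$ the identity the enhanced functors $\LLLno a$ and $\RRGno a$ coincide with $\LLno a$ and $\RGno a$ respectively. Moreover, when $R = \Comp Ra$ and $\fa\Comp Ra = \fa$, the functor $\RGano a$ agrees with $\RGno a$ in the obvious way, so the inverse bijection $Q \circ \RGano a \simeq \RGno a \circ Q$ of Theorem~\ref{thm140204a} collapses to simply $\RGno a$.

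With these identifications in place, Theorem~\ref{thm140204a} says precisely that $\LLno a \colon \mathfrak{S}^\fa(R) \to \s(R)$ is a bijection with inverse induced by $\RGno a$, which is the desired statement. The only point requiring any care is to make sure the identifications between the $R$-structure and the $\Comp Ra$-structure genuinely match the functors at the level of the derived category, but this is immediate from the definitions in Fact~\ref{fact130619b}; no serious obstacle is expected.
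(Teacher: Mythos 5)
Your proposal is correct and is exactly the intended argument: since the paper states this corollary immediately after Theorem~\ref{thm140204a} without a separate proof, it is meant as the specialization obtained by identifying $R$ with $\Comp Ra$ via the isomorphism coming from $\fa$-adic completeness, under which $Q$ becomes the identity, $\LLLno a$ becomes $\LLno a$, and $Q\circ\RGano a\simeq\RGno a\circ Q$ collapses to $\RGno a$. The paper's Remark~\ref{disc151204c} makes the same observation that completeness forces $\RRGno a=\RGno a$, so your reading matches the authors' own perspective.
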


\begin{disc}\label{disc151015a}
Assume that $\vf$ is flat  and that the induced map $\Comp Ra\to\comp S^{\fa S}$ is an isomorphism.
The following diagram displays the bijections described in 
Theorem~\ref{thm151012a}, Corollary~\ref{prop151013a}, and Theorem~\ref{thm140204a};
in it, each pair of arrows is an inverse pair, and each cell commutes (in every combination).
In a feeble attempt to keep the notation from getting out of hand, we use $Q$ for each of the forgetful functors.
$$\xymatrix@C=14mm{
\s^{\fa}(R)\ar@<0.5ex>[rrr]^-{\Lotimes S-}\ar@<0.5ex>[dd]^-{\LLLno a}\ar@<0.5ex>[rd]^>>>>>{\ \ \RRGno a=\Lotimes{\Comp Ra}-}
&&&\s^{\fa S}(S)\ar@<0.5ex>[lll]^{Q}
\ar@<-0.5ex>[dl]_>>>>>{\mathbf{R}\widehat{\Gamma}_{\fa S}=\Lotimes[S]{\compsa}{-}\ \ }
\ar@<-0.5ex>[dd]_-{\mathbf{L}\widehat{\Lambda}^{\fa S}} 
\\
&\s^{\fa\Comp Ra}(\Comp Ra)\ar@<0.5ex>[ul]^-{Q}\ar@<0.5ex>[ld]^<<<<<{\LLano a}\ar@<0.5ex>[r]^-{\Lotimes[\Comp Ra]{\compsa}-}
&\s^{\fa\compsa}(\compsa) \ar@<-0.5ex>[ur]_-{Q}\ar@<-0.5ex>[rd]_<<<<<{\mathbf{L}\Lambda^{\fa \compsa}\ \ }\ar@<0.5ex>[l]^-{Q}
\\
\s(\Comp Ra)\ar@<0.5ex>[uu]^>>>>>>>>>>>{Q\circ\RGano a}^<<<<<<<<<{=\RGno a\circ Q}\ar@<0.5ex>[ur]^-{\RGano a} 
\ar@<0.5ex>[rrr]^-{\Lotimes[\Comp Ra]{\compsa}-}
&&&\s(\compsa)\ar@<0.5ex>[lll]^-{Q}\ar@<-0.5ex>[uu]_>>>>>>>>>>>{Q\circ\mathbf{R}\Gamma_{\fa\compsa}}_<<<<<<<<<{=\mathbf{R}\Gamma_{\fa S}\circ Q}\ar@<-0.5ex>[ul]_-{\mathbf{R}\Gamma_{\fa\compsa}}
}$$
\end{disc}

We end this subsection with connections to tilting and dualizing complexes.

\begin{cor}\label{cor150525a}
Consider the forgetful functor
$Q\colon\catd(\Comp Ra)\to\catd(R)$.
Let $M\in\s^a(R)$ and $C\in\s(\Comp Ra)$ be given.
\begin{enumerate}[\rm(a)]
\item \label{cor150525a1}
We have $\fd_R(M)<\infty$ if and only if 
$\LLL aM$ is a tilting $\Comp Ra$-complex.
\item \label{cor150525a2}
We have $\id_R(M)<\infty$ if and only if 
$\LLL aM$ is a dualizing $\Comp Ra$-complex.
\item \label{cor150525a3}
The complex $C$ is tilting over $\Comp Ra$
if and only if 
$\fd_R(Q(\RGa aC))<\infty$.
\item \label{cor150525a4}
The complex $C$ is dualizing over $\Comp Ra$
if and only if 
$\id_R(Q(\RGa aC))<\infty$.
\end{enumerate}
\end{cor}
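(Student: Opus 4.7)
The plan is to prove parts~\eqref{cor150525a1} and~\eqref{cor150525a2} first, then deduce parts~\eqref{cor150525a3} and~\eqref{cor150525a4} via Theorem~\ref{thm140204a}. Given $M\in\s^\fa(R)$, Corollary~\ref{cor151013b}\eqref{cor151013b1} gives $\fd_R(M)=\fd_{\Comp Ra}(\RRG aM)$ and $\id_R(M)=\id_{\Comp Ra}(\RRG aM)$. Since $\supp_R(M)\subseteq\VE(\fa)$ yields $M\simeq\RG aM$, the computations in the proofs of Lemma~\ref{lem151004a} and Proposition~\ref{prop150619b} produce an isomorphism $\RRG aM\simeq\RGano a(\LLL aM)$ in $\catd(\Comp Ra)$. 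Writing $C:=\LLL aM\in\s(\Comp Ra)$, parts~\eqref{cor150525a1} and~\eqref{cor150525a2} reduce to the equivalences $\fd_{\Comp Ra}C<\infty \Leftrightarrow \fd_{\Comp Ra}(\RGano aC)<\infty$ and $\id_{\Comp Ra}C<\infty \Leftrightarrow \id_{\Comp Ra}(\RGano aC)<\infty$.

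The forward implications are routine. Combining $\RGano aC\simeq\Lotimes[\Comp Ra]{\RGano a\Comp Ra}{C}$ with the finite flat dimension of $\RGano a\Comp Ra$ (its \v{C}ech representative is a bounded complex of flat $\Comp Ra$-modules) yields the forward direction for flat dimension. For injective dimension, a bounded injective resolution $I\xra\simeq C$ over $\Comp Ra$ produces the bounded complex $\Gamma_{\fa\Comp Ra}(I)\xra\simeq\RGano aC$ of $\fa\Comp Ra$-torsion injective $\Comp Ra$-modules, giving $\id_{\Comp Ra}(\RGano aC)\leq\id_{\Comp Ra}C$.

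For the backward implications, I would localize at each $\m\in\mspec(\Comp Ra)$. By Lemma~\ref{lem151205a}\eqref{lem151205a1}, every such $\m$ contains $\fa\Comp Ra$, so $k(\m)$ is $\fa\Comp Ra$-torsion and $\RGano ak(\m)\simeq k(\m)$. Combining associativity of $\Lotimes{-}{-}$ with the derived adjointness of $\RGano a$ and the inclusion of the torsion subcategory yields the key isomorphisms
\begin{align*}
\Lotimes[\Comp Ra]{k(\m)}{\RGano aC}&\simeq\Lotimes[\Comp Ra]{k(\m)}{C},\\
\Rhom[\Comp Ra]{k(\m)}{\RGano aC}&\simeq\Rhom[\Comp Ra]{k(\m)}{C}.
\end{align*}
Localizing at $\m$ and invoking the standard local characterizations $\pd_T X=\sup\{i:\tor_i^T(k,X)\neq 0\}$ and $\id_T X=\sup\{i:\ext^i_T(k,X)\neq 0\}$ for a local noetherian ring $(T,\n,k)$ and $X\in\catdfb(T)$, finiteness of $\fd_{\Comp Ra}(\RGano aC)$ forces $\pd_{(\Comp Ra)_\m}C_\m<\infty$ for every maximal $\m$; since $C\in\catdfb(\Comp Ra)$, this gives $\fd_{\Comp Ra}C<\infty$. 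The injective-dimension case is parallel, using Bass numbers in place of Betti numbers.

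Finally, for parts~\eqref{cor150525a3} and~\eqref{cor150525a4}, given $C\in\s(\Comp Ra)$, set $M:=Q(\RGano aC)$; by Theorem~\ref{thm140204a}, $M\in\s^\fa(R)$ and $\LLL aM\simeq C$, so the desired equivalences follow by applying parts~\eqref{cor150525a1} and~\eqref{cor150525a2} to this $M$. The main obstacle is the backward implication of the flat- and injective-dimension equivalences: it requires careful bookkeeping to combine the $\RGano a$-invariance of $\tor$ and $\ext$ against $k(\m)$, the passage to $(\Comp Ra)_\m$, and the local homological-dimension criteria in order to transfer finiteness from $\RGano aC$ back to $C$ itself.
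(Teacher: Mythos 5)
Your proof is correct, but it takes a genuinely different route from the paper's. You route through Corollary~\ref{cor151013b} to get the equality $\fd_R(M)=\fd_{\Comp Ra}(\RRG aM)$ (respectively for $\id$), then reduce the whole statement to the equivalence $\fd_{\Comp Ra}(C)<\infty\iff\fd_{\Comp Ra}(\RGano aC)<\infty$ for $C\in\catdfb(\Comp Ra)$ (and the $\id$ analogue), which you verify directly over $\Comp Ra$ using the bounded flat \v{C}ech model of $\RGano a\Comp Ra$ for the easy direction, and the torsion-invariance of $\Lotimes[\Comp Ra]{k(\m)}{-}$ and $\Rhom[\Comp Ra]{k(\m)}{-}$ against $\RGano a$ (valid since every $\m\in\mspec(\Comp Ra)$ contains $\fa\Comp Ra$) plus the local criteria for $\pd$ and $\id$ on $\catdfb$ for the harder direction. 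The paper instead avoids the derived-torsion-invariance argument entirely: it establishes two one-way inequalities, namely $\fd_{\Comp Ra}(\LLL aM)\leq\fd_R(M)$ from \cite[Proposition~5.3(b)]{sather:elclh} and $\fd_R(Q(\RGa aC))\leq\fd_R(\RG aR)+\pd_{\Comp Ra}(C)<\infty$ via flat base change along $R\to\Comp Ra$, and then leverages the bijection of Theorem~\ref{thm140204a} (which sends $M$ to $\LLL aM$ with inverse $Q(\RGano a(-))$) to close the loop and obtain all four equivalences simultaneously. The paper's argument is shorter because the bijection does the ``converse'' work for free; your argument is more self-contained over $\Comp Ra$ but requires both the Tor/Ext comparison at residue fields and the observation that finiteness of $\fd$ of $\RGano aC$ localizes correctly even though $\RGano aC\notin\catdfb(\Comp Ra)$. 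One small bookkeeping point worth flagging in your write-up: the invariance isomorphisms only hold for primes in $\VE(\fa\Comp Ra)$; you correctly restrict to maximal ideals, where Lemma~\ref{lem151205a}\eqref{lem151205a1} guarantees containment, and this is enough because $\pd$ and $\id$ of a complex in $\catdfb$ are detected at maximal ideals.
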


\begin{proof}
By Theorem~\ref{thm140204a}, the complex $\LLL aM$ 
is semidualizing over $\Comp Ra$ such that $M\simeq Q(\RGa a{\LLL aM})\simeq\RG a{Q(\LLL aM)}$,
and  the complex $Q(\RGa aC)\simeq\RG a{Q(C)}$ 
is $\fa$-adically semidualizing over $R$ such that $C\simeq \LLL a{Q(\RGa aC)}$.

In this paragraph, we assume  that $\fd_R(M)<\infty$ and show that $\pd_{\Comp Ra}(\LLL aM)<\infty$.
From~\cite[Proposition~5.3(b)]{sather:elclh}, we have
$\fd_{\Comp Ra}(\LLL aM)\leq\fd_R(M)<\infty$. 
Since $M$ is $\fa$-adically finite, the complex
$\LLL aM$ is homologically finite over $\Comp Ra$, and it follows that $\pd_{\Comp Ra}(\LLL aM)<\infty$, as desired.

Next, we assume that $\pd_{\Comp Ra}(C)<\infty$ and show that $\fd_R(Q(\RGa aC))<\infty$. 
Since $\Comp Ra$ is flat over $R$, we have $\fd_R(Q(C))\leq\pd_{\Comp Ra}(C)<\infty$.
Thus, the following $R$-complexes have finite flat dimension over $R$
$$Q(\RGa aC)\simeq \RG a{Q(C)}\simeq\Lotimes{\RG aR}Q(C)$$
since $\RG aR$ and $Q(C)$ both have finite flat dimension.

As in the proof of Corollary~\ref{cor151013a}, 
parts~\eqref{cor150525a1} and~\eqref{cor150525a3} now follow. 
Parts~\eqref{cor150525a2} and~\eqref{cor150525a4} are verified similarly.
\end{proof}

\begin{cor}\label{cor150525az}
Assume that $R$ is $\fa$-adically complete, and let $M\in\s^a(R)$ and $C\in\s(R)$ be given.
\begin{enumerate}[\rm(a)]
\item \label{cor150525az1}
We have $\fd_R(M)<\infty$ if and only if 
$\LL aM$ is a tilting $R$-complex.
\item \label{cor150525az2}
We have $\id_R(M)<\infty$ if and only if 
$\LL aM$ is a dualizing $R$-complex.
\item \label{cor150525az3}
The complex $C$ is tilting over $R$
if and only if 
$\fd_R(\RG aC)<\infty$.
\item \label{cor150525az4}
The complex $C$ is dualizing over $R$
if and only if 
$\id_R(\RG aC)<\infty$.
\end{enumerate}
\end{cor}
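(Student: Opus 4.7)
The plan is to derive Corollary~\ref{cor150525az} as an immediate specialization of Corollary~\ref{cor150525a} under the $\fa$-adic completeness hypothesis. First I would record the (standard) observation that $R$ being $\fa$-adically complete means the natural map $R \to \Comp Ra$ is an isomorphism. Under this identification, the forgetful functor $Q\colon\catd(\Comp Ra)\to\catd(R)$ is the identity, so Fact~\ref{fact130619b} gives natural isomorphisms $\LLL aX\simeq\LL aX$, $\RRG aX\simeq\RG aX$, and $\RGa aX\simeq\RG aX$ for every $X\in\catd(R)$. Moreover, the classes $\s(R)$ and $\s(\Comp Ra)$ coincide, as do the notions of tilting and dualizing over $R$ and over $\Comp Ra$.

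With these identifications in hand, each of parts \eqref{cor150525az1}--\eqref{cor150525az4} is just a translation of the corresponding part of Corollary~\ref{cor150525a}. Specifically, parts \eqref{cor150525az1} and \eqref{cor150525az2} follow by rewriting ``$\LLL aM$ is tilting (resp.\ dualizing) over $\Comp Ra$'' as ``$\LL aM$ is tilting (resp.\ dualizing) over $R$''. Parts \eqref{cor150525az3} and \eqref{cor150525az4} follow by rewriting $\fd_R(Q(\RGa aC))$ (resp.\ $\id_R(Q(\RGa aC))$) as $\fd_R(\RG aC)$ (resp.\ $\id_R(\RG aC)$), using that $Q$ is the identity and $\RGa a\simeq\RG a$ under $R=\Comp Ra$.

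There is no real obstacle here beyond bookkeeping; the only point worth a sentence of care is that the functorial isomorphisms $\LLLno a\simeq\LLno a$ and $\RRGno a\simeq\RGno a$ supplied by Fact~\ref{fact130619b} are compatible with the identification $R\cong\Comp Ra$, so that the semidualizing/tilting/dualizing conditions genuinely transfer between the two categories without change. Once this is noted, the four equivalences are immediate.
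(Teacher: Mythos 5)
Your proposal is correct and matches the paper's intent: the paper states Corollary~\ref{cor150525az} with no proof at all, precisely because it is the specialization of Corollary~\ref{cor150525a} under $R\cong\Comp Ra$, exactly as you describe. Your observations that $Q$ becomes the identity and that $\LLLno a\simeq\LLno a$, $\RGano a\simeq\RGno a$ under completeness are the same identifications the paper uses elsewhere for such specializations (cf.\ Remark~\ref{disc151204c}).
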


\subsection*{Semidualizing DG $K$-Modules}
Given an $\fa$-adic semidualizing $R$-complex $M$, 
a crucial point in Theorem~\ref{thm140204a} is that we have $\LLL aM\in\catdfb(R)$.
By Fact~\ref{thm130612a}, we also have $\Lotimes KM\in\catdfb(R)$;
when translated to the language of ``DG $K$-modules'', this says $\Lotimes KM\in\catdfb(R)$.
In short, this means that, when one considers the exterior algebra structure on $K$,
the complex $\Lotimes KM\simeq\Otimes KM$ inherits a $K$-module structure from the left;
the DG structure means that this scalar multiplication respects the differentials in these complexes. 

In this setting, one forms the derived category $\catd(R)$ from the category of DG $K$-modules
like one forms $\catd(R)$ from the category of $R$-complexes. 
A DG $K$-module $N$ is in $\catdfb(K)$ provided that $\bigoplus_{i\in\bbz}\HH_i(N)$ is finitely generated over $R$,
that is, when $N$ is in $\catdfb(R)$. And $N$ is a \emph{semidualizing} DG $K$-module when it is in $\catdfb(K)$
and the natural homothety morphism $K\to\Rhom[K] NN$ is an isomorphism in $\catd(K)$. 
The set of shift-isomorphism classes of semidualizing DG $K$-modules is denoted $\s(K)$.
See~\cite{christensen:dvke, nasseh:lrfsdc, nasseh:lrec3, nasseh:ldgm} for more about these objects, 
including applications to the study of $\s(R)$. 

We now reach the point of this discussion: in the same way that the condition $\LLL aM\in\catdfb(\Comp Ra)$ makes it reasonable for us to have
$\LLL aM\in\s(\Comp Ra)$, the condition $\Lotimes KM\in\catdfb(K)$ makes it reasonable for us to have $\Lotimes KM\in\s(K)$, as we see in the next result.

\begin{cor}\label{cor151102a}
The functor $\Lotimes K-$ induces a bijection
$\s^\fa(R)\to\s(K)$.
\end{cor}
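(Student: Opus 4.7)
The plan is to verify directly that $\Lotimes K-$ carries $\s^{\fa}(R)$ into $\s(K)$, and then to establish bijectivity by comparing with the bijection $\LLLno a\colon\s^{\fa}(R)\to\s(\Comp Ra)$ of Theorem~\ref{thm140204a}.

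For $M\in\s^{\fa}(R)$, the condition $\Lotimes KM\in\catdfb(K)$ is immediate from Fact~\ref{thm130612a} together with the definition of $\catdfb(K)$ recalled just above (``$N\in\catdfb(K)$'' means $\bigoplus\HH_i(N)$ is finitely generated over $R$). For the homothety condition, I would compute in $\catd(K)$:
$$\Rhom[K]{\Lotimes KM}{\Lotimes KM}
\simeq\Rhom{M}{\Lotimes KM}
\simeq\Lotimes K{\Rhom MM}
\simeq\Lotimes K{\Comp Ra}
\simeq K,$$
using, in order, the adjunction between $\Lotimes K-$ and restriction, tensor-evaluation (valid because $K$ is a bounded complex of finitely generated free $R$-modules), the identity $\Rhom MM\simeq\Comp Ra$ from Theorem~\ref{lem151005a}, and the quasiisomorphism $K\to\Lotimes K{\Comp Ra}$, which holds because $\fa$ annihilates $\HH_*(K)$. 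As flagged in the discussion of Theorem~\ref{lem151005a}, an arbitrary isomorphism is not automatically the homothety one; however, each displayed isomorphism is natural in $M$ and intertwines the relevant homotheties on its two sides, so chasing $\chi^K_K\colon K\to\Rhom[K]KK$ through the chain shows that $\chi^K_{\Lotimes KM}$ is itself an isomorphism, giving $\Lotimes KM\in\s(K)$.

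For bijectivity, I would argue that $\s(K)$ is likewise in natural bijection with $\s(\Comp Ra)$, and that this bijection is compatible with $\Lotimes K-$. Given $N\in\s(K)$, the restriction $Q(N)\in\catdfb(R)$ has $\HH_*(Q(N))$ $\fa$-torsion (as an $\HH_*(K)$-module), so $\supp_R(Q(N))\subseteq\VE(\fa)$ by Fact~\ref{cor130528a}; hence $Q(N)$ is $\fa$-adically finite and $\LLL a{Q(N)}\in\catdfb(\Comp Ra)$. A computation analogous to the one above, together with an analog of Lemma~\ref{lem151004a} adapted to the DG $K$-setting, shows $\LLL a{Q(N)}\in\s(\Comp Ra)$; the candidate inverse to $\Lotimes K-$ is then the composite of $N\mapsto\LLL a{Q(N)}$ with $\LLLno a^{-1}$ from Theorem~\ref{thm140204a}. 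Verifying two-sided inversion reduces to the comparisons $\LLL a{Q(\Lotimes KM)}\simeq\LLL aM$ for $M\in\s^{\fa}(R)$ and $\Lotimes K{Q(M')}\simeq N$ in $\catd(K)$ for $M'=\LLLno a^{-1}(\LLL a{Q(N)})$, both of which follow from Fact~\ref{fact130619b} and the absorption $\Lotimes K{\Comp Ra}\simeq K$.

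The main obstacle is constructing the inverse cleanly: one needs a $K$-analog of Lemma~\ref{lem151004a}---translating between the $K$-module structure on $N\in\s(K)$ and the $\Comp Ra$-structure on $\LLL a{Q(N)}$---so that $\LLL a{Q(-)}$ and $\Lotimes K-$ are mutually inverse on shift-isomorphism classes. The homothety-transport step in the forward direction is a routine (though genuine) bookkeeping matter, handled exactly as in Lemma~\ref{lem151004a}\eqref{lem151004a3}.
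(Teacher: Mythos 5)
Your forward direction---showing $\Lotimes K-$ carries $\s^{\fa}(R)$ into $\s(K)$ via adjunction, tensor-evaluation, and the quasiisomorphism $K\to\Lotimes K{\Comp Ra}$---is sound and is in the spirit of the paper's argument. However, your proposed inverse is incorrect, and this is not a routine bookkeeping gap: it fails already at the level of well-definedness.

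Concretely, you propose sending $N\in\s(K)$ to $\LLLno a^{-1}(\LLL a{Q(N)})$, which requires $\LLL a{Q(N)}$ to be semidualizing over $\Comp Ra$. This is false in general. Take $N=K$ itself, the trivial element of $\s(K)$. Then $Q(N)=K$ as an $R$-complex, and $\LLL a{K}\simeq\Otimes{\Comp Ra}K$ is the Koszul complex $\comp K$ over $\Comp Ra$. One has $\Rhom[\Comp Ra]{\comp K}{\comp K}\simeq\Lotimes[\Comp Ra]{\comp K}{\Hom[\Comp Ra]{\comp K}{\Comp Ra}}$, which is not $\Comp Ra$ unless $\fa=0$; for instance, if $R=k[x]$ and $\fa=(x)$, this complex computes $\ext^*_{k[[x]]}(k,k)$, not $k[[x]]$. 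So $\LLL a{Q(N)}\notin\s(\Comp Ra)$, and your candidate inverse map is undefined on this element. Relatedly, even when the first step went through, your claimed comparison $\Lotimes K{M'}\simeq N$ would reduce to $\Lotimes K{Q(N)}\simeq N$ in $\catd(K)$, which is likewise false: the multiplication map $\Otimes K{Q(N)}\to N$ is not generally a quasiisomorphism (again $N=K$ gives a counterexample, since $K\otimes_RK$ has strictly larger homology than $K$).

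The actual preimage of $K\in\s(K)$ is $\RG aR$, not anything obtainable from $\LLL a{Q(K)}$. The paper's proof handles surjectivity in a fundamentally different way: it factors the map as $\s^\fa(R)\to\s(\Comp Ra)\to\s(\comp K)\to\s(K)$ where $\comp K:=\Lotimes{\Comp Ra}K$, identifies the first arrow with $\LLLno a$ from Theorem~\ref{thm140204a}, and invokes the lifting theorem \cite[Corollary~3.10]{nasseh:ldgm} for the middle bijection $\s(\Comp Ra)\to\s(\comp K)$. That lifting result is nonconstructive, and Remark~\ref{disc151102a} warns explicitly that no functorial description of the inverse of this bijection is known. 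Your construction would in fact supply such a functorial inverse, which should have signalled that something was off. The missing ingredient in your proposal is precisely this lifting theorem; without it, surjectivity of $\Lotimes K-$ does not follow from any of the machinery assembled in the paper.
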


\begin{proof}
Set $\comp K:=\Lotimes{\Comp Ra}K$, which is the Koszul complex over $\Comp Ra$ on a finite generating sequence for $\fa\Comp Ra$. 

Theorem~\ref{thm140204a}, implies that the functor $\LLL a-\colon\catd(R)\to\catd(\Comp Ra)$ induces a bijection
$\s^\fa(R)\to\s(\Comp Ra)$.
From~\cite[Corollary~3.10]{nasseh:ldgm}, we know that the functor $\Lotimes[\Comp Ra]{\comp K}-\colon\catd(\Comp Ra)\to\catd(\comp K)$ induces a bijection
$\s(\Comp Ra)\to\s(\comp K)$.
Also, since the natural map $K\to\comp K$ is a quasiisomorphism of DG algebras, the forgetful functor
$Q\colon\catd(\comp K)\to\catd(K)$ induces a bijection $\s(\comp K)\to\s(K)$. 
Thus, it remains to show that the composition of these bijections 
$\s^\fa(R)\to\s(\Comp Ra)\to\s(\comp K)\to\s(K)$ is given by $\Lotimes K-$.

Let $M\in\s^\fa(R)$.
We need to show that $Q(\Lotimes[\Comp Ra]{\comp K}{\LLL aM})\simeq\Lotimes KM$ in $\catd(K)$. 
This is accomplished in the next sequence, wherein
$Q'\colon\catd(\Comp Ra)\to\catd(R)$ denotes the forgetful functor. 
\begin{align*}
Q(\Lotimes[\Comp Ra]{\comp K}{\LLL aM})
&\simeq Q(\Lotimes[\Comp Ra]{(\Lotimes K{\Comp Ra})}{\LLL aM})\\
&\simeq\Lotimes K{Q'(\LLL aM)} \\
&\simeq\Lotimes K{\LL aM}\\
&\simeq\Lotimes KM
\end{align*}
The first two isomorphisms here are straightforward, and
the third one is from Fact~\ref{fact130619b}.
For the fourth one, note that~\cite[Lemma~2.8]{sather:elclh} shows that the natural morphism $\Lotimes KM\to\Lotimes K{\LL aM}$ is an isomorphism
in $\catd(R)$. Since it is also a morphism in $\catd(K)$, it is also an isomorphism in $\catd(K)$, as desired.
\end{proof}

\begin{disc}\label{disc151102a}
Unlike in our previous results, it is not clear how to give a  functorial description of the inverse of the 
bijection $\s^\fa(R)\to\s(K)$ from Corollary~\ref{cor151102a}.
The problem is that~\cite[Corollary~3.10]{nasseh:ldgm} uses a lifting property to show that the map $\s(\Comp Ra)\to\s(\comp K)$ is bijective,
but it does not give a functorial description of  the inverse of this map, nor is it clear that such a description exists.
\end{disc}

\section{Dualizing Complexes and Gorenstein Rings}\label{sec151204a}

We begin this section by proving Theorem~\ref{thm151203a} from the introduction.

\begin{thm}\label{cor150525axx}
\begin{enumerate}[\rm(a)]
\item \label{cor150525axx3}
The ring $\Comp Ra$ has a dualizing complex if and only if $R$ has an $\fa$-adically semidualizing complex of finite injective dimension.
\item \label{cor150525axx1}
If $(R,\m,k)$ is local, then the $\Comp Rm$-complex $\LLL m{E_R(k)}$ is dualizing for $\Comp Rm$.
\end{enumerate}
\end{thm}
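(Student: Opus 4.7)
The plan is to derive both parts directly from the machinery already assembled, since the hard work has been packaged into the bijection of Theorem~\ref{thm140204a} and the injective-dimension characterization of Corollary~\ref{cor150525a}\eqref{cor150525a2}.

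For part~\eqref{cor150525axx3}, I would argue as follows. For the forward implication, suppose $\Comp Ra$ admits a dualizing complex $D$. Then $D\in\s(\Comp Ra)$, so under the bijection $\mathfrak{S}^\fa(R)\to\s(\Comp Ra)$ of Theorem~\ref{thm140204a}, set $M:=Q(\RGa aD)$. By that theorem, $M\in\s^\fa(R)$ and $\LLL aM\simeq D$. Since $\LLL aM$ is a dualizing $\Comp Ra$-complex, Corollary~\ref{cor150525a}\eqref{cor150525a2} gives $\id_R(M)<\infty$, as required. Conversely, given $M\in\s^\fa(R)$ with $\id_R(M)<\infty$, Corollary~\ref{cor150525a}\eqref{cor150525a2} immediately tells us that $\LLL aM\in\s(\Comp Ra)$ is dualizing over $\Comp Ra$.

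For part~\eqref{cor150525axx1}, I would specialize part~\eqref{cor150525axx3} (or rather its proof) to the case $\fa=\m$ and $M=E_R(k)$. Proposition~\ref{prop120925b}\eqref{prop120925b3} says $E_R(k)\in\s^{\m}(R)$, and clearly $\id_R(E_R(k))=0<\infty$ since $E_R(k)$ is injective. Corollary~\ref{cor150525a}\eqref{cor150525a2} then delivers the conclusion that $\LLL m{E_R(k)}$ is a dualizing complex for $\Comp Rm$.

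This also recovers Theorem~\ref{thm151203a} from the introduction: if $F\xra\simeq E_R(k)$ is a flat resolution, then by definition $\LL m{E_R(k)}\simeq\Lambda^{\m}(F)=\Comp Fm$, and via the forgetful identification $Q\circ\LLLno m\simeq\LLno m$ of Fact~\ref{fact130619b}, the $\Comp Rm$-complex $\LLL m{E_R(k)}$ from part~\eqref{cor150525axx1} is represented by $\Comp Fm$.

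I do not anticipate a substantive obstacle here. The entire content is absorbed into the earlier bijection and dimension-transfer result, so the proof amounts to invoking them in the right order. The only minor care needed is to verify that the $\Comp Ra$-complex produced in the reverse direction of~\eqref{cor150525axx3} is indeed the one to which Corollary~\ref{cor150525a}\eqref{cor150525a2} applies (i.e., that $\LLL aM\simeq D$ on the nose), which is exactly the content of the inverse-functor statement in Theorem~\ref{thm140204a}.
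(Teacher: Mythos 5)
Your proof is correct and follows essentially the same route as the paper: reduce part~(a) to the bijection of Theorem~\ref{thm140204a} together with Corollary~\ref{cor150525a}, and obtain part~(b) as the special case $M=E_R(k)$ via Proposition~\ref{prop120925b}\eqref{prop120925b3}. The only cosmetic difference is that the paper invokes Corollary~\ref{cor150525a}\eqref{cor150525a4} directly for the forward direction of~(a), whereas you route through~\eqref{cor150525a2} after identifying $\LLL aM\simeq D$; both are immediate from the same machinery.
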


\begin{proof}
\eqref{cor150525axx3}
For one implication, if $\Comp Ra$ has a dualizing complex $D$, then
Theorem~\ref{thm140204a} and Corollary~\ref{cor150525a}\eqref{cor150525a4} imply that $M:=Q(\RGa aD)$ is an $\fa$-adically semidualizing $R$-complex
of finite injective dimension. 
Conversely, if $M$ is an $\fa$-adically semidualizing $R$-complex with $\id_R(M)<\infty$,
then Corollary~\ref{cor150525a}\eqref{cor150525a2} implies that the complex $\LLL aM$ is dualizing over $\Comp Ra$.

\eqref{cor150525axx1}
When $(R,\m,k)$ is local, the injective hull $E:=E_R(k)$ is $\m$-adically semidualizing by Proposition~\ref{prop120925b}.
Since it also has finite injective dimension over $R$, the desired conclusion follows from 
Corollary~\ref{cor150525a}\eqref{cor150525a2} as in the previous paragraph.

Alternately, one can prove this using Grothendieck's local duality, appropriately extended. Indeed,
in the following display, the first isomorphism in $\catd(\Comp Rm)$ is from Fact~\ref{fact130619b},
and the second one is from~\cite[Lemma~1.5(a)]{kubik:hamm1}.
\begin{align*}
\LLL mE
&\simeq\Rhom{\RRG mR}{E} \\
&\simeq\Rhom[\Comp Rm]{\RRG mR}{E}\\
&\simeq\Rhom[\Comp Rm]{\RGa m{\Comp Rm}}{E}
\end{align*}
The third isomorphism is from~\cite[Lemmas~4.4--4.5]{sather:elclh}.
Now,  local duality over $\Comp Rm$ allows us to conclude that the last complex in this display is dualizing for $\Comp Rm$.
\end{proof}

\begin{cor}\label{cor150525axxz}
\begin{enumerate}[\rm(a)]
\item \label{cor150525axxz3}
An $\fa$-adically complete ring has a dualizing complex if and only if it has an $\fa$-adically semidualizing complex of finite injective dimension.
\item \label{cor150525axxz1}
If $(R,\m,k)$ is local and $\m$-adically complete, then the $R$-complex $\LL m{E_R(k)}$ is dualizing for $R$.
\end{enumerate}
\end{cor}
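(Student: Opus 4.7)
The plan is to deduce Corollary~\ref{cor150525axxz} directly as the specialization of Theorem~\ref{cor150525axx} to the case where $R$ itself is $\fa$-adically complete. Under that hypothesis the natural map $R \to \Comp Ra$ is an isomorphism, so I identify $\Comp Ra$ with $R$ throughout, and the forgetful functor $Q \colon \catd(\Comp Ra) \to \catd(R)$ becomes an equivalence (essentially the identity).

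For part~\eqref{cor150525axxz3}, Theorem~\ref{cor150525axx}\eqref{cor150525axx3} reads, after this identification, exactly as the desired biconditional; no further work is required, since both the dualizing property and the property of admitting an $\fa$-adically semidualizing complex of finite injective dimension are invariant under the identification $\Comp Ra = R$. For part~\eqref{cor150525axxz1}, I would combine Theorem~\ref{cor150525axx}\eqref{cor150525axx1}, which asserts that $\LLL m{E_R(k)}$ is dualizing over $\Comp Rm$, with the natural isomorphism $Q \circ \LLLno m \simeq \LLno m$ from Fact~\ref{fact130619b}. Under the identification $\Comp Rm = R$ the functor $Q$ acts as the identity up to equivalence, so $\LLL m{E_R(k)}$ and $\LL m{E_R(k)}$ agree as $R$-complexes, and the former being dualizing over $R = \Comp Rm$ gives exactly the claim for $\LL m{E_R(k)}$.

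Because the corollary is a direct translation of the already-established Theorem~\ref{cor150525axx} via the completeness hypothesis, there is no genuine obstacle. The only point requiring any care is the consistent bookkeeping between $\LLLno m$ and $\LLno m$ through the forgetful functor $Q$, and this is immediate once one observes that $Q$ is an equivalence of categories when $R = \Comp Ra$.
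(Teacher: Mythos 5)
Your proposal is correct and matches the paper's (implicit) reasoning: the paper gives no proof for this corollary, treating it as an immediate specialization of Theorem~\ref{cor150525axx} under the identification $\Comp Ra \cong R$, which is exactly the reduction you carry out, including the bookkeeping that $Q\circ\LLLno m \simeq \LLno m$ collapses to the identity when $R$ is complete.
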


\begin{disc}\label{disc151012b}
It is important to note that Theorem~\ref{cor150525axx}\eqref{cor150525axx3} cannot be used to construct dualizing complexes
for rings that don't have them, obviously. The point is that the condition of $R$ having an $\fa$-adic semidualizing complex of finite injective dimension
can be quite restrictive, in general. 

Our alternate proof of Theorem~\ref{cor150525axx}\eqref{cor150525axx1} 
uses the fact that $\Comp Rm$ has a dualizing complex, since that is part of local duality. On the other hand,
the first proof we give for this result does not use this fact, so it gives a new proof of the
existence of a dualizing complex
for $\Comp Rm$.

Also, from Fact~\ref{fact130619b} we have the next isomorphism in $\catd(\Comp Rm)$
$$\LLL mE\simeq\Rhom{\Comp Rm}{\LL mE}$$
so this gives another strange description of a dualizing complex for $\Comp Rm$.

This result also shows a stark distinction between the functors
$\LLLno a$ and $\Lotimes{\Comp Ra}-$. Indeed, the complex
$\LLL m{E}$ is dualizing for $\Comp Rm$; in particular, it is homologically finite over $\Comp Rm$.
On the other hand, we have $\Lotimes{\Comp Rm}{E}\simeq E_{\Comp Rm}(k)$, which is only homologically finite over $\Comp Rm$
if $R$ is artinian. Even when $R$ is $\m$-adically complete, this shows how strange the functor $\LLno a$ is,
for instance, since $E$ is a module, but $\LL m{E}$ is a dualizing complex for $R$, by
Corollary~\ref{cor150525axxz}\eqref{cor150525axxz1}
\end{disc}

We now turn our attention to a uniqueness result for  Gorenstein rings. 
The next result and its corollary should be compared to~\cite[Corollary~8.6]{christensen:scatac},
which says that the  semidualizing complexes over a local Gorenstein ring $R$ are exactly the complexes of the form
$\shift^nR$ for some $n$. 

\begin{cor}\label{cor130723b}
Assume that $\Comp Ra$ is locally Gorenstein, e.g., that $R$ is  locally Gorenstein.
Consider the forgetful functor
$Q\colon\catd(\Comp Ra)\to\catd(R)$. 
\begin{enumerate}[\rm(a)]
\item\label{cor130723b1}
The $\fa$-adically semidualizing $R$-complexes
are precisely the $R$-complexes of the form $Q(\RGa aL)$ for some tilting $\Comp Ra$-complex $L$.
\item\label{cor130723b2}
Assume that $(R,\m,k)$ is local. Then the $\fa$-adically semidualizing $R$-complexes
are precisely the $R$-complexes of the form $\shift^n\RG aR$ for some $n\in\bbz$.
In particular, the $\m$-adically semidualizing complexes 
are precisely the $R$-complexes of the form $\shift^n E_R(k)$ for some $n\in\bbz$.
\end{enumerate}
\end{cor}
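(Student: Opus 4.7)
The plan is to deduce both parts from the bijection $\LLLno a\colon\s^\fa(R)\to\s(\Comp Ra)$ of Theorem~\ref{thm140204a} (with inverse induced by $Q\circ\RGano a$), together with Christensen's classification \cite[Corollary~8.6]{christensen:scatac} that every semidualizing complex over a local Gorenstein ring $S$ is a shift of $S$.

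For part~\eqref{cor130723b1}, one direction is immediate: any tilting $L\in\s(\Comp Ra)$ is semidualizing, so Theorem~\ref{thm140204a} yields $Q(\RGa aL)\in\s^\fa(R)$. The converse reduces via the same bijection to showing that, when $\Comp Ra$ is locally Gorenstein, every semidualizing $\Comp Ra$-complex $L$ is tilting. I would argue this locally: the semidualizing property is local (as invoked already in the proofs of Lemma~\ref{lem151012b}\eqref{lem151012b2} and Theorem~\ref{cor151122a}), so for each $\p\in\spec(\Comp Ra)$ the localization $L_\p$ is semidualizing over the local Gorenstein ring $(\Comp Ra)_\p$, and \cite[Corollary~8.6]{christensen:scatac} gives $L_\p\simeq\shift^{n(\p)}(\Comp Ra)_\p$ for some $n(\p)\in\bbz$. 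Since $L\in\catdfb(\Comp Ra)$, the integers $n(\p)$ are uniformly bounded by the homological amplitude of $L$, and the standard local criterion for finite projective dimension of a homologically finite complex then yields $\pd_{\Comp Ra}L<\infty$, so $L$ is tilting.

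For part~\eqref{cor130723b2}, the local hypothesis on $R$ together with Lemma~\ref{lem151205a}\eqref{lem151205a1} gives that $\Comp Ra$ is local with maximal ideal $\m\Comp Ra$, and it is Gorenstein by hypothesis, so \cite[Corollary~8.6]{christensen:scatac} yields $\s(\Comp Ra)=\{[\shift^n\Comp Ra]:n\in\bbz\}$ directly. Transporting through the bijection of Theorem~\ref{thm140204a} gives $\s^\fa(R)=\{[\shift^n\RG a{\Comp Ra}]:n\in\bbz\}$, and the identification $\RG a{\Comp Ra}\simeq\RG aR$ follows from MGM equivalence (Fact~\ref{fact150626a}) applied to $\LL aR\simeq\Comp Ra$ (Fact~\ref{fact130619b}). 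For the ``in particular'' statement, when $\fa=\m$ and $R$ (equivalently $\Comp Rm$) is local Gorenstein of dimension $d$, the classical computation $\HH^i_\m(R)=0$ for $i\neq d$ together with $\HH^d_\m(R)\cong E_R(k)$ translates, in the paper's homological convention, to $\RG mR\simeq\shift^{-d}E_R(k)$, whence the shift classes of $\RG mR$ coincide with those of $E_R(k)$.

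The main obstacle is the local-to-global step in part~\eqref{cor130723b1}: converting the pointwise isomorphisms $L_\p\simeq\shift^{n(\p)}(\Comp Ra)_\p$ into globally finite projective dimension for $L$, which relies both on the locality of the semidualizing property and on the uniform bound on $n(\p)$ coming from the bounded amplitude of $L$.
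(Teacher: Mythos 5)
Your proposal is correct and takes essentially the same route as the paper: both parts are deduced from the bijection of Theorem~\ref{thm140204a} together with Christensen's classification of semidualizing complexes over local Gorenstein rings \cite[Corollary~8.6]{christensen:scatac}, and the local-to-global step you sketch in part~\eqref{cor130723b1} (uniform bound on the shifts $n(\p)$, locality of projective dimension for homologically finite complexes) is precisely the content of the paper's citation to \cite[Proposition~4.4]{frankild:rbsc}. The only point worth tightening is in part~\eqref{cor130723b2}, where the inverse of the Theorem~\ref{thm140204a} bijection is $Q\circ\RGano a$ rather than $\RGno a$ alone, so one should justify $Q(\RGa a{\Comp Ra})\simeq\RG aR$ via the chain $Q(\RGa a{\LLL aR})\simeq\RG a{Q(\LLL aR)}\simeq\RG a{\LL aR}\simeq\RG aR$ (using $Q\circ\RGano a\simeq\RGno a\circ Q$, Fact~\ref{fact130619b}, and MGM), as the paper does.
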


\begin{proof}
Lemma~\ref{lem151205a}\eqref{lem151205a4} shows that if $R$ is locally Gorenstein, then so is $\Comp Ra$.

\eqref{cor130723b1}
In view of Theorem~\ref{thm140204a}, it suffices to show that every semidualizing $\Comp Ra$-complex $C$ is tilting.
For each $\ideal P\in\spec(\Comp Ra)$,
the $\Comp Ra_{\ideal P}$ complex $C_{\ideal P}$ is semidualizing, hence it is isomorphic to $\shift^n\Comp Ra_{\ideal P}$
for some $n$ by~\cite[Corollary~8.6]{christensen:scatac}. It follows from~\cite[Proposition~4.4]{frankild:rbsc}
that $C$ is tilting over $\Comp Ra$, as desired.

\eqref{cor130723b2}
In the following sequence of isomorphisms in $\catd(R)$, the first isomorphism is from Fact~\ref{fact130619b},
and the second  one is from~\cite[Corollary~4.14]{sather:elclh}.
\begin{align*}
Q(\RGa a{\Comp Ra})
&\simeq Q(\RGa a{\LLL aR})\\
&\simeq \RG a{Q(\LLL aR)}\\
&\simeq \RG a{\LL aR}\\
&\simeq\RG aR
\end{align*}
The third isomorphism is from Fact~\ref{fact130619b}, 
and the last one is MGM equivalence~\ref{fact150626a}.

Since $R$ is local and Gorenstein, the same is true of $\Comp Ra$,
so~\cite[Corollary~8.6]{christensen:scatac} implies that the only semidualizing $\Comp Ra$-complex, up to isomorphism
and shift, is $\Comp Ra$. Thus, 
part~\eqref{cor130723b1}
and the previous paragraph show that the $\fa$-adically semidualizing $R$-complexes
are of the form $\shift^n\RG aR$.
In particular, for the ideal $\fa=\m$, 
the $\fa$-adically semidualizing $R$-complexes
are of the form $\shift^n\RG mR\simeq\shift^{n-d}E_R(k)$
where $d=\dim(R)$. (This uses the well-known structure of the minimal injective resolution of $R$.)
\end{proof}

\begin{cor}\label{cor130723bz}
Assume that $R$ is  locally Gorenstein
and $\fa$-adically complete. 
Then the $\fa$-adically semidualizing $R$-complexes
are precisely the $R$-complexes of the form $\RG aL$ for some tilting $R$-complex $L$.
\end{cor}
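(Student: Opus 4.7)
The plan is to derive this corollary as a direct specialization of Corollary~\ref{cor130723b}\eqref{cor130723b1} to the $\fa$-adically complete setting, where the completion functor becomes trivial.

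First, I would observe that the hypothesis that $R$ is $\fa$-adically complete means the natural map $R \to \Comp Ra$ is an isomorphism. Under this identification, the forgetful functor $Q\colon\catd(\Comp Ra)\to\catd(R)$ becomes (naturally equivalent to) the identity functor, and the functor $\RGano a$ becomes $\RGno a$. Moreover, the class of tilting $\Comp Ra$-complexes coincides with the class of tilting $R$-complexes, since tilting is a property defined entirely in terms of the derived category of the ring in question.

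Next, I would invoke Lemma~\ref{lem151205a}\eqref{lem151205a4} to note that the locally Gorenstein hypothesis passes from $R$ to $\Comp Ra$ (in our case, this is vacuous since $R = \Comp Ra$). This puts us in the situation of Corollary~\ref{cor130723b}\eqref{cor130723b1}, which asserts that the $\fa$-adically semidualizing $R$-complexes are precisely those of the form $Q(\RGa aL)$ for some tilting $\Comp Ra$-complex $L$.

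Finally, substituting the identifications from the first paragraph, every expression $Q(\RGa aL)$ reduces to $\RG aL$ for some tilting $R$-complex $L$, which yields exactly the claimed description. The only potential subtlety is confirming that the bijective correspondence in Corollary~\ref{cor130723b} genuinely hits every tilting $R$-complex $L$ once we identify $\Comp Ra$ with $R$, but this is automatic from the completeness hypothesis. There is no real obstacle here; the result is essentially a notational streamlining of the previous corollary in the complete case, analogous to the passage from Corollary~\ref{cor150525a} to Corollary~\ref{cor150525az} and from Theorem~\ref{thm140204a} to Corollary~\ref{thm140204az}.
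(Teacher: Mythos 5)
Your proposal is correct and is exactly the intended argument: the paper gives no separate proof for Corollary~\ref{cor130723bz}, presenting it as an immediate specialization of Corollary~\ref{cor130723b}\eqref{cor130723b1} under the identification $R\cong\Comp Ra$, just as Corollary~\ref{thm140204az} and Corollary~\ref{cor150525az} follow from Theorem~\ref{thm140204a} and Corollary~\ref{cor150525a}. Your accounting of how $Q$, $\RGano a$, and the tilting notion all collapse under completeness is the right (and complete) justification.
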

\begin{disc}
\label{disc151025a}
One can combine Theorem~\ref{thm140204a} with other results from the semidualizing literature to obtain
further results like Corollary~\ref{cor130723b}.
For instance, if $R$ is local, then we know from~\cite[Theorem~A]{nasseh:lrfsdc} that $\s(\Comp Ra)$ is finite, 
so we conclude that $\s^{\fa}(R)$ is finite as well. 
If $R$ is local and either is Golod or has embedding codepth at most 3, then~\cite[Theorem~A]{nasseh:lrec3} 
shows that $|\s(\Comp Ra)|\leq 2$, so we have
$|\s^{\fa}(R)|\leq 2$.
\end{disc}

Here is the example promised in~\cite[Example~6.4]{sather:elclh}.

\begin{ex}\label{ex150628a}
Let $(R,\m,k)$ be a local ring.
The injective hull $E:=E_R(k)$ is $\m$-adically semidualizing by Proposition~\ref{prop120925b}\eqref{prop120925b3}.
Suppose that there is an $R$-complex $N\in\catdfb(R)$ such that $E\simeq\RG mN$. 
We show that $N$ is dualizing for $R$.
It suffices to show that the $\Comp Rm$-complex $\Lotimes{\Comp Rm}N\simeq\LLL mN$ is dualizing for $\Comp Rm$;
see~\cite[(2.2)]{avramov:rhafgd} and Fact~\ref{fact130619b}.
From Theorem~\ref{cor150525axx}\eqref{cor150525axx1}, the $\Comp Rm$-complex $\LLL mE$ is dualizing for $\Comp Rm$,
so it suffices to show that $\LLL mN\simeq\LLL mE$.
To this end, we compute:
\begin{align*}
\LLL mE
&\simeq\LLL m{\RG mN}
\simeq\LLL mN.
\end{align*}
The assumption $E\simeq\RG mN$ explains the first isomorphism in this sequence,
and the second one is from~\cite[Lemma~4.1]{sather:elclh}.
\end{ex}

\section*{Acknowledgments}
We are grateful to Srikanth Iyengar, 
Liran Shaul,
and Amnon Yekutieli
for helpful comments about this work.

\providecommand{\bysame}{\leavevmode\hbox to3em{\hrulefill}\thinspace}
\providecommand{\MR}{\relax\ifhmode\unskip\space\fi MR }
\providecommand{\MRhref}[2]{%
  \href{http://www.ams.org/mathscinet-getitem?mr=#1}{#2}
}
\providecommand{\href}[2]{#2}

\end{document}